\documentclass[11pt,a4paper]{article}

\usepackage[T1]{fontenc}
\usepackage[utf8]{inputenc}
\usepackage{amsmath,amssymb,amsthm,mathtools}
\usepackage{newtxtext,newtxmath}
\usepackage[margin=25mm,headheight=14pt,headsep=7mm,footskip=11mm]{geometry}
\usepackage{microtype}
\usepackage{graphicx}
\usepackage{flafter}
\usepackage{needspace}
\usepackage{titling}
\usepackage{booktabs,tabularx,array}
\usepackage[font=small,labelfont=bf,labelsep=period]{caption}
\usepackage{subcaption}
\usepackage{authblk}
\makeatletter
\renewcommand{\AB@authnote}[1]{\protect\textsuperscript{#1}}
\renewcommand{\AB@affilnote}[1]{\protect\textsuperscript{#1}}
\makeatother
\usepackage{fancyhdr}
\usepackage{aliascnt}
\usepackage{cite}
\usepackage[hidelinks,bookmarksopen=true,bookmarksnumbered=true]{hyperref}
\usepackage[nameinlink,noabbrev]{cleveref}

\newtheorem{theorem}{Theorem}[section]
\newaliascnt{lemma}{theorem}
\newtheorem{lemma}[lemma]{Lemma}
\aliascntresetthe{lemma}
\newaliascnt{proposition}{theorem}
\newtheorem{proposition}[proposition]{Proposition}
\aliascntresetthe{proposition}
\newaliascnt{corollary}{theorem}
\newtheorem{corollary}[corollary]{Corollary}
\aliascntresetthe{corollary}
\theoremstyle{remark}
\newaliascnt{remark}{theorem}

\aliascntresetthe{remark}
\crefname{theorem}{Theorem}{Theorems}
\crefname{lemma}{Lemma}{Lemmas}
\crefname{proposition}{Proposition}{Propositions}
\crefname{corollary}{Corollary}{Corollaries}
\crefname{remark}{Remark}{Remarks}
\crefname{section}{Section}{Sections}
\crefname{subsection}{Section}{Sections}
\crefname{figure}{Figure}{Figures}
\crefname{table}{Table}{Tables}
\numberwithin{equation}{section}

\newcommand{\C}{\mathbb C}
\newcommand{\R}{\mathbb R}
\newcommand{\Cp}{\mathbb C_+}
\newcommand{\Cn}{\mathcal C_n}
\newcommand{\Un}{\mathsf U_n}
\newcommand{\AL}{A_{L,n}}
\newcommand{\ii}{\mathrm i}
\newcommand{\ee}{\mathrm e}
\DeclareMathOperator{\Arg}{Arg}
\DeclareMathOperator{\cl}{cl}
\DeclareMathOperator{\interior}{int}
\DeclareMathOperator{\Real}{Re}
\DeclareMathOperator{\Imag}{Im}

\title{\bfseries\boldmath On the Spectral Region of $n$-Cycle\protect\\ Stochastic Matrices}
\author[1,2]{Brecht Verbeken}
\author[1,2,3]{Vincent Ginis}
\affil[1]{Department of Business Technology and Operations, Data Analytics Laboratory\\
Vrije Universiteit Brussel (VUB), Pleinlaan 2, 1050 Brussels, Belgium}
\affil[2]{imec-SMIT, Vrije Universiteit Brussel, Pleinlaan 9, 1050 Brussels, Belgium}
\affil[3]{School of Engineering and Applied Sciences, Harvard University\\
Cambridge, Massachusetts 02138, USA}
\affil[ ]{Corresponding author: \href{mailto:brecht.verbeken@vub.be}{\texttt{brecht.verbeken@vub.be}}}
\date{13 September 2026}
\hypersetup{
  pdftitle={On the Spectral Region of n-Cycle Stochastic Matrices},
  pdfauthor={Brecht Verbeken and Vincent Ginis},
  pdfsubject={Exact single-eigenvalue region for stochastic matrices on a directed cycle with self-loops},
  pdfkeywords={stochastic matrices, prescribed zero pattern, spectral region, convexity, algebraic boundary, 15A18, 15B51}
}

\begin{document}
\maketitle
\thispagestyle{plain}

\begin{abstract}
For every $n\geq3$, we determine the union of the spectra of row-stochastic matrices supported on a directed $n$-cycle with self-loops and strictly positive cycle edges. These matrices describe progression through cyclic stages with geometric waiting times, and also arise by uniformization of unidirectional continuous-time reaction cycles. For the latter, the classification determines the minimum transition-rate bound required to realize a prescribed nonreal eigenvalue and provides rates attaining that bound. The angles $m=\Arg\lambda$ and $M=\Arg(\lambda-1)$ reduce the nonreal eigenvalue problem to determining the range of a strictly convex logarithmic sum over angles with prescribed sum. Jensen's inequality gives its minimum, simplex vertices give its finite maximum, and connectedness makes these bounds sufficient for realization. The resulting branch graphs are strictly decreasing and their horizontal sections are ordered. This ordering gives the complete boundary, which alternates between uniform-cycle segments and one-loop algebraic arcs. We state a direct membership criterion and construct a realizing matrix for every spectral point; every nonreal point admits a realization with at most two distinct self-loop weights. The real spectral set is $[-1,1]$ for even $n$ and $(0,1]$ for odd $n$. Allowing deleted cycle edges adds only $0$ in odd dimension and adds nothing in even dimension. We also apply the classification to a branching network with matched holding probabilities on parallel paths. The proofs are independent of Karpelevich's theorem.
\end{abstract}

\noindent\textbf{Keywords.} Stochastic matrices; prescribed zero pattern; spectral region; convexity; algebraic boundary.\par
\noindent\textbf{2020 Mathematics Subject Classification.} 15A18, 15B51.

\section{The problem and the main result}\label{sec:introduction}

For $n\geq2$, let $A_n(\alpha)$ be the matrix whose only possibly nonzero entries are
\begin{equation}\label{eq:family}
 (A_n(\alpha))_{j,j}=\alpha_j,
 \qquad (A_n(\alpha))_{j,j+1}=1-\alpha_j,
 \qquad 0\leq\alpha_j<1,
\end{equation}
where indices are taken modulo $n$. Thus every cycle edge has positive weight, while self-loops may be absent. We study
\begin{equation}\label{eq:region-definition}
 \Cn=\{A_n(\alpha):\alpha\in[0,1)^n\},
 \qquad \Sigma_n=\bigcup_{A\in\Cn}\sigma(A).
\end{equation}
The half-open parameter interval is part of the problem: replacing it by $[0,1]$ would allow cycle edges to disappear. In particular, boundary points obtained by taking parameter limits must be distinguished from eigenvalues of admissible matrices.

A concrete question arises from the continuous-time version of this cycle. At stage $j$, the chain advances to stage $j+1$ at rate $r_j>0$, so its generator has entries $Q_{j,j}=-r_j$ and $Q_{j,j+1}=r_j$. Suppose that the number of stages is fixed and that one wishes to realize an eigenmode with decay rate $d>0$ and angular frequency $\omega>0$. What is the smallest upper bound on the rates that permits the eigenvalue $\nu=-d+\ii\omega$, and which rates attain it? The time factor $\ee^{t\nu}$ has decay time $1/d$ and oscillation period $2\pi/\omega$. Uniformization turns this question into membership in $\Sigma_n$. In \cref{cor:minimum-rate}, the complete region gives the exact minimum, attained with all but at most one transition at the upper bound. For three stages, the pair $-1\pm\ii/2$ requires a bound of $5/6$, whereas allowing backward transitions reduces the minimum bound on total exit rates to $2/3$. The calculation in \cref{sec:minimum-rates} makes explicit how the prescribed support changes the possible dynamics.

The effect of closing the parameter domain can be stated exactly. For $n\geq3$, let $\widetilde\Sigma_n$ be the spectral union obtained by allowing $\alpha\in[0,1]^n$. Then
\[
 \widetilde\Sigma_n=\Sigma_n\cup\{0\}=\cl\Sigma_n.
\]
Thus deleting a cycle edge adds no nonreal points: it adds only $0$ when $n$ is odd and adds nothing when $n$ is even. The determinant argument and the endpoint classification proving this statement are given in \cref{cor:closed-domain}. The distinction between an admissible parameter and its limit remains necessary when constructing a matrix for a prescribed boundary point.

The unrestricted stochastic single-eigenvalue problem was settled by Dmitriev and Dynkin and by Karpelevich~\cite{DmitrievDynkin,Karpelevich}. Subsequent work has clarified polynomial descriptions of its boundary, matrix realizations, and the structure of its proof~\cite{Ito,JohnsonPaparella,KimKim,KirklandSmigoc,MungerNickersonPaparella}. A prescribed support introduces a different question: the ambient stochastic region does not determine which points can still be realized after entries are forced to vanish. Related restrictions lead to the doubly stochastic problem~\cite{PerfectMirsky,MashreghiRivard,LevickPereiraKribs,HarlevJohnsonLim} and to eigenvalue regions of monotone stochastic matrices~\cite{VagenendeVerbekenGuerry}.

For prescribed zero patterns, Ran and Teng studied the inverse eigenvalue problem in dimension three and formulated further questions in dimension four~\cite{RanTeng}. The $4$-cycle stochastic region was determined by Vagenende, Verbeken, Algaba, and Guerry~\cite{VagenendeEtAl}. That result is the motivating special case and conceptual precursor of the present classification. Here the dimension is arbitrary, and the boundary is derived directly from the cycle support rather than from the unrestricted stochastic region.

More specifically, the four-cycle paper~\cite{VagenendeEtAl} already passes from the multiplicative characteristic equation to the angles $m,M$, a fixed sum of arguments, and a zero sum of logarithmic moduli. Its convexity argument identifies the equal-angle minimum, distinguishes finite and unbounded maxima, and produces the uniform and one-loop realizing families. The present work develops this mechanism for all admissible argument branches in arbitrary dimension. The additional geometric task is to determine which portions of those branches remain visible after their regions are combined. Strictly decreasing branch graphs and an ordering of their horizontal sections resolve that task and give the complete boundary chain. The same ordering yields a construction valid in every dimension. The specialization of the reduced polynomial to the earlier four-cycle equation is recorded in \eqref{eq:four-cycle-specialization}.

The one-loop family also has a direct connection with Leslie matrices. Kirkland determined an eigenvalue region for stochastic Leslie matrices~\cite{KirklandLeslie}. If $\Pi$ has columns $e_1,e_n,e_{n-1},\ldots,e_2$, then $\Pi^{\mathsf T}A_n(\alpha,0,\ldots,0)\Pi$ has top row $(\alpha,0,\ldots,0,1-\alpha)$ and ones on the subdiagonal. Its characteristic polynomial is
\[
 \lambda^n-\alpha\lambda^{n-1}+\alpha-1.
\]
Thus the one-loop realization is a particular stochastic Leslie matrix after relabeling. The full family studied here allows holding at every stage, and the question is which parts of these one-loop branches bound its entire spectral union. Kirkland's work on near-periodic fecundity patterns~\cite{KirklandNearPeriodic} further illustrates the dynamical role of nonreal eigenvalues near the spectral radius: they govern oscillatory modes in age-structured population models.

The distinction between a polynomial root and a point on the boundary also occurs in the unrestricted stochastic problem. Kirkland, Laffey, and \v{S}migoc~\cite{KirklandLaffeySmigoc} identify the boundary root at a prescribed argument when the relevant polynomial has competing candidates. Their subdominant-realization result concerns unrestricted stochastic matrices. Here the classification and reconstruction concern an individual eigenvalue under the cycle support restriction; no assertion that the chosen eigenvalue is subdominant is required.

The rate question connects this support restriction to spectral constraints on biochemical oscillations. Barato and Seifert~\cite{BaratoSeifert2017} formulated conjectural bounds relating oscillation coherence to thermodynamic driving and network topology. For a mode $-d+\ii\omega$, the coherence ratio is $\omega/d$, while $\omega/(2\pi d)$ counts the number of periods in one exponential decay time.

Kolchinsky, Ohga, and Ito~\cite{KolchinskyOhgaIto2026} prove eigenvalue localization bounds involving cycle affinity and eigenvector winding for both unicyclic and more general Markov generators. Their class permits backward transitions and includes irreversible cycles. Their unicyclic proof also combines phase increments with Jensen's inequality for a convex logarithmic expression~\cite[Eqs.~(12)--(13)]{KolchinskyOhgaIto2026}; the additional task here is to determine the complete attainable range and reconstruct admissible parameters for the one-way support. For the strictly one-way support considered here, \cref{cor:minimum-rate} gives an exact answer to the rate question: when the target is feasible, it determines the smallest possible maximum stage rate and provides rates attaining it. The complete three-stage calculation in \cref{sec:minimum-rates} also shows explicitly how allowing backward transitions changes this minimum. These statements concern realization of a prescribed individual mode; they impose no condition that it be the slowest decaying mode of the full chain.

The cases $n=1,2$ require no geometric analysis. For $n=1$ the cycle edge and self-loop coincide, so the only matrix is $[1]$. For $n=2$ the eigenvalues are $1$ and $\alpha_1+\alpha_2-1$. Hence
\[
 \Sigma_1=\{1\},\qquad \Sigma_2=[-1,1].
\]
From now on, $n\geq3$ and $\Cp=\{z\in\C:\Imag z>0\}$.

\subsection{Cyclic stages and their spectral modes}\label{sec:stage-motivation}

The support in \eqref{eq:family} has an immediate probabilistic interpretation. At stage $j$, a discrete-time Markov chain either waits at that stage, with probability $\alpha_j$, or advances to stage $j+1$, with probability $q_j=1-\alpha_j$. The number $G_j$ of steps spent at the stage before advancing has distribution
\[
 \mathbb P(G_j=t)=q_j\alpha_j^{t-1}\quad(t=1,2,\ldots),
 \qquad \mathbb E G_j=\frac1{q_j}.
\]
Successive stage durations in a completed circuit are independent. The model therefore fixes the order of the stages while allowing their mean durations to vary independently. Its stationary proportions also retain that interpretation. If $\pi$ is stationary, balance at stage $j$ gives $\pi_jq_j=\pi_{j-1}q_{j-1}$, so normalization yields
\[
 \pi_j=\frac{q_j^{-1}}{\sum_{\ell=1}^n q_\ell^{-1}}.
\]
Stages with longer mean durations have larger stationary proportions.

The eigenvalues describe how observables change as the chain progresses. For a column vector $f$ with $A_n(\alpha)f=\lambda f$, and a chain $(X_t)_{t\geq0}$ started at state $i$,
\[
 \mathbb E_i f(X_t)=(A_n(\alpha)^t f)_i=\lambda^t f_i.
\]
When $\lambda=\rho\ee^{\ii\vartheta}$ is nonreal, the real and imaginary parts of $f$ span a real invariant space on which the evolution combines the factor $\rho^t$ with angular increment $\vartheta$ per step. The question studied here is which such factors can occur when the number of cyclic stages is prescribed and the holding probabilities may vary. Membership in $\Sigma_n$ answers existence, while the realizing matrices supply the stage probabilities. The question leaves other eigenvalues and the relative size of the chosen mode unrestricted.

A continuous-time version occurs in unidirectional linear reaction cycles. The one-molecule model of Mukherjee, Ghosh, and De~\cite{MukherjeeGhoshDe} has successive transformations $X_j\to X_{j+1}$ at positive rates $r_j$, with cyclic indices. Its generator $Q$ has entries $Q_{j,j}=-r_j$ and $Q_{j,j+1}=r_j$. For any $c\geq\max_j r_j$, uniformization gives
\[
 P=I+Q/c=A_n(1-r_1/c,\ldots,1-r_n/c).
\]
A Poisson clock of rate $c$ together with transition matrix $P$ describes the same continuous-time chain, and its eigenvalues correspond by $\mu=c(\lambda-1)$. This connects the classification to the possible modes of a reaction cycle with bounded stage rates. The matrix $P$ is the uniformized transition matrix; observing the chain at a fixed time interval instead gives $\exp(tQ)$, which generally permits transitions across several stages. The precise generator statement is proved in \cref{cor:generator-region}.

The same stage interpretation explains the product structure of the proof. Completing a circuit requires traversing every stage once, and the associated generating functions multiply. \Cref{prop:renewal} makes this relation exact and distinguishes a completed circuit from an ordinary return to a state, which may consist of a single self-loop. In angular coordinates the product becomes a fixed argument sum and a logarithmic modulus sum. The range of the latter, rather than the convexity of its zero set, supplies the existence criterion. The next subsection introduces the coordinates used to read that range geometrically.

\subsection{Coordinates adapted to the two distinguished points}

For $\lambda\in\Cp$, put
\begin{equation}\label{eq:angular-coordinates}
 m=\Arg\lambda,\qquad M=\Arg(\lambda-1),
 \qquad D=\{(m,M):0<m<M<\pi\}.
\end{equation}
The two angles record the directions of $\lambda$ from $0$ and from $1$. Their inverse map is
\begin{equation}\label{eq:Lambda}
 \Lambda(m,M)=\frac{\sin M}{\sin(M-m)}\ee^{\ii m}.
\end{equation}
In particular, fixing $M$ means moving along a ray starting at $1$. This makes horizontal sections of the angular region useful for identifying the global boundary.

Let
\begin{equation}\label{eq:branch-angles}
 \begin{gathered}
 K=\left\lfloor\frac{n-1}{2}\right\rfloor,\qquad
 \theta_k=\frac{2\pi k}{n},\qquad
 \phi_k=\frac{\pi+\theta_k}{2},\qquad
 \gamma_k=\frac{2\pi k-\pi}{n-1}
 \quad (1\leq k\leq K).
 \end{gathered}
\end{equation}
The angle $\theta_k$ locates the root of unity $\omega_k=\ee^{\ii\theta_k}$. The level $\phi_k$ is the direction from $1$ to $\omega_k$, and $\gamma_k$ will be the limiting direction of an algebraic arc at $0$.

For $m\in[\gamma_k,\theta_k]$, consider the scalar equation
\begin{equation}\label{eq:radial-polynomial}
 \rho^n\sin m+\rho\sin(2\pi k-nm)
       =\sin(2\pi k-(n-1)m).
\end{equation}
The theorem below includes the existence and uniqueness needed to use this equation as a definition. Write its solution in $[0,1]$ as $\rho_k(m)$, and set
\begin{equation}\label{eq:beta-definition}
 \beta_k(m)=\Arg\bigl(\rho_k(m)\ee^{\ii m}-1\bigr),
 \qquad \beta_k(\gamma_k)=\pi.
\end{equation}
Once strict monotonicity is established, write
\[
 h_k=\beta_k^{-1}:[\phi_k,\pi)\longrightarrow(\gamma_k,\theta_k].
\]

We call branch $k$ active at height $M$ when $\phi_k\leq M$.

\Needspace{18\baselineskip}
\begin{theorem}[Spectral region]\label{thm:region}
For each $1\leq k\leq K$, \eqref{eq:radial-polynomial} has a unique solution $\rho_k(m)\in[0,1]$ for every $m\in[\gamma_k,\theta_k]$. The function $\beta_k$ is continuous and strictly decreasing, with
\[
 \rho_k(\gamma_k)=0,\qquad \rho_k(\theta_k)=1,
 \qquad \beta_k(\gamma_k)=\pi,\qquad \beta_k(\theta_k)=\phi_k.
\]
For $\phi_1\leq M<\pi$, define the largest active index and the angular endpoint function by
\begin{equation}\label{eq:endpoint-function}
 \kappa(M)=\max\{k\in\{1,\ldots,K\}:\phi_k\leq M\},
 \qquad \mu_n(M)=h_{\kappa(M)}(M).
\end{equation}
No upper-half-plane spectral point has $M<\phi_1$. For $(m,M)\in D$ with $M\geq\phi_1$, the exact membership criterion is
\begin{equation}\label{eq:main-membership}
 \Lambda(m,M)\in\Sigma_n\quad\Longleftrightarrow\quad m\leq\mu_n(M).
\end{equation}
The lower-half-plane region is its complex conjugate, and
\begin{equation}\label{eq:real-set}
 \Sigma_n\cap\R=
 \begin{cases}
 [-1,1],&n\text{ even},\\
 (0,1],&n\text{ odd},
 \end{cases}
 \qquad (0,1)\subset\interior\Sigma_n.
\end{equation}
The set $\Sigma_n$ is star-convex with centre $1$. The point $0$ belongs to its boundary for every $n\geq3$, and belongs to $\Sigma_n$ exactly when $n$ is even.
\end{theorem}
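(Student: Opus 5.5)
The plan starts from the characteristic polynomial of $A_n(\alpha)$. Expanding along the cycle gives
\[
 \prod_{j=1}^n(\lambda-\alpha_j)=\prod_{j=1}^n(1-\alpha_j),
\]
since the only permutations contributing are the identity and the full $n$-cycle. For $\lambda\in\Cp$ this becomes
\[
 \prod_j \frac{\lambda-\alpha_j}{1-\alpha_j}=1 .
\]
Set $x_j=\Arg(\lambda-\alpha_j)$. Then $x_j\in[m,M)$, with $x_j=m$ exactly when $\alpha_j=0$. The equation is equivalent to two conditions:
\[
 \sum_j x_j\in 2\pi\mathbb Z,
 \qquad
 \sum_j \log\frac{|\lambda-\alpha_j|}{1-\alpha_j}=0 .
\]
By the sine rule in the triangle $0,\alpha_j,\lambda$ (and $1,\alpha_j,\lambda$), each summand is a function of $x_j$ alone for fixed $(m,M)$:
\[
 F(x)=\log\frac{\sin M\,\sin(x-m)}{\sin m\,\sin(M-x)}\cdot\text{(normalisation)} .
\]
Concretely, $|\lambda-\alpha|/(1-\alpha)=\sin M/\sin(M-x)$ times a factor from $0$; I would compute this carefully. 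I then verify that $F$ is strictly convex on $[m,M)$ and tends to $+\infty$ as $x\to M$. Since the $x_j$ lie in $[m,M)$, their sum lies in $[nm,nM)$, so the admissible sums are $2\pi k$ with $nm\le 2\pi k<nM$.

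Membership of $\Lambda(m,M)$ then reduces to a range problem. For some admissible $k$, the function $\sum_j F(x_j)$ must attain $0$ on the set
\[
 S_k=\Bigl\{x\in[m,M)^n:\ \sum_j x_j=2\pi k\Bigr\}.
\]
The set $S_k$ is connected, so by the intermediate value theorem this happens iff $\min\le0\le\sup$. By Jensen's inequality the minimum is attained at $x_j=2\pi k/n$. The supremum is either $+\infty$ (when a face with some $x_j\to M$ is reachable) or is attained at a vertex of the truncated simplex. In the finite case that vertex has all but one coordinate equal to $m$, i.e.\ all but one $\alpha_j$ equal to $0$. The equal-angle condition, translated back, means that $\lambda$ is an eigenvalue of a uniform cycle, namely $\lambda=\alpha+(1-\alpha)\omega_k$. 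Such $\lambda$ lie on the segment from $1$ to $\omega_k$, which explains the level $\phi_k$ and why $M<\phi_1$ is impossible. The one-loop vertex gives
\[
 \lambda^n-\alpha\lambda^{n-1}+\alpha-1=0 .
\]
Writing $\lambda=\rho e^{\ii m}$ and eliminating $\alpha$ (take the imaginary part after dividing suitably) yields \eqref{eq:radial-polynomial}. This is how $\rho_k$ and $\beta_k$ arise, and the branch-$k$ region in $D$ is $\{m\le h_k(M)\}$ intersected with the active range.

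For the scalar lemma, I will look at $g(\rho)=\rho^n\sin m+\rho\sin(2\pi k-nm)-\sin(2\pi k-(n-1)m)$ on $m\in[\gamma_k,\theta_k]$ and check the signs $g(0)\le0\le g(1)$. For convexity-type uniqueness, note that $g$ has the shape $a\rho^n+b\rho-c$ with $a>0$. Descartes' rule then gives at most one positive root when $c\ge0$, which holds because $(n-1)m\le 2\pi k$ on this interval, up to a sign check of $\sin$. Evaluating at $\gamma_k$ and $\theta_k$ gives the stated endpoint values. Continuity of $\beta_k$ follows from the implicit function theorem, with simplicity of the root following from the same sign structure. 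Strict monotonicity I would get by showing that $\rho_k e^{\ii m}$ moves so that its argument from $1$ decreases. One route is to differentiate \eqref{eq:radial-polynomial} implicitly. A likely cleaner route is the angular interpretation: along the curve, the one-loop angle is pinned at the boundary of the feasible simplex.

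The main obstacle is the global combination over $k$: showing that the union of branch regions at height $M$ is exactly $m\le h_{\kappa(M)}(M)$. That requires the ordering $h_k(M)<h_{k+1}(M)$ wherever both are active, together with showing that the infinite-supremum cases add nothing beyond this. I would first try to prove $\gamma_k<\gamma_{k+1}$, $\theta_k<\theta_{k+1}$, and non-crossing of the graphs. Non-crossing would follow because a crossing would give one $\lambda$ that is simultaneously a one-loop root for two values of $k$, with the same $\alpha$ forced by the modulus condition; the argument sums $2\pi k$ would then conflict.

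The remaining items follow from this picture. For the real set: the nonreal region's closure meets the real axis at the limits of the curves. Directly, real eigenvalues are real roots of $\prod(\lambda-\alpha_j)=\prod(1-\alpha_j)$. For even $n$, take all $\alpha_j=0$ and then uniform $\alpha$ to cover $[-1,1]$. For odd $n$, a negative $\lambda$ makes the left side negative, which is impossible, and $\lambda=0$ requires some $\alpha_j=1$, which is excluded; uniform cycles cover $(0,1]$. Interiority of $(0,1)$ follows since nearby nonreal points satisfy $m$ small, hence $m\le\mu_n(M)$. Star-convexity about $1$ follows because $M$ is constant on rays from $1$ and $m$ decreases as one moves toward $1$. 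The point $0$ lies on the boundary because $\rho_k(\gamma_k)=0$. Conjugate symmetry is immediate since the matrices are real.
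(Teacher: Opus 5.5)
Your architecture matches the paper's: the two-permutation determinant, angles in $[m,M)$, a strictly convex modulus term, the Jensen minimum, the simplex-vertex maximum, connectedness, and uniform chords versus one-loop arcs. The real-axis step, however, fails for odd $n$. You assert that uniform cycles cover $(0,1]$. But $\Un(q)$ has eigenvalues $1-q+q\ee^{2\pi\ii j/n}$, and for odd $n$ these are real only when $j=0$, so uniform matrices contribute only the real eigenvalue $1$. Neither of your extremal families can be patched to fix this. Take $r\in(0,1)$. If every $\alpha_j\le r$, then $0\le r-\alpha_j<1-\alpha_j$, so $\prod_j(r-\alpha_j)<\prod_j(1-\alpha_j)$. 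Positivity of the left side also forces an even number of $\alpha_j>r$. Hence at least two holding probabilities must exceed $r$; the one-loop family, for instance, would need $\alpha=(1-r^n)/(1-r^{n-1})>1$. You need an explicit two-loop matrix, such as the paper's $A_n(a,a,0,\ldots,0)$ with $c=r^{(n-2)/2}$ and $a=(1+rc)/(1+c)$, which satisfies $(a-r)^2r^{n-2}=(1-a)^2$. Approximating by nonreal spectral points only gives $(0,1)\subset\cl\Sigma_n$, because $[0,1)^n$ is not compact. Your interiority argument needs the real diameter of each disk to be attained, so it inherits the same gap.

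Several further steps are asserted rather than proved. To convert the finite-maximum condition $g_k(m,M)\ge0$ into $M\le\beta_k(m)$, you need that for fixed $m$ this set is exactly $(U_k(m),\beta_k(m)]$. To invert $\beta_k$ to $h_k$, you need strict decrease. Saying that ``the one-loop angle is pinned'' gives neither. The paper obtains both from the signs $\partial_Mg_k<0$ and $\partial_mg_k<0$ on $U_k(m)<M<\pi$, which follow from monotonicity of cotangent. It then identifies the zero set with the radial equation through $\ee^{g_k}=\rho^n\sin m/(\sin U-\rho\sin(U-m))$, whose denominator is positive. Your non-crossing route to the ordering can be completed by continuity together with a comparison at $M=\phi_{k+1}$. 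The simpler fact you missed is the separation $\theta_k<\gamma_{k+1}$: with $h_k(M)\in(\gamma_k,\theta_k]$ it gives $h_k(M)\le\theta_k<\gamma_{k+1}<h_{k+1}(M)$ directly. Next, $\zeta_k(m)\to0$ shows only that $0\in\cl\Sigma_n$. For even $n$, where $0\in\Sigma_n$, you also need nonspectral points near $0$; these come from $\Arg\lambda\le\theta_K<\pi$ on the nonreal region. Finally, when you compute the summand carefully it is exactly $\log(\sin M/\sin(M-x))$, with no factor involving $m$, and that cancellation is what makes the range problem independent of $\lambda$ beyond the interval $[m,M)$.
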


\subsection{Deciding membership and constructing a matrix}\label{sec:direct-test}

The endpoint function in \cref{thm:region} describes the geometry, but membership itself has a formulation that requires no inversion of a branch graph. In addition to $\theta_k$ and $\phi_k$, write
\begin{equation}\label{eq:direct-data}
 \begin{gathered}
 U_k(m)=2\pi k-(n-1)m,\\
 H_n(\lambda)=\Imag\bigl((\lambda^n-1)\overline{(\lambda^{n-1}-1)}\bigr).
 \end{gathered}
\end{equation}
The quantity $U_k(m)$ is the greatest value one argument can have under the fixed-sum and lower-bound constraints, before imposing the excluded upper endpoint $M$. It is obtained when the other $n-1$ coordinates equal $m$. The polynomial expression $H_n$ records the sign of the finite upper end of the logarithmic range, under the conditions stated below.

\begin{corollary}[Direct membership criterion]\label{cor:direct-membership}
Let $n\geq3$ and $\lambda\in\Cp$, with $m=\Arg\lambda$ and $M=\Arg(\lambda-1)$. If $M<\phi_1$, then $\lambda\notin\Sigma_n$. For $\phi_1\leq M<\pi$, set
\begin{equation}\label{eq:active-floor}
 k=\kappa(M)=\left\lfloor n\left(\frac{M}{\pi}-\frac12\right)\right\rfloor.
\end{equation}
Then $1\leq k\leq K$ and
\begin{equation}\label{eq:direct-membership}
 \lambda\in\Sigma_n
 \quad\Longleftrightarrow\quad
 m\leq\theta_k\ \text{ and }\
 \bigl(M\leq U_k(m)\ \text{or}\ H_n(\lambda)\geq0\bigr).
\end{equation}
For a lower-half-plane point, apply the criterion to its conjugate. Real points are governed by \eqref{eq:real-set}.
\end{corollary}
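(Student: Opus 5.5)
The plan is to read the corollary off \cref{thm:region} in two steps. First identify $\kappa(M)$ explicitly. Then show that, at the active index $k=\kappa(M)$, the inequality $m\leq h_k(M)$ is equivalent to the two conditions in \eqref{eq:direct-membership}.

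\textbf{Step 1: the active index.} Since $\phi_k=\pi/2+\pi k/n$, we have $\phi_k\leq M$ exactly when $k\leq n(M/\pi-1/2)$. For $M\in[\phi_1,\pi)$ this bound lies in $[1,n/2)$, and the largest integer $k$ satisfying it is at most $K=\lfloor(n-1)/2\rfloor$. This gives \eqref{eq:active-floor} and $1\leq k\leq K$. The statements for $M<\phi_1$, for the lower half-plane and for real points are quoted directly from \cref{thm:region}.

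\textbf{Step 2: the angular reduction at fixed $k$.} For $\lambda\in\Cp$, the characteristic equation of $A_n(\alpha)$ is $\prod_j(\lambda-\alpha_j)=\prod_j(1-\alpha_j)$. Put $\psi_j=\Arg(\lambda-\alpha_j)$. As $\alpha_j$ runs over $[0,1)$, the angle $\psi_j$ runs over $[m,M)$. The sine rule in the triangle $0$--$\alpha_j$--$1$ relative to $\lambda$ gives
\[
 \frac{|\lambda-\alpha_j|}{1-\alpha_j}=\frac{\sin M}{\sin(M-\psi_j)}.
\]
Hence $\lambda$ is an eigenvalue with argument sum $\sum\psi_j=2\pi k$ exactly when
\[
 F(\psi):=\sum_j\log\sin(M-\psi_j)=n\log\sin M
\]
has a solution on the slice $\{\psi\in[m,M)^n:\sum\psi_j=2\pi k\}$. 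This slice is connected, so solvability is equivalent to $n\log\sin M$ lying in the range of $F$ on it.

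\textbf{Step 3: the range of $F$.} The function $F$ is strictly concave, so its maximum is attained at the equal point $\psi_j=\theta_k$. That point is admissible iff $m\leq\theta_k$, since $\theta_k<\phi_k\leq M$ automatically. At the equal point, $\sin(M-\theta_k)\geq\sin M$ holds precisely because $M\geq\phi_k$. So the upper-end condition is $m\leq\theta_k$.

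For the infimum I would use extreme points of the truncated simplex. If $U_k(m)\geq M$, a coordinate can approach $M$ and the infimum is $-\infty$, so the lower-end condition holds automatically. If $U_k(m)<M$, the minimum is attained at the vertex where $n-1$ coordinates equal $m$ and one equals $U_k(m)$. That vertex corresponds to the one-loop matrix: $\psi_j=m$ means $\alpha_j=0$. The lower-end inequality $(n-1)\log\sin(M-m)+\log\sin(M-U_k(m))\leq n\log\sin M$ must then be identified with $H_n(\lambda)\geq0$.

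To make this identification I would write the one-loop equation as $\alpha=(\lambda^n-1)/(\lambda^{n-1}-1)$. Its imaginary part has the sign of $H_n(\lambda)$. Moving $\lambda$ along the ray from $1$ at fixed $M$ changes $F$ at the vertex monotonically, with equality exactly on the one-loop arc $\beta_k$ where $H_n=0$. The sign convention is then fixed by checking one point, for instance near the uniform-cycle point $\omega_k$.

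\textbf{Step 4: combining.} \Cref{thm:region} asserts that $\lambda\in\Sigma_n$ iff $m\leq h_k(M)$, i.e. iff realization with argument sum $2\pi k$ succeeds for $k=\kappa(M)$. Indeed, the horizontal-section ordering shows that no smaller active index realizes points that branch $\kappa(M)$ misses. Steps 2 and 3 show that realizability at index $k$ is exactly ``$m\leq\theta_k$ and ($M\leq U_k(m)$ or $H_n(\lambda)\geq0$)''. The boundary case $M=U_k(m)$ must be checked separately, since $\psi_j<M$ is strict; there the infimum is approached but not attained, and it equals $-\infty$.

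\textbf{Main obstacle.} I expect the hardest step to be the sign identification in Step 3: matching $H_n(\lambda)\geq0$ with the vertex inequality, including the orientation of the sign and the restriction to the regime $U_k(m)<M$. There $\alpha$ must also be checked to lie in $[0,1)$ and to produce the branch with argument sum $2\pi k$, rather than some other root of the one-loop polynomial. I would first try to verify this by a direct computation of $\Imag$ along the ray $\Arg(\lambda-1)=M$, using \eqref{eq:radial-polynomial}, which is precisely the zero set of that imaginary part written in polar form.
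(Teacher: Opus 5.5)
Your outline matches the paper's proof in structure. The common ingredients are the floor formula for $\kappa(M)$, the angular reduction (your concave $F$ is $n\log\sin M-\Psi_M$), and the range on one argument branch with the equal-angle extremum and the vertex $(U_k(m),m,\ldots,m)$. You also reduce to the largest active branch through the ordering of horizontal sections, as the paper does. The one step where your primary plan falls short is identifying the vertex inequality $g_k(m,M)\geq0$ with $H_n(\lambda)\geq0$ when $U_k(m)<M$.

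As stated, ``monotone along the ray, zero exactly on the arc, sign fixed at one point'' leaves two gaps.

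\textbf{The zero set of $H_n$ is larger than the arc.} In $\Cp$ it also contains the other one-loop arcs, points where $(\lambda^n-1)/(\lambda^{n-1}-1)$ is real but outside $[0,1)$, and any $(n-1)$st roots of unity there. You would have to show that none of these meets the part of the ray with $U_k(m)<M$ and $m\leq\theta_k$.

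\textbf{A shared zero does not give a sign change.} Even if $H_n$ and $g_k$ share the single zero $m=h_k(M)$, $H_n$ might not change sign there. So one sample point fixes the sign on only one side.

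The computation you mention at the end removes both issues at once, and it is exactly what the paper does in \cref{prop:carrier}. With $\lambda=\rho\ee^{\ii m}$ and $U=U_k(m)$,
\[
 H_n(\lambda)=\rho^{n-1}\bigl[\rho^n\sin m+\rho\sin(U-m)-\sin U\bigr],
 \qquad
 \ee^{g_k(m,M)}=\frac{\rho^n\sin m}{\sin U-\rho\sin(U-m)}.
\]
Here $\sin U-\rho\sin(U-m)=\sin m\,\sin(M-U)/\sin(M-m)$, which is positive precisely because $U<M$. Hence
\[
 H_n(\lambda)=\rho^{n-1}\bigl(\ee^{g_k(m,M)}-1\bigr)\bigl(\sin U-\rho\sin(U-m)\bigr).
\]
So the two signs agree identically throughout the finite-maximum regime. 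You need not locate zeros, nor check the admissibility or branch index of the one-loop parameter. The positivity of this factor is also why the comparison is used only for $M>U_k(m)$; the case $M\leq U_k(m)$ is covered by the unbounded alternative.
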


The proof is given after \cref{prop:carrier} in \cref{sec:algebra}. The largest active branch suffices because its horizontal section contains all earlier active sections. On that branch the condition $M\leq U_k(m)$ needs no algebraic sign restriction, including at equality. The alternative $M>U_k(m)$ is precisely the regime in which the denominator used to compare $H_n$ with the finite maximum is positive. Thus the angular conditions remain part of the criterion. The formula \eqref{eq:active-floor} is used only for $M<\pi$; the real axis is treated separately.

There is also a short structural description of the realizing matrices. \Cref{cor:reconstruction} proves that every nonreal $\lambda\in\Sigma_n$ is an eigenvalue of a matrix
\[
 A_n(a,b,\ldots,b),\qquad 0\leq b\leq a<1.
\]
Consequently, all but one stage can be given the same holding probability. The full construction remains in \cref{sec:real}, where the geometric meaning of its parameters can be seen directly. Membership, boundary parametrization, and realization are distinct uses of the classification, and the formulas below address each of them.

Between consecutive levels $\phi_k$, the endpoint $\mu_n(M)$ follows the inverse of one algebraic arc. At a new level $\phi_{k+1}$, it jumps to the next root of unity. The jump supplies a straight boundary segment. Thus the upper boundary is an alternating chain from $1$ to $0$; \cref{thm:boundary} gives all endpoints and parameter intervals. \Cref{fig:odd-even} compares the odd- and even-dimensional regions.

\begin{figure}[!htbp]
 \centering
 \begin{subfigure}[t]{0.485\textwidth}
   \centering
   \includegraphics[width=\linewidth]{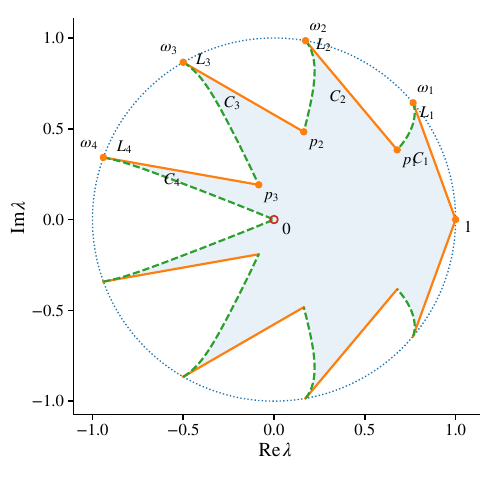}
   \caption{Odd dimension: $n=9$.}\label{fig:odd}
 \end{subfigure}\hfill
 \begin{subfigure}[t]{0.485\textwidth}
   \centering
   \includegraphics[width=\linewidth]{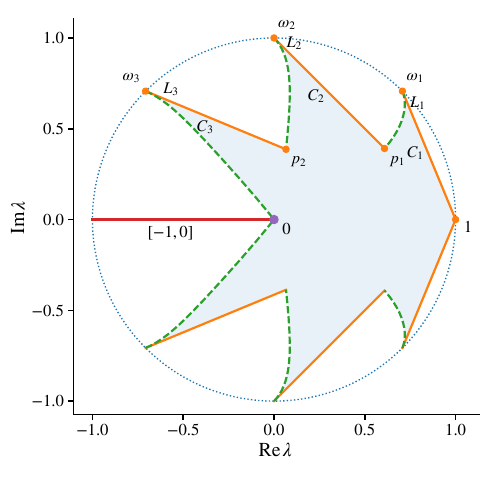}
   \caption{Even dimension: $n=8$.}\label{fig:even}
 \end{subfigure}
 \caption{Spectral regions in the complex plane. The upper boundary alternates between line segments $C_k$ realized by uniform matrices (solid) and one-loop arcs $L_k$ (dashed), meeting at $\omega_k=\ee^{2\pi\ii k/n}$ and $p_k$. The lower boundary is the conjugate chain; the dotted circle is $|\lambda|=1$. The open marker at $0$ in the odd case denotes a boundary point that is not attained. In the even case, $0$ is attained and the additional real boundary interval $[-1,0]$ is shown separately. The shaded parts depict the two-dimensional regions.}
 \label{fig:odd-even}
\end{figure}

\Needspace{7\baselineskip}
\subsection{Why convexity produces this geometry}

Only two permutations contribute to the characteristic determinant. After passing to arguments, the eigenvalue equation therefore imposes a fixed sum of angles and a zero sum of logarithmic moduli. The latter sum is strictly convex. Its minimum occurs when all angles agree; its finite maximum occurs when all excess angle is placed in one coordinate. These two configurations become, respectively, uniform matrices and matrices with at most one nonzero self-loop.

The proof follows this mechanism. \Cref{sec:feasibility} makes the angular reduction exact, including the inverse map to admissible parameters. \Cref{sec:range} finds the complete range of the convex sum. \Cref{sec:sectors} turns the resulting inequalities into sectors and identifies their realizing families. \Cref{sec:boundary} orders their horizontal sections and extracts the boundary. \Cref{sec:real} treats the real axis and gives a constructive radial filling, followed by a realization summary in \cref{tab:realization}. \Cref{sec:algebra} records the polynomial equation, its sign test, and continuation of the one-loop eigenvalue branches. None of these algebraic refinements is needed to justify the boundary extraction.

\Cref{sec:models} then returns to the stochastic interpretation. It proves the circuit generating-function identity and the continuous-time correspondence, and applies the classification to two parallel return paths with matching holding probabilities. The latter example also makes explicit which factorization survives when the support branches and which part of the convex angular argument requires additional structure.

\section{From the characteristic equation to angular feasibility}\label{sec:feasibility}

We first record three elementary properties. Since $A_n(\alpha)\mathbf1=\mathbf1$ and $\|A_n(\alpha)\|_\infty=1$, every matrix has eigenvalue $1$ and every eigenvalue satisfies $|\lambda|\leq1$. Real entries give conjugation symmetry. Finally, for $0<s\leq1$,
\begin{equation}\label{eq:affine-realization}
 (1-s)I+sA_n(\alpha)
 =A_n\bigl(1-s(1-\alpha_1),\ldots,1-s(1-\alpha_n)\bigr).
\end{equation}
Every parameter on the right belongs to $[0,1)$. An eigenvalue $\lambda$ is sent to $1+s(\lambda-1)$. Together with the always-present eigenvalue $1$, this proves star-convexity. At $s=0$ the matrix on the left is $I$, which is not in the specified family; the spectral endpoint $1$ is attained separately, so no closed-parameter extension is being used.

\begin{lemma}\label{lem:product}
For every $\lambda\in\C$ and $\alpha\in[0,1)^n$,
\begin{equation}\label{eq:determinant}
 \det(\lambda I-A_n(\alpha))
   =\prod_{j=1}^n(\lambda-\alpha_j)-\prod_{j=1}^n(1-\alpha_j).
\end{equation}
Consequently, $\lambda$ is an eigenvalue exactly when
\begin{equation}\label{eq:normalized-product}
 \prod_{j=1}^n\frac{\lambda-\alpha_j}{1-\alpha_j}=1.
\end{equation}
\end{lemma}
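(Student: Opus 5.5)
The plan is to expand $\det(\lambda I-A_n(\alpha))$ by the Leibniz formula and show that exactly two permutations contribute. The matrix $B=\lambda I-A_n(\alpha)$ has diagonal entries $\lambda-\alpha_j$, superdiagonal (cyclic) entries $B_{j,j+1}=-(1-\alpha_j)$, and zeros elsewhere. A permutation $\sigma$ gives a nonzero term only if $\sigma(j)\in\{j,j+1\}$ for every $j$ (indices mod $n$). I will argue that such a $\sigma$ is either the identity or the full cyclic shift $j\mapsto j+1$. Indeed, if $\sigma(j)=j+1$ for some $j$, then $j+1$ is already taken as an image, so $\sigma(j+1)\neq j+1$. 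This forces $\sigma(j+1)=j+2$, and by induction around the cycle $\sigma$ is the shift. This is the only step with any content, and it is short.

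Next I compute the two contributions. The identity gives $\prod_j(\lambda-\alpha_j)$. The shift is an $n$-cycle with sign $(-1)^{n-1}$, and it contributes
\[
(-1)^{n-1}\prod_{j=1}^n\bigl(-(1-\alpha_j)\bigr)=(-1)^{2n-1}\prod_{j=1}^n(1-\alpha_j)=-\prod_{j=1}^n(1-\alpha_j).
\]
Adding the two terms yields \eqref{eq:determinant}.

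For the consequence, recall that $\lambda$ is an eigenvalue exactly when the determinant vanishes, which means $\prod_j(\lambda-\alpha_j)=\prod_j(1-\alpha_j)$. Since $\alpha_j<1$ for every $j$, the right-hand product is strictly positive, so dividing by it is legitimate. This gives \eqref{eq:normalized-product}, and the implication reverses, so the two conditions are equivalent. This is the place where the half-open parameter domain $[0,1)^n$ is used: if some $\alpha_j=1$ were allowed, the normalization would fail, consistent with the remark about the closed domain.

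No step is a real obstacle. The only point needing care is the sign bookkeeping for the $n$-cycle, which I check once with $n=2$: there the matrix is $\begin{pmatrix}\lambda-\alpha_1&-(1-\alpha_1)\\-(1-\alpha_2)&\lambda-\alpha_2\end{pmatrix}$, whose determinant matches the formula.
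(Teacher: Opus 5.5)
Your proposal is correct and follows the paper's proof essentially verbatim. You expand by Leibniz, show that a cycle edge in one row forces the cycle edge in every following row so that only the identity and the full $n$-cycle contribute, get a negative cycle term from $(-1)^{n-1}\cdot(-1)^n$, and divide by the strictly positive product $\prod_j(1-\alpha_j)$ to obtain the normalized equation.
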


\begin{proof}
A nonzero permutation term chooses in each row either the diagonal or the next column. Choosing the next column in any row forces the same choice in the following row, and hence all the way around the cycle. The only contributing terms are therefore the diagonal term and the full cycle. The cycle has permutation sign $(-1)^{n-1}$ and entry sign $(-1)^n$, so its contribution is negative. This proves \eqref{eq:determinant}. Division by $\prod_j(1-\alpha_j)>0$ gives \eqref{eq:normalized-product}.
\end{proof}

\begin{lemma}\label{lem:angles}
The map $\Lambda:D\to\Cp$ in \eqref{eq:Lambda} is a homeomorphism, with inverse \eqref{eq:angular-coordinates}. Fix $\lambda=a+\ii b\in\Cp$ and its coordinates $(m,M)$. The map
\[
 \alpha\longmapsto u=\Arg(\lambda-\alpha)
\]
is a continuous strictly increasing bijection from $[0,1)$ to $[m,M)$, with inverse
\begin{equation}\label{eq:inverse-parameter}
 \alpha(u)=a-b\cot u.
\end{equation}
Moreover,
\begin{equation}\label{eq:modulus-function}
 \log\frac{|\lambda-\alpha(u)|}{1-\alpha(u)}
   =f_M(u):=\log\frac{\sin M}{\sin(M-u)},
 \qquad m\leq u<M.
\end{equation}
The function $f_M$ is strictly convex and diverges at the excluded endpoint:
\begin{equation}\label{eq:convexity}
 f'_M(u)=\cot(M-u),\qquad
 f''_M(u)=\csc^2(M-u)>0,\qquad
 \lim_{u\uparrow M}f_M(u)=+\infty.
\end{equation}
\end{lemma}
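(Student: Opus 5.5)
The plan is to prove the statement about $\Lambda$ and the parameter map directly from elementary planar geometry, working in the triangle with vertices $0$, $1$ and $\lambda$.

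\textbf{Step 1: $\Lambda$ is a homeomorphism.} For $\lambda\in\Cp$, the point $\lambda$ lies above the real axis. Hence $m=\Arg\lambda\in(0,\pi)$ and $M=\Arg(\lambda-1)\in(0,\pi)$. The exterior angle of the triangle $0,1,\lambda$ at the vertex $1$ gives $M>m$, with the angle at $\lambda$ equal to $M-m$. The law of sines in this triangle yields
\[
 |\lambda|=\frac{\sin(\pi-M)}{\sin(M-m)}=\frac{\sin M}{\sin(M-m)},
\]
so $\Lambda(m,M)=\lambda$. Conversely, for $(m,M)\in D$ the ray from $0$ at angle $m$ and the ray from $1$ at angle $M$ meet at exactly one point of $\Cp$, and that point is $\Lambda(m,M)$. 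Both $\Lambda$ and $\lambda\mapsto(\Arg\lambda,\Arg(\lambda-1))$ are continuous on the relevant open sets, since $\Arg$ is continuous off the negative real axis and $\lambda,\lambda-1\in\Cp$. This gives the homeomorphism.

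\textbf{Step 2: the parameter map.} As $\alpha$ runs through $[0,1)$, the point $\alpha$ moves along the real segment and $\lambda-\alpha=(a-\alpha)+\ii b$ keeps positive imaginary part $b$. Therefore $\cot u=(a-\alpha)/b$. Since $\cot$ is strictly decreasing on $(0,\pi)$, $u=\operatorname{arccot}((a-\alpha)/b)$ is continuous and strictly increasing in $\alpha$. Its values at the endpoints are $u(0)=m$ and $\lim_{\alpha\uparrow1}u=M$, so the map is a bijection from $[0,1)$ onto $[m,M)$. Solving $\cot u=(a-\alpha)/b$ for $\alpha$ gives the inverse formula \eqref{eq:inverse-parameter}.

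\textbf{Step 3: the modulus identity.} Apply the law of sines to the triangle with vertices $\alpha$, $1$ and $\lambda$. Its angle at $\alpha$ is $\pi-u$, its angle at $1$ is $\pi-M$, and its angle at $\lambda$ is $M-u$. Hence
\[
 \frac{|\lambda-\alpha|}{\sin(\pi-M)}=\frac{1-\alpha}{\sin(M-u)},
\]
which rearranges to \eqref{eq:modulus-function}. The degenerate case $u=m$, $\alpha=0$ is covered by the same formula; it also matches Step 1. Alternatively, one can verify \eqref{eq:modulus-function} algebraically by substituting \eqref{eq:inverse-parameter} together with $a=b\cot M+1$.

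\textbf{Step 4: convexity.} Differentiating $f_M(u)=\log\sin M-\log\sin(M-u)$ gives $f_M'(u)=\cot(M-u)$ and $f_M''(u)=\csc^2(M-u)>0$. As $u\uparrow M$ we have $\sin(M-u)\downarrow0$, so $f_M(u)\to+\infty$.

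The only delicate point is keeping the angle bookkeeping consistent, in particular confirming that the angle at $\lambda$ is $M-u$ and not $u-M$. I would check this once against the explicit cotangent formula.
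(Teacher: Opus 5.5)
Your argument is correct. Steps 2 and 4 match the paper. Steps 1 and 3, however, are synthetic where the paper is purely computational.

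\textbf{How the paper does Steps 1 and 3.} The paper draws no triangle. It writes $\cot m=a/b$ and $\cot M=(a-1)/b$, then solves for $b=\sin m\sin M/\sin(M-m)$ and $a=b\cot m$. This gives both directions of the homeomorphism and the companion identity \eqref{eq:ray-identity} at once. It then obtains \eqref{eq:modulus-function} from $|\lambda-\alpha(u)|=b/\sin u$ and $1-\alpha(u)=b(\cot u-\cot M)$, cancelling $b$ and $\sin u$ via $\cot u-\cot M=\sin(M-u)/(\sin u\sin M)$.

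\textbf{What your route buys and costs.} The law-of-sines version explains why that cancellation must happen. The quotient in \eqref{eq:modulus-function} is a ratio of two sides of the triangle $\alpha,1,\lambda$, so it can depend only on the opposite angles $\pi-M$ and $M-u$. The third side of the triangle $0,1,\lambda$ gives \eqref{eq:ray-identity} just as easily. The price is careful angle bookkeeping, which the cotangent computation avoids entirely.

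\textbf{A slip in Step 3.} The interior angle of the triangle $\alpha,1,\lambda$ at $\alpha$ is $u$, not $\pi-u$; $\pi-u$ is the exterior angle there. With your stated values the three angles sum to $2\pi-2u$, not $\pi$. The slip is harmless: the law-of-sines relation you use involves only the angles at $1$ and $\lambda$, and you have those right. Your exterior-angle check at the vertex $1$, namely $M=u+(M-u)$, already gives the correct value.

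\textbf{A missing line in Step 1.} The converse claim, that the ray from $0$ at angle $m$ and the ray from $1$ at angle $M$ meet in $\Cp$ exactly at $\Lambda(m,M)$, should be justified explicitly. Solving $t\ee^{\ii m}=1+s\ee^{\ii M}$ gives $t=\sin M/\sin(M-m)>0$ and $s=\sin m/\sin(M-m)>0$. This is precisely the paper's formula \eqref{eq:Lambda} together with \eqref{eq:ray-identity}.
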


\begin{proof}
For $\lambda=a+\ii b$ with $b>0$,
\[
 \cot m=\frac{a}{b},\qquad \cot M=\frac{a-1}{b}.
\]
Thus $0<m<M<\pi$ and
\[
 b=\frac{1}{\cot m-\cot M}
   =\frac{\sin m\sin M}{\sin(M-m)},\qquad a=b\cot m.
\]
Conversely, these expressions define $a\in\R$ and $b>0$ for every $(m,M)\in D$, with the required arguments. They prove \eqref{eq:Lambda} and continuity in both directions, and give the companion identity
\begin{equation}\label{eq:ray-identity}
 \Lambda(m,M)-1=\frac{\sin m}{\sin(M-m)}\ee^{\ii M}.
\end{equation}

For the parameter map, $\cot u=(a-\alpha)/b$ gives \eqref{eq:inverse-parameter}. The derivative of this inverse is $b\csc^2u>0$, and its endpoints are $\alpha(m)=0$ and $\lim_{u\uparrow M}\alpha(u)=1$. Furthermore,
\[
 |\lambda-\alpha(u)|=\frac{b}{\sin u},\qquad
 1-\alpha(u)=b(\cot u-\cot M).
\]
Using
\[
 \cot u-\cot M=\frac{\sin(M-u)}{\sin u\sin M}
\]
cancels both $b$ and $\sin u$, giving \eqref{eq:modulus-function}. Differentiation proves \eqref{eq:convexity}.
\end{proof}

The cancellation in \eqref{eq:modulus-function} is the reason for these coordinates. For fixed $M$, the modulus function has no further dependence on $\lambda$; the other angle $m$ enters only through the allowed interval $[m,M)$.

For $1\leq k\leq K$, define
\begin{equation}\label{eq:feasible-set}
 P_k(m,M)=\left\{u\in[m,M)^n:\sum_{j=1}^n u_j=2\pi k\right\},
 \qquad \Psi_M(u)=\sum_{j=1}^n f_M(u_j).
\end{equation}
We call $k$ the \emph{argument branch}. It records the integer multiple of $2\pi$ accumulated by the arguments of the factors in \eqref{eq:normalized-product}, not a choice of logarithm for an individual factor.

The argument branch also describes the phase of an eigenvector around the cycle. Let $f\ne0$ satisfy $A_n(\alpha)f=\lambda f$ with $\lambda\in\Cp$. The row equations are
\[
 (1-\alpha_j)f_{j+1}=(\lambda-\alpha_j)f_j.
\]
If one component vanished, these equations would force every component to vanish. Thus all ratios below are defined, and
\[
 \begin{gathered}
 \frac{f_{j+1}}{f_j}=\frac{\lambda-\alpha_j}{1-\alpha_j},
 \qquad \Arg\frac{f_{j+1}}{f_j}=u_j\in(0,\pi),\\
 \frac1{2\pi}\sum_{j=1}^n\Arg\frac{f_{j+1}}{f_j}=k.
 \end{gathered}
\]
The last identity says that the eigenvector accumulates $k$ full turns of phase as the stages are traversed. Since $1\leq k\leq K<n/2$, it agrees with the wrapped winding number in \cite[Eq.~(2)]{KolchinskyOhgaIto2026}. Uniformization preserves the eigenvector, so the same interpretation holds for the corresponding generator mode. Here $k$ belongs to a fixed matrix and eigenpair. By contrast, $\kappa(M)$ in \cref{cor:direct-membership} selects a branch sufficient to decide membership in the union. This selection does not assert that every matrix realizing the same $\lambda$ has winding number $\kappa(M)$.

\begin{proposition}\label{prop:feasibility}
For $\lambda=\Lambda(m,M)\in\Cp$,
\begin{equation}\label{eq:exact-feasibility}
 \lambda\in\Sigma_n
 \quad\Longleftrightarrow\quad
 \text{there are }1\leq k\leq K\text{ and }u\in P_k(m,M)
 \text{ with }\Psi_M(u)=0.
\end{equation}
Every feasible vector gives an admissible realizing matrix by \eqref{eq:inverse-parameter}.
\end{proposition}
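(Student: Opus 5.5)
The plan is to prove both directions of \eqref{eq:exact-feasibility} directly from \cref{lem:product} and \cref{lem:angles}. The point is that the factor $(\lambda-\alpha_j)/(1-\alpha_j)$ in \eqref{eq:normalized-product} has a positive real denominator. Its polar form is therefore completely described by the single angle $u_j=\Arg(\lambda-\alpha_j)$: the argument is $u_j$ and the logarithmic modulus is $f_M(u_j)$.

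For the forward direction, let $\lambda=\Lambda(m,M)\in\Cp$ be an eigenvalue of some $A_n(\alpha)$ with $\alpha\in[0,1)^n$, and set $u_j=\Arg(\lambda-\alpha_j)$. By \cref{lem:angles}, each $u_j$ lies in $[m,M)$, and by \eqref{eq:modulus-function},
\[
 \frac{\lambda-\alpha_j}{1-\alpha_j}=\exp\bigl(f_M(u_j)\bigr)\,\ee^{\ii u_j}.
\]
Multiplying these identities and using \eqref{eq:normalized-product} gives
\[
 \exp\bigl(\Psi_M(u)\bigr)\,\ee^{\ii\sum_j u_j}=1.
\]
Comparing moduli yields $\Psi_M(u)=0$. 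Comparing arguments yields $\sum_j u_j=2\pi k$ for some integer $k$. The only thing left to check is the range of $k$, and this is where I expect the small amount of care to be needed. Since $0<m\leq u_j<M<\pi$, we have $0<\sum_j u_j<n\pi$, so $0<2\pi k<n\pi$. Hence $k\geq1$ and $k<n/2$, that is, $k\leq\lfloor(n-1)/2\rfloor=K$. This gives $u\in P_k(m,M)$ with $\Psi_M(u)=0$.

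For the converse, take $1\leq k\leq K$ and $u\in P_k(m,M)$ with $\Psi_M(u)=0$, and define $\alpha_j=\alpha(u_j)=a-b\cot u_j$ by \eqref{eq:inverse-parameter}. By \cref{lem:angles}, each $\alpha_j$ lies in $[0,1)$, so $A_n(\alpha)\in\Cn$ is admissible; in particular, no cycle edge is deleted. Moreover, $\Arg(\lambda-\alpha_j)=u_j$, and the logarithmic modulus of the $j$-th normalized factor is $f_M(u_j)$. The product in \eqref{eq:normalized-product} therefore has modulus $\exp(\Psi_M(u))=1$ and argument $\sum_j u_j=2\pi k$, so it equals $1$. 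By \cref{lem:product}, $\lambda$ is an eigenvalue of $A_n(\alpha)$, and this same construction proves the final sentence of the proposition.

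None of the steps is a genuine obstacle. The only points requiring attention are two. First, the winding integer must be bounded by $K$; this uses the strict inequality $M<\pi$ together with $u_j<M$. Second, the half-open interval $[m,M)$ must correspond exactly to the admissible range $[0,1)$ of the parameters; this is supplied by the bijection in \cref{lem:angles}, so that no closed-parameter limit enters the argument.
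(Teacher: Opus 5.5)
Your proposal is correct and follows essentially the same route as the paper. Both write each normalized factor in polar form via the angle coordinates, compare modulus and argument to get $\Psi_M(u)=0$ and $\sum_j u_j=2\pi k$, bound $k$ using $0<\sum_j u_j<n\pi$, and use the bijection $[m,M)\to[0,1)$ of the inverse parameter map for the converse and the realization claim.
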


\begin{proof}
For the angle corresponding to $\alpha_j$, \eqref{eq:modulus-function} yields
\[
 \frac{\lambda-\alpha_j}{1-\alpha_j}
   =\exp\bigl(f_M(u_j)+\ii u_j\bigr).
\]
Their product is $1$ exactly when $\sum_j f_M(u_j)=0$ and $\sum_j u_j$ is an integer multiple of $2\pi$. Each $u_j\in(0,\pi)$, so the admissible multiples are precisely $2\pi k$ with $1\leq k\leq K$. Conversely, the bijection in \cref{lem:angles} sends every $u_j\in[m,M)$ to $\alpha_j\in[0,1)$. The same identity then recovers \eqref{eq:normalized-product}. Thus the reverse implication constructs an actual matrix, not merely a limit of matrices.
\end{proof}

\section{The exact range of the convex sum}\label{sec:range}

Fix an argument branch and abbreviate $\theta=\theta_k$. Its feasible set is nonempty precisely when
\begin{equation}\label{eq:nonempty}
 m\leq\theta<M.
\end{equation}
Necessity follows by averaging the coordinates, and sufficiency follows from the equal-angle vector $(\theta,\ldots,\theta)$. Set
\begin{equation}\label{eq:U}
 U=U_k(m)=2\pi k-(n-1)m.
\end{equation}
This is the largest value one coordinate can have when all other coordinates remain at their lower bound $m$, before imposing the upper restriction $u_j<M$.

\begin{proposition}[Range on an argument branch]\label{prop:range}
Assume \eqref{eq:nonempty}. The minimum is
\begin{equation}\label{eq:minimum}
 \min_{u\in P_k(m,M)}\Psi_M(u)=nf_M(\theta_k),
\end{equation}
and is attained only at the equal-angle vector. If $U<M$, then
\begin{equation}\label{eq:maximum}
 \max_{u\in P_k(m,M)}\Psi_M(u)=(n-1)f_M(m)+f_M(U).
\end{equation}
The maximum is attained precisely at the permutations of $(U,m,\ldots,m)$; when $U=m$, these coincide with the unique feasible vector. If $U\geq M$, the supremum is $+\infty$. More precisely,
\begin{equation}\label{eq:complete-range}
 \Psi_M(P_k(m,M))=
 \begin{cases}
 [nf_M(\theta_k),(n-1)f_M(m)+f_M(U)],&U<M,\\
 [nf_M(\theta_k),+\infty),&U\geq M.
 \end{cases}
\end{equation}
\end{proposition}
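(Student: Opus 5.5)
The plan is to treat $\Psi_M$ as a strictly convex, separable function on the polytope-like set $P_k(m,M)$, which is the simplex $\{u\in[m,\infty)^n:\sum u_j=2\pi k\}$ cut by the open constraints $u_j<M$. Strict convexity of $f_M$ on $[m,M)$ comes from \eqref{eq:convexity}, and \eqref{eq:nonempty} gives nonemptiness.

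\textbf{Minimum.} Jensen's inequality gives, for every $u\in P_k(m,M)$,
\[
\Psi_M(u)\geq nf_M\Bigl(\tfrac1n\sum_j u_j\Bigr)=nf_M(\theta_k).
\]
Strictness of $f_M''>0$ gives equality only when all $u_j$ agree, and the fixed sum forces this common value to be $\theta_k$. This point lies in $P_k(m,M)$ because $m\leq\theta_k<M$.

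\textbf{Finite maximum ($U<M$).} Let $S=\{u\in[m,\infty)^n:\sum u_j=2\pi k\}$. This is a simplex whose vertices are the permutations of $(U,m,\ldots,m)$. If $U<M$, every coordinate of a point of $S$ is at most $U<M$, so $P_k(m,M)=S$ is compact. A convex function on a simplex attains its maximum at a vertex, and all vertices give the value $(n-1)f_M(m)+f_M(U)$.

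For the ``precisely'' claim, write any $u\in S$ as a convex combination of vertices. If $u$ is not a vertex, either two vertices carry positive weight, or $U=m$ (then $S$ is a single point and there is nothing to prove). Suppose $u=\sum t_iv_i$ with at least two distinct vertices weighted. Strict convexity in the first differing coordinate gives $\Psi_M(u)<\sum t_i\Psi_M(v_i)=\max$. The coordinatewise argument is the only mildly delicate point. I would handle it by noting that the coordinates $u_j=\sum_i t_i(v_i)_j$ are nontrivial averages of $U$ and $m$ whenever $U>m$ and $u$ is not a vertex. Then strict convexity of each $f_M$ gives strict inequality in at least one coordinate, and weak inequality in all the others.

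\textbf{Unbounded case ($U\geq M$).} The point $(U,m,\ldots,m)$ lies in $S$ but is not admissible when $U\geq M$. Take $u(t)=(t,\,s(t),\ldots,s(t))$ with $s(t)=(2\pi k-t)/(n-1)$ and $t\uparrow M$. We need $s(t)\geq m$, i.e.\ $t\leq U$, which holds since $t<M\leq U$. We also need $s(t)<M$. Since $s(t)<t$ iff $t>\theta_k$, this holds for $t$ near $M>\theta_k$. Along this path $\Psi_M\geq f_M(t)+(n-1)\min f_M\to+\infty$ by \eqref{eq:convexity}.

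\textbf{Intermediate values.} $P_k(m,M)$ is convex, hence connected, and $\Psi_M$ is continuous. So the image is an interval containing the minimum and either the attained maximum or arbitrarily large values, which yields \eqref{eq:complete-range}.

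The main obstacle is only the bookkeeping: checking that the simplex lies inside the half-open box when $U<M$, and that the escaping path stays feasible when $U\geq M$.
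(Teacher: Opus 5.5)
Your proposal is correct and follows essentially the same route as the paper: Jensen's inequality for the minimum, identification of $P_k(m,M)$ with the compact simplex with vertices the permutations of $(U,m,\ldots,m)$ when $U<M$, a feasible path with one coordinate tending to $M$ when $U\geq M$, and connectedness of the convex feasible set for the intermediate values. Your coordinatewise argument for uniqueness of the maximizers, writing each $u_j=t_jU+(1-t_j)m$ with barycentric weights $t_j$, simply makes explicit the paper's one-line appeal to strict convexity.
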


\begin{proof}
Jensen's inequality and strict convexity give
\[
 \frac1n\Psi_M(u)\geq f_M\left(\frac1n\sum_j u_j\right)=f_M(\theta_k),
\]
with equality exactly when all coordinates agree. The equal-angle vector is feasible, so the lower endpoint is attained.

Suppose first that $U<M$. Write $u_j=m+y_j$. The constraints become
\[
 y_j\geq0,\qquad \sum_j y_j=U-m.
\]
Since each $y_j\leq U-m<M-m$, the excluded upper endpoint imposes no additional restriction. Thus the feasible set is the compact simplex whose vertices are the permutations of $(U,m,\ldots,m)$. Every point is a convex combination of these vertices. Convexity bounds its value by the corresponding combination of vertex values, and those values are all equal. This proves \eqref{eq:maximum}. Strict convexity excludes a nonvertex maximizer when $U>m$. If $U=m$, the simplex consists of the equal-angle vector and both formulas coincide.

Suppose instead that $U\geq M$. Let $u_n=M-\varepsilon$ and put
\[
 u_1=\cdots=u_{n-1}=\frac{2\pi k-M+\varepsilon}{n-1}.
\]
As $\varepsilon\downarrow0$, the latter coordinates tend to
\[
 v=\frac{2\pi k-M}{n-1}\in[m,M).
\]
Here $v\geq m$ is exactly $U\geq M$, while $v<M$ follows from $2\pi k<nM$. Therefore the vectors are feasible for all sufficiently small positive $\varepsilon$. The last term $f_M(u_n)$ tends to $+\infty$, and the other terms have finite limits.

Finally, $P_k(m,M)$ is convex, hence connected, and $\Psi_M$ is continuous. Its image is an interval. In the compact case both endpoints just computed are attained. In the other case the minimum is attained and the image is unbounded above. This proves \eqref{eq:complete-range}.
\end{proof}

When $U<M$, all excess argument can be concentrated in one coordinate before reaching the forbidden parameter $\alpha=1$. When $U\geq M$, one coordinate can approach that endpoint while the total argument remains fixed, and the logarithmic modulus becomes arbitrarily large. No endpoint at $\alpha=1$ is needed to obtain any finite value of the sum.

In the compact case, write the maximum as
\begin{equation}\label{eq:g}
 \begin{split}
 g_k(m,M)={}&n\log\sin M-(n-1)\log\sin(M-m)\\
           &-\log\sin(M-U_k(m)),\qquad M>U_k(m).
 \end{split}
\end{equation}
All logarithms here have positive arguments. The minimum condition has a particularly simple form:
\begin{equation}\label{eq:jensen-angle}
 f_M(\theta_k)\leq0\quad\Longleftrightarrow\quad M\geq\phi_k.
\end{equation}
Indeed, for $\theta_k<M<\pi$,
\[
 \sin(M-\theta_k)-\sin M=-2\sin(\theta_k/2)\cos(M-\theta_k/2),
\]
which is nonnegative precisely when $M\geq(\pi+\theta_k)/2$.

\begin{corollary}\label{cor:branchwise}
A point $\lambda=\Lambda(m,M)\in\Cp$ belongs to $\Sigma_n$ exactly when at least one $k\in\{1,\ldots,K\}$ satisfies
\begin{equation}\label{eq:branchwise}
 m\leq\theta_k,\qquad M\geq\phi_k,\qquad
 \begin{cases}
 \text{no further condition},&M\leq U_k(m),\\
 g_k(m,M)\geq0,&M>U_k(m).
 \end{cases}
\end{equation}
\end{corollary}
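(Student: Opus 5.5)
This follows by combining \cref{prop:feasibility} with \cref{prop:range}. By \cref{prop:feasibility}, $\lambda=\Lambda(m,M)\in\Sigma_n$ exactly when, for some branch $1\leq k\leq K$, the value $0$ lies in $\Psi_M(P_k(m,M))$. We then read off, branch by branch, when $0$ belongs to the interval given in \eqref{eq:complete-range}.

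Fix $k$. First, $P_k(m,M)$ must be nonempty. By \eqref{eq:nonempty} this means $m\leq\theta_k<M$. Next, $0$ lies in the image exactly when two things hold: the lower endpoint satisfies $nf_M(\theta_k)\leq0$, and, in the compact case $U_k(m)<M$, the upper endpoint satisfies $(n-1)f_M(m)+f_M(U_k(m))\geq0$. By \eqref{eq:jensen-angle}, the lower-endpoint condition is $M\geq\phi_k$, valid for $\theta_k<M<\pi$. Expanding $f_M$ shows that the upper endpoint equals $g_k(m,M)$ in \eqref{eq:g}. If $U_k(m)\geq M$, the range is unbounded above, so no further condition is needed. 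These are precisely the two cases in \eqref{eq:branchwise}.

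It remains to reconcile the inequality $\theta_k<M$ from \eqref{eq:nonempty} with the stated conditions. Since $\phi_k=(\pi+\theta_k)/2>\theta_k$ because $\theta_k<\pi$, the condition $M\geq\phi_k$ already implies $M>\theta_k$. So the strict inequality is absorbed, and the hypotheses of \eqref{eq:jensen-angle} hold automatically. Conversely, \eqref{eq:branchwise} gives nonemptiness and both endpoint conditions. Connectedness, already built into \eqref{eq:complete-range}, then provides some $u\in P_k(m,M)$ with $\Psi_M(u)=0$, and hence membership.

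The only delicate point is the bookkeeping at the case boundary $M=U_k(m)$. Here \eqref{eq:complete-range} places this equality case in the unbounded regime, which matches the ``$M\leq U_k(m)$'' clause of \eqref{eq:branchwise}. The case of $g_k$ is used only when $M>U_k(m)$, where all of its logarithms are defined. No other obstacle is expected.
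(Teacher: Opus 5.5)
Your proof is correct and follows the paper's argument. You combine \cref{prop:feasibility,prop:range}, read the lower endpoint through \eqref{eq:jensen-angle}, identify the finite upper endpoint with $g_k$, and absorb $\theta_k<M$ into $M\geq\phi_k$ using $\phi_k>\theta_k$, exactly as the paper does; your explicit check that the equality case $M=U_k(m)$ falls in the unbounded regime of \eqref{eq:complete-range} is consistent with the paper's bookkeeping.
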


\begin{proof}
By \cref{prop:feasibility,prop:range}, zero must lie in one of the intervals \eqref{eq:complete-range}. Equation~\eqref{eq:jensen-angle} gives its lower-endpoint condition, and \eqref{eq:g} gives its finite upper-endpoint condition. Since $\phi_k>\theta_k$, the requirement $M\geq\phi_k$ already ensures $\theta_k<M$ in \eqref{eq:nonempty}.
\end{proof}

The set $P_k(m,M)$ is convex, but its subset satisfying $\Psi_M=0$ need not be convex. For example, if the finite maximum is zero and $U_k(m)>m$, \cref{prop:range} says that the zero set consists exactly of the distinct vertices obtained by permuting $(U_k(m),m,\ldots,m)$. Strict convexity makes the value at the midpoint of two such vertices strictly negative. What makes the criterion exact is the interval $\Psi_M(P_k(m,M))$, whose endpoints have been determined and whose connectedness supplies every intermediate value.

\section{Angular sectors and the two extremal families}\label{sec:sectors}

We now turn the implicit inequality $g_k\geq0$ into a graph. We establish monotonicity from the logarithmic expression first, and recover the polynomial equation afterwards.

\subsection{The upper edge is a unique decreasing graph}

\begin{proposition}\label{prop:graph}
For $\gamma_k<m\leq\theta_k$, the equation $g_k(m,M)=0$ has a unique solution
\[
 U_k(m)<M<\pi.
\]
Denote it by $\beta_k(m)$. It extends continuously to $[\gamma_k,\theta_k]$, is strictly decreasing there, and satisfies
\begin{equation}\label{eq:beta-endpoints}
 \beta_k(\gamma_k)=\pi,\qquad \beta_k(\theta_k)=\phi_k.
\end{equation}
In particular, $\beta_k(m)\geq\phi_k$. On its open parameter interval it is continuously differentiable, and
\begin{equation}\label{eq:implicit-beta-derivative}
 \beta'_k(m)=-\frac{\partial_mg_k(m,\beta_k(m))}
                         {\partial_Mg_k(m,\beta_k(m))}<0.
\end{equation}
\end{proposition}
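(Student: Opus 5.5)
The plan is to fix $m$ and study $M\mapsto g_k(m,M)$ on the interval $(U_k(m),\pi)$. I will show that this function is strictly decreasing and runs from $+\infty$ down to $-\infty$, which gives existence and uniqueness of the root $\beta_k(m)$ together. The implicit function theorem and a sign computation for $\partial_m g_k$ then give smoothness and monotonicity. The endpoint values come from \eqref{eq:jensen-angle} at $m=\theta_k$ and from a squeeze at $m=\gamma_k$.

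\emph{Range of $U=U_k(m)$ and limits.} For $\gamma_k<m\leq\theta_k$ we have $m\leq U<\pi$. Indeed, $U\geq m$ is equivalent to $nm\leq 2\pi k$, and $U<\pi$ is equivalent to $m>(2\pi k-\pi)/(n-1)=\gamma_k$. For $M\in(U,\pi)$ the three arguments satisfy $0<M-U\leq M-m<M<\pi$, so every logarithm in \eqref{eq:g} is defined.
\begin{itemize}
\item As $M\downarrow U$, the term $-\log\sin(M-U)$ tends to $+\infty$ while the other terms stay bounded, so $g_k\to+\infty$.
\item As $M\uparrow\pi$, $n\log\sin M\to-\infty$, while $\sin(M-m)\to\sin m>0$ and $\sin(M-U)\to\sin U>0$. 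Hence $g_k\to-\infty$.
\end{itemize}

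\emph{Monotonicity in $M$.} Differentiation gives
\[
 \partial_M g_k=n\cot M-(n-1)\cot(M-m)-\cot(M-U).
\]
Cotangent is strictly decreasing on $(0,\pi)$ and $M>M-m\geq M-U$, so $\cot M<\cot(M-m)\leq\cot(M-U)$. Therefore $\partial_M g_k<n\cot M-n\cot M=0$ throughout the domain. So $g_k(m,\cdot)$ is strictly decreasing, and by the intermediate value theorem it has exactly one zero $\beta_k(m)\in(U,\pi)$.

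\emph{Smoothness and derivative.} The function $g_k$ is $C^1$ on the open set $\{U_k(m)<M<\pi\}$, and $\partial_M g_k\neq0$ there. The implicit function theorem therefore makes $\beta_k$ continuously differentiable and yields \eqref{eq:implicit-beta-derivative}. Since $U_k'(m)=-(n-1)$,
\[
 \partial_m g_k=(n-1)\bigl(\cot(M-m)-\cot(M-U)\bigr).
\]
For $m<\theta_k$ we have $U>m$, so $M-U<M-m$ and $\partial_m g_k<0$. Both partial derivatives are then negative, which gives $\beta_k'<0$ on $(\gamma_k,\theta_k)$.

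\emph{Endpoint $m=\theta_k$.} The same implicit-function argument applies, so $\beta_k$ is continuous there. Since $U=\theta_k$, we get $g_k(\theta_k,M)=-nf_M(\theta_k)$. By \eqref{eq:jensen-angle} its unique zero with $M>\theta_k$ is $M=\phi_k$.

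\emph{Endpoint $m=\gamma_k$.} This is the one step that is not routine, because $g_k$ degenerates there and there is no equation to solve. Instead I squeeze: $U_k(m)<\beta_k(m)<\pi$ and $U_k(m)\to\pi$ as $m\downarrow\gamma_k$. Hence $\beta_k(m)\to\pi$, which matches the definition $\beta_k(\gamma_k)=\pi$ and gives continuity at that endpoint.

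\emph{Conclusion.} Strict decrease on the closed interval follows from $\beta_k'<0$ on the interior together with continuity at both ends. Combined with $\beta_k(\theta_k)=\phi_k$, this gives $\beta_k\geq\phi_k$.
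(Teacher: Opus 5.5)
Your proposal is correct and follows essentially the same route as the paper: you show $g_k(m,\cdot)$ decreases strictly from $+\infty$ to $-\infty$ on $(U_k(m),\pi)$ via the cotangent comparison, use the implicit function theorem with $\partial_m g_k<0$ in the interior, reduce explicitly at $m=\theta_k$, and squeeze $U_k(m)<\beta_k(m)<\pi$ at $m=\gamma_k$. Two harmless slips: at $m=\theta_k$ one has $g_k(\theta_k,M)=nf_M(\theta_k)$ rather than $-nf_M(\theta_k)$, and there the term $-(n-1)\log\sin(M-m)$ also tends to $+\infty$ as $M\downarrow U$ instead of staying bounded; neither affects the limit or the location of the zero at $\phi_k$.
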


\begin{proof}
Fix $\gamma_k<m<\theta_k$ and write $U=U_k(m)$. The definitions give $m<U<\pi$. On $U<M<\pi$,
\begin{align}
 \partial_Mg_k
 &=n\cot M-(n-1)\cot(M-m)-\cot(M-U)<0,\label{eq:g-M}\\
 \partial_mg_k
 &=(n-1)\bigl[\cot(M-m)-\cot(M-U)\bigr]<0.\label{eq:g-m}
\end{align}
For the first sign, both $M-m$ and $M-U$ are smaller than $M$, and cotangent is strictly decreasing on $(0,\pi)$. For the second, $M-U<M-m$. Thus every comparison is within a single interval on which cotangent is monotone.

As $M\downarrow U$, the last term in \eqref{eq:g} tends to $+\infty$ and the others stay finite. As $M\uparrow\pi$, the first term tends to $-\infty$ and the others stay finite. Continuity and \eqref{eq:g-M} therefore give a unique zero. The implicit function theorem and \eqref{eq:g-m} give \eqref{eq:implicit-beta-derivative}.

At $m=\theta_k$, one has $U=m$, so
\[
 g_k(\theta_k,M)=n\log\frac{\sin M}{\sin(M-\theta_k)}.
\]
This is strictly decreasing from $+\infty$ to $-\infty$ on $(\theta_k,\pi)$, with its unique zero at $M=\phi_k$. The same implicit function argument at $(\theta_k,\phi_k)$ gives continuity from the left. At the other endpoint,
\[
 U_k(m)<\beta_k(m)<\pi,\qquad U_k(m)\longrightarrow\pi\quad(m\downarrow\gamma_k),
\]
so $\beta_k(m)\to\pi$. Strict decrease in the interior and these endpoint limits prove strict decrease on the whole closed interval, as well as $\beta_k(m)\geq\phi_k$.
\end{proof}

The graph just constructed will be identified with \eqref{eq:beta-definition} in \cref{prop:one-loop}. Define the $k$th sector by its vertical sections:
\begin{equation}\label{eq:vertical-sections}
 \Omega_{n,k}(m):=\{M:(m,M)\in\Omega_{n,k}\}
 =\begin{cases}
 [\phi_k,\pi),&0<m\leq\gamma_k,\\
 [\phi_k,\beta_k(m)],&\gamma_k<m\leq\theta_k,\\
 \varnothing,&m>\theta_k.
 \end{cases}
\end{equation}
These sections lie in $D$, since $m\leq\theta_k<\phi_k$. If $m\leq\gamma_k$, then $U_k(m)\geq\pi$, so the unbounded-supremum alternative in \eqref{eq:branchwise} always applies. If $\gamma_k<m\leq\theta_k$, that alternative covers $M\leq U_k(m)$, and strict decrease of $g_k$ covers exactly $U_k(m)<M\leq\beta_k(m)$. Hence \cref{cor:branchwise} gives
\begin{equation}\label{eq:sector-union}
 \Sigma_n\cap\Cp=\Lambda(\Omega_n),
 \qquad \Omega_n=\bigcup_{k=1}^K\Omega_{n,k}.
\end{equation}
This proves the complete branch geometry before the polynomial equation for the arcs is introduced.

\subsection{Why the lower edge is a uniform chord}\label{sec:uniform}

On $M=\phi_k$, the minimum in \eqref{eq:minimum} is zero. Strict convexity forces every feasible zero on that argument branch to have $u_1=\cdots=u_n=\theta_k$. The inverse map \eqref{eq:inverse-parameter} then makes all loop parameters equal.

Let $P_n$ be the cyclic permutation matrix and define
\begin{equation}\label{eq:uniform-family}
 \Un(q)=(1-q)I+qP_n=A_n(1-q,\ldots,1-q),\qquad 0<q\leq1.
\end{equation}
Its eigenvalues are $1-q+q\ee^{2\pi\ii j/n}$, $0\leq j<n$. In particular, the $k$th upper branch traces the chord from $1$ to $\omega_k$. The endpoint $1$ is its limit as $q\downarrow0$, but is also the Perron eigenvalue of every admissible matrix.

The chord parameter can be read directly from the angular coordinates. Since
\[
 \omega_k-1=2\sin(\theta_k/2)\ee^{\ii\phi_k},
\]
identity~\eqref{eq:ray-identity} gives, for $0<m\leq\theta_k$,
\begin{equation}\label{eq:chord-parameter}
 \begin{gathered}
 \Lambda(m,\phi_k)=1-q_k(m)+q_k(m)\omega_k,\\
 q_k(m)=\frac{\sin m}{2\sin(\theta_k/2)\sin(\phi_k-m)}\in(0,1].
 \end{gathered}
\end{equation}
The range follows also from the fact that $\sin m/\sin(\phi_k-m)$ increases from $0$ to $|\omega_k-1|$. Equivalently, the parameter is $q=(\lambda-1)/(\omega_k-1)$. Thus the entire lower edge is realized, regardless of which part remains visible after taking the union of sectors.

\subsection{Why the upper edge has only one loop}

On $M=\beta_k(m)$, the finite maximum is zero. Its vertex vector is
\[
 (U_k(m),m,\ldots,m).
\]
Under \eqref{eq:inverse-parameter}, every coordinate equal to $m$ becomes a zero loop parameter. This identifies the second extremal family without having to guess a boundary polynomial.

\begin{proposition}[One-loop realization and polynomial parametrization]\label{prop:one-loop}
For $\gamma_k<m\leq\theta_k$, put
\begin{equation}\label{eq:zeta-k}
 \zeta_k(m)=\Lambda(m,\beta_k(m)).
\end{equation}
Then $\zeta_k(m)$ is an eigenvalue of $\AL(\alpha)=A_n(\alpha,0,\ldots,0)$ for a parameter $\alpha\in[0,1)$, namely
\begin{equation}\label{eq:alpha-k}
 \alpha_k(m)=\frac{\zeta_k(m)^n-1}{\zeta_k(m)^{n-1}-1}.
\end{equation}
Moreover, $\rho_k(m)=|\zeta_k(m)|$ satisfies \eqref{eq:radial-polynomial}. It extends continuously with $\rho_k(\gamma_k)=0$ and $\rho_k(\theta_k)=1$, and is the unique solution of that equation in $[0,1]$.
\end{proposition}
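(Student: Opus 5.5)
Fix $\gamma_k<m\leq\theta_k$ and write $M=\beta_k(m)$, $U=U_k(m)$, $\lambda=\zeta_k(m)=\Lambda(m,M)$. Since $M>U$ and $g_k(m,M)=0$, \cref{prop:range} puts the value zero at the vertex $(U,m,\ldots,m)\in P_k(m,M)$, so $\Psi_M$ vanishes there. By \cref{prop:feasibility} and \eqref{eq:inverse-parameter}, this vector yields an admissible matrix with $\lambda$ as eigenvalue. Each coordinate equal to $m$ maps to $\alpha(m)=0$, and the coordinate $U\in[m,M)$ maps to some $\alpha\in[0,1)$. After a cyclic relabelling, which does not change the spectrum, this matrix is $\AL(\alpha)$. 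By \cref{lem:product}, its characteristic equation reads $(\lambda-\alpha)\lambda^{n-1}=1-\alpha$, that is, $\lambda^n-1=\alpha(\lambda^{n-1}-1)$. Because $\lambda\in\Cp$ and $|\lambda|\leq1$, we have $\lambda^{n-1}\neq1$ unless $\lambda$ is an $(n-1)$th root of unity. If $\lambda^{n-1}=1$, the equation would force $\lambda^n=1$ and hence $\lambda=1$, which is impossible. Dividing then gives \eqref{eq:alpha-k}.

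For the radial equation, I would write $\lambda=\rho\ee^{\ii m}$ and use the arguments directly. The factor $\lambda-\alpha$ has argument $U=2\pi k-(n-1)m$, so $(\lambda-\alpha)\lambda^{n-1}=1-\alpha>0$ is consistent. Eliminating $\alpha$, I would take the imaginary part of $\lambda^n-1=\alpha(\lambda^{n-1}-1)$ after multiplying by $\overline{(\lambda^{n-1}-1)}$, which gives $H_n(\lambda)=0$. A cleaner route is to use $\Imag\bigl((\lambda-\alpha)\ee^{-\ii U}\bigr)=0$ together with $\lambda^{n-1}(\lambda-\alpha)=1-\alpha$. Writing $\lambda-\alpha=r\ee^{\ii U}$ with $r>0$ gives $\rho^{n-1}r=1-\alpha$. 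Taking imaginary parts of $\lambda-\alpha=r\ee^{\ii U}$ gives $\rho\sin m=r\sin U$. Taking real parts and substituting $\alpha=1-\rho^{n-1}r$ gives $\rho\cos m-1+\rho^{n-1}r=r\cos U$. Eliminating $r=\rho\sin m/\sin U$ and multiplying by $\sin U$ produces
\[
 \rho\cos m\sin U-\sin U+\rho^n\sin m=\rho\sin m\cos U.
\]
Since $U-m=2\pi k-nm$, this is $\rho^n\sin m+\rho\sin(2\pi k-nm)=\sin(2\pi k-(n-1)m)$, which is exactly \eqref{eq:radial-polynomial}. If $\sin U=0$, which happens only at $m=\gamma_k$, the identity still holds by continuity.

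For the endpoints, the case $m=\theta_k$ gives $\beta_k=\phi_k$, and $\Lambda(\theta_k,\phi_k)=\omega_k$ by \eqref{eq:chord-parameter} with $q=1$, so $\rho=1$. As $m\downarrow\gamma_k$, we have $\beta_k\to\pi$, and \eqref{eq:Lambda} gives $|\Lambda|=\sin M/\sin(M-m)\to0$ because $m$ stays bounded away from $0$ and $\pi$. Hence $\rho_k$ extends continuously with $\rho_k(\gamma_k)=0$, and $0$ indeed solves the equation at $\gamma_k$ since $\sin(2\pi k-(n-1)\gamma_k)=\sin\pi=0$.

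The main obstacle is uniqueness of the solution in $[0,1]$. I would study $F(\rho)=\rho^n\sin m+\rho\sin(U-m)-\sin U$ on $[0,1]$, with $\sin m>0$. On $(\gamma_k,\theta_k]$ we have $U\in[m,\pi)$, so $\sin U>0$ there, and $F(0)=-\sin U<0$ for $m>\gamma_k$. Moreover, $F(1)=\sin m+\sin(U-m)-\sin U$ equals $4\sin(m/2)\sin((U-m)/2)\sin(U/2)$ by the sum-to-product identity for three angles with $m+(U-m)=U$, so $F(1)\geq0$. Since $F$ is convex in $\rho$ when $\sin m>0$, and $F(0)<0$, there is exactly one zero in $(0,1]$: a convex function that is negative at $0$ crosses zero at most once afterwards. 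At $m=\gamma_k$ we have $F(0)=0$. Convexity with $F(1)\geq0$ then requires a separate check that $F'(0)=\sin(U-m)$ has the right sign; here $U-m=\pi-\gamma_k\in(0,\pi)$, so $F'(0)>0$ and $F(\rho)>0$ for $\rho>0$. This makes $0$ the unique solution, which completes the proof.
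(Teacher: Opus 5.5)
Your proof is correct. The realization step is the same as the paper's: the vertex $(U,m,\ldots,m)$ is used, its $m$-coordinates become zero loops, and $\alpha$ is solved for after ruling out $\lambda^{n-1}=1$. The last two steps, however, are carried out differently.

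For the radial equation, you eliminate $r=|\lambda-\alpha|$ from the real and imaginary parts of $\lambda-\alpha=r\ee^{\ii U}$ together with $\rho^{n-1}r=1-\alpha$. The paper instead uses the identity $\sin U-\rho\sin(U-m)=\sin m\sin(M-U)/\sin(M-m)$ to write $\ee^{g_k(m,M)}=\rho^n\sin m/\bigl(\sin U-\rho\sin(U-m)\bigr)$, and then sets $g_k=0$. Your elimination is self-contained on the arc. The paper's formula is valid for every $M>U_k(m)$, including points off the graph, and it reuses exactly this in \cref{prop:carrier} to turn $g_k\geq0$ into $H_n\geq0$.

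For uniqueness, your convexity argument plus $F(1)=4\sin(m/2)\sin((U-m)/2)\sin(U/2)\geq0$ works, and it even gives existence without the matrix. It is more than needed, though. Since $U-m\in[0,\pi)$, you have $F'(\rho)=n\rho^{n-1}\sin m+\sin(U-m)>0$ for $\rho>0$. So $F$ is strictly increasing on $[0,\infty)$, and the eigenvalue modulus $\rho_k(m)\in(0,1]$ is automatically the only root. This is how the paper argues in the interior, and the same monotonicity settles $m=\gamma_k$ immediately.

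Finally, you should explicitly credit the continuity of $\rho_k$ on $(\gamma_k,\theta_k]$, including at $\theta_k$, to the continuity of $\beta_k$ from \cref{prop:graph}.
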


\begin{proof}
Write $\lambda=a+\ii b=\zeta_k(m)$, $M=\beta_k(m)$, and $U=U_k(m)$. The vector $(U,m,\ldots,m)$ is feasible and its logarithmic sum is $g_k(m,M)=0$. By \cref{prop:feasibility}, it realizes $\lambda$ with
\[
 \alpha=a-b\cot U\in[0,1),\qquad \alpha_2=\cdots=\alpha_n=0.
\]
The parameter bound follows from $m\leq U<M$ and the bijection in \cref{lem:angles}; it does not follow merely from the realness of a rational expression.

The product equation for this matrix is
\begin{equation}\label{eq:one-loop-polynomial}
 \lambda^{n-1}(\lambda-\alpha)=1-\alpha,
 \qquad\text{or}\qquad
 p_\alpha(\lambda)=\lambda^n-\alpha\lambda^{n-1}+\alpha-1=0.
\end{equation}
If $\lambda^{n-1}=1$ in this equation, then $\lambda=1$, which is impossible in $\Cp$. Solving for $\alpha$ therefore gives \eqref{eq:alpha-k}.

To obtain the radial equation, write $\rho=|\lambda|=\sin M/\sin(M-m)$. The trigonometric identity
\begin{equation}\label{eq:positive-denominator}
 \sin U-\rho\sin(U-m)
 =\frac{\sin m\sin(M-U)}{\sin(M-m)}>0
\end{equation}
gives
\begin{equation}\label{eq:exp-g}
 \ee^{g_k(m,M)}=\frac{\rho^n\sin m}{\sin U-\rho\sin(U-m)}.
\end{equation}
At $g_k=0$ this becomes
\begin{equation}\label{eq:radial-U}
 \rho^n\sin m+\rho\sin(U-m)=\sin U,
\end{equation}
which is \eqref{eq:radial-polynomial}. For $\gamma_k<m<\theta_k$, both $\sin m$ and $\sin(U-m)$ are positive, so the left-hand side is strictly increasing for $\rho\geq0$. Our realizing matrix already gives a solution with $0<\rho\leq1$, and hence the solution in $[0,1]$ is unique.

At $m=\theta_k$, one has $U=m$ and \eqref{eq:radial-U} forces $\rho=1$. At $m=\gamma_k$, it has $U=\pi$ and its only nonnegative solution is $\rho=0$. Continuity at the first endpoint follows from \cref{prop:graph}. At the second, $M\to\pi$ and $m\to\gamma_k\in(0,\pi)$ in \eqref{eq:Lambda}, so $\rho\to0$. Thus $\zeta_k$ extends continuously by $\zeta_k(\gamma_k)=0$.
\end{proof}

This also proves that the graph constructed from $g_k=0$ is exactly the graph defined in \eqref{eq:beta-definition}, so the notation in \cref{thm:region} is now justified. Its one-loop parameter satisfies
\begin{equation}\label{eq:alpha-endpoints}
 \alpha_k(\theta_k)=0,\qquad
 \lim_{m\downarrow\gamma_k}\alpha_k(m)=1.
\end{equation}
The first identity follows from $U=m$; the second follows from \eqref{eq:alpha-k} and $\zeta_k(m)\to0$. Thus $0$ is not attained by the one-loop family itself.

For completeness, the derivative in \eqref{eq:implicit-beta-derivative} has a radial form. With $\rho=\rho_k(m)$, $M=\beta_k(m)$, $U=U_k(m)$, and $V=U-m$, cotangent differences in \eqref{eq:g-M}--\eqref{eq:g-m} give
\[
 \beta'_k(m)=-\frac{(n-1)\sin V\sin M}
 {(n-1)\sin m\sin(M-U)+\sin U\sin(M-m)}.
\]
At $g_k=0$, $\sin(M-U)=\rho^{n-1}\sin M$ and $\sin(M-m)=\sin M/\rho$. Substituting these identities and then \eqref{eq:radial-U} yields, for $\gamma_k<m<\theta_k$,
\begin{equation}\label{eq:radial-beta-derivative}
 \beta'_k(m)=-\frac{(n-1)\sin(2\pi k-nm)}
 {n\rho_k(m)^{n-1}\sin m+\sin(2\pi k-nm)}<0.
\end{equation}
The sign argument itself did not require this simplification.

At the endpoint $(\theta_k,\phi_k)$, the graph $M=\beta_k(m)$ is tangent to the horizontal line $M=\phi_k$. More precisely, the one-sided endpoint derivatives are
\begin{equation}\label{eq:activation-derivatives}
 \beta_k'(\theta_k)=0,\qquad
 \beta_k''(\theta_k)=\frac{n-1}{\sin\theta_k}.
\end{equation}
To see this without differentiating an inverse function at its endpoint, put $\theta=\theta_k$ and $\phi=\phi_k$. The logarithmic function $g_k$ is real analytic near $(\theta,\phi)$, since all its sine arguments are positive there. At that point $U_k(\theta)=\theta$, and \eqref{eq:g-M}--\eqref{eq:g-m} give
\[
 \partial_Mg_k=-2n\tan(\theta/2)\ne0,
 \qquad \partial_mg_k=0,
 \qquad \partial_{mm}g_k=n(n-1)\sec^2(\theta/2).
\]
The implicit function theorem gives a local analytic extension of $\beta_k$. Differentiating $g_k(m,\beta_k(m))=0$ once and twice, and using $\beta_k'(\theta)=0$ in the second identity, yields \eqref{eq:activation-derivatives}. Hence, for $\delta\downarrow0$,
\[
 \beta_k(\theta_k-\delta)-\phi_k
 =\frac{n-1}{2\sin\theta_k}\delta^2+O(\delta^3).
\]
Inverting this expansion on the decreasing branch gives
\begin{equation}\label{eq:inverse-activation-expansion}
 h_k(M)=\theta_k-
 \sqrt{\frac{2\sin\theta_k}{n-1}}\sqrt{M-\phi_k}
 +O(M-\phi_k),\qquad M\downarrow\phi_k.
\end{equation}
Here $M$ approaches from above, and the constants may depend on $n$ and $k$. The inverse branch has a square-root change as $M$ decreases to $\phi_k$. The global function $\mu_n$ also changes branch at the levels $M=\phi_j$ for $2\leq j\leq K$, so \eqref{eq:inverse-activation-expansion} describes its behavior immediately above such a level, not across the jump from below.

\section{Horizontal sections determine the boundary}\label{sec:boundary}

The sectors in \eqref{eq:vertical-sections} overlap. Instead of testing each of their edges against every other sector, we determine the right endpoint at each fixed height $M$. \Cref{fig:horizontal} depicts this change of viewpoint.

\begin{proposition}[Ordered horizontal sections]\label{prop:horizontal}
For $\phi_k\leq M<\pi$, the $k$th sector has horizontal section
\begin{equation}\label{eq:branch-section}
 \{m:(m,M)\in\Omega_{n,k}\}=(0,h_k(M)],\qquad h_k=\beta_k^{-1}.
\end{equation}
If $k<\ell$ and both sectors are active at height $M$, then $h_k(M)<h_\ell(M)$. Consequently,
\begin{equation}\label{eq:union-section}
 I(M):=\{m:(m,M)\in\Omega_n\}
 =\begin{cases}
 \varnothing,&0<M<\phi_1,\\
 (0,\mu_n(M)],&\phi_1\leq M<\pi.
 \end{cases}
\end{equation}
\end{proposition}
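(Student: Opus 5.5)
The plan is to read the horizontal sections directly off the vertical description \eqref{eq:vertical-sections} by inverting the strictly decreasing graph $\beta_k$ from \cref{prop:graph}. The ordering claim should then follow from a crude comparison of the angles $\theta_k$ and $\gamma_\ell$, with no analysis of the arcs themselves.

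\emph{Step 1: the section of one sector.} Fix $k$ and $M\in[\phi_k,\pi)$. By \eqref{eq:vertical-sections}, a point $(m,M)$ lies in $\Omega_{n,k}$ exactly in one of two cases:
\begin{itemize}
\item $0<m\leq\gamma_k$, since then $M\geq\phi_k$ is the only requirement;
\item $\gamma_k<m\leq\theta_k$ and $\phi_k\leq M\leq\beta_k(m)$.
\end{itemize}
By \cref{prop:graph}, $\beta_k$ is a continuous strictly decreasing bijection $[\gamma_k,\theta_k]\to[\phi_k,\pi]$. Hence $h_k=\beta_k^{-1}$ is well defined on $[\phi_k,\pi)$ with values in $(\gamma_k,\theta_k]$; the value $\gamma_k$ is excluded because $M<\pi=\beta_k(\gamma_k)$. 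Strict decrease gives, for $m\in(\gamma_k,\theta_k]$,
\[
 M\leq\beta_k(m)\iff m\leq h_k(M).
\]
Thus the second case contributes $(\gamma_k,h_k(M)]$. Together with the first case, $(0,\gamma_k]$, this gives \eqref{eq:branch-section}.

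\emph{Step 2: ordering of active sections.} Let $k<\ell$ and $M\geq\phi_\ell$; then also $M\geq\phi_k$, since $\phi_k$ increases with $k$. By Step 1,
\[
 h_k(M)\leq\theta_k
 \qquad\text{and}\qquad
 h_\ell(M)>\gamma_\ell.
\]
It therefore suffices to show $\theta_k<\gamma_\ell$. Since $\ell\geq k+1$,
\[
 \gamma_\ell\geq\frac{2\pi k+\pi}{n-1},
\]
and this exceeds $2\pi k/n$ because
\[
 n(2\pi k+\pi)-(n-1)2\pi k=2\pi k+n\pi>0.
\]
I expect this to be the only potential obstacle. The whole point is that each arc stays to the right of $\gamma_\ell$ while the previous arc never passes $\theta_k$, so no analysis of crossings of the arcs is needed. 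If the inequality failed, I would instead compare $\beta_k$ and $\beta_\ell$ on their common domain using \eqref{eq:g}; here it does not fail.

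\emph{Step 3: the union.} For $M<\phi_1$, no sector is active, because $\phi_1\leq\phi_k$ for every $k$ and each section is empty below its own level by \eqref{eq:vertical-sections}. Hence $I(M)=\varnothing$. For $\phi_1\leq M<\pi$, the active indices are exactly $1,\dots,\kappa(M)$, because the levels $\phi_k$ are increasing. By Steps 1 and 2, $I(M)$ is the union of the nested intervals $(0,h_k(M)]$ over these indices. This union equals $(0,h_{\kappa(M)}(M)]=(0,\mu_n(M)]$, proving \eqref{eq:union-section}.
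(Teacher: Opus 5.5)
Your proposal is correct and follows the paper's proof essentially step for step: invert the strictly decreasing $\beta_k$ on $(\gamma_k,\theta_k]$, join the rectangular part $(0,\gamma_k]$, and order the active sections via $h_k(M)\leq\theta_k<\gamma_\ell<h_\ell(M)$. The only cosmetic difference is that you compare $\theta_k$ with $\gamma_\ell$ directly for any $\ell>k$, whereas the paper proves the adjacent separation $\theta_k<\gamma_{k+1}$ and then iterates.
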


\begin{proof}
Strict decrease of $\beta_k$ changes the inequality $M\leq\beta_k(m)$ into $m\leq h_k(M)$ when $m>\gamma_k$. Since $h_k(M)>\gamma_k$, this joins the rectangular part $0<m\leq\gamma_k$ of the sector to give \eqref{eq:branch-section}.

The branch intervals are separated:
\begin{equation}\label{eq:separation}
 \gamma_k<\theta_k<\gamma_{k+1}\qquad(k<K).
\end{equation}
Indeed,
\[
 \theta_k-\gamma_k=\frac{(n-2k)\pi}{n(n-1)}>0,
 \qquad \gamma_{k+1}-\theta_k=\frac{(n+2k)\pi}{n(n-1)}>0.
\]
Therefore, whenever adjacent sectors are active,
\[
 h_k(M)\leq\theta_k<\gamma_{k+1}<h_{k+1}(M).
\]
Iterating proves the ordering for all active indices. Taking their union gives \eqref{eq:union-section}: the largest active index always has the largest section.
\end{proof}

Equations~\eqref{eq:sector-union} and \eqref{eq:union-section} prove the nonreal membership assertion in \cref{thm:region}. Set $\phi_{K+1}=\pi$. Then
\begin{equation}\label{eq:mu-bands}
 \mu_n(M)=h_k(M)\qquad(\phi_k\leq M<\phi_{k+1}).
\end{equation}
The endpoint function is strictly decreasing on each such band and has upward jumps at $M=\phi_k$ for $2\leq k\leq K$. The boundary consists of its graph pieces and the horizontal segments supplied by those jumps.

For $1\leq k<K$, define
\begin{equation}\label{eq:junctions}
 \eta_k=h_k(\phi_{k+1}),\qquad
 p_k=\Lambda(\eta_k,\phi_{k+1})=\zeta_k(\eta_k).
\end{equation}
Because $\phi_k<\phi_{k+1}<\pi$, one has $\gamma_k<\eta_k<\theta_k$. Equivalently, $\eta_k$ is the unique solution of $\beta_k(\eta_k)=\phi_{k+1}$. In terms of the endpoint function,
\begin{equation}\label{eq:mu-jump}
 \lim_{M\uparrow\phi_{k+1}}\mu_n(M)=\eta_k,
 \qquad \mu_n(\phi_{k+1})=\theta_{k+1}.
\end{equation}

\begin{figure}[tbp]
 \centering
 \includegraphics[width=0.91\textwidth]{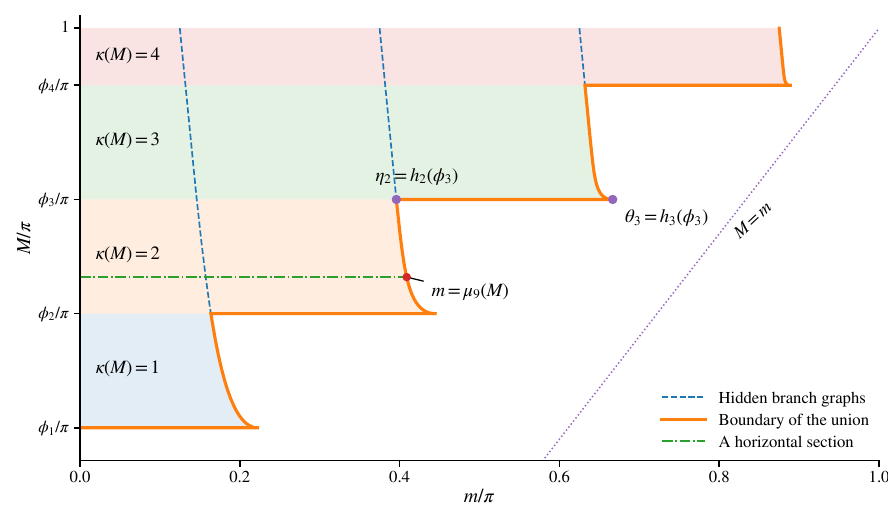}
 \caption{Horizontal sections of $\Omega_9$. In each band $\phi_k\leq M<\phi_{k+1}$, the section is $(0,\mu_9(M)]$, with $\mu_9(M)=h_k(M)$. Dashed curves show portions of full branch graphs hidden inside later sectors. The solid boundary follows one graph at a time and has a horizontal segment at each level $M=\phi_k$. At $M=\phi_3$, the boundary segment runs from $\eta_2=h_2(\phi_3)$ to $\theta_3=h_3(\phi_3)$. The sample horizontal section extends from the excluded axis $m=0$ to the endpoint $m=\mu_9(M)$. Neither $M=\pi$ nor $m=0$ belongs to $D$.}
 \label{fig:horizontal}
\end{figure}

For complex numbers $z,w$, let $[z,w]$ denote their closed straight segment. Set
\begin{align}
 C_1&=[1,\omega_1],&
 C_k&=[p_{k-1},\omega_k]&& (2\leq k\leq K),\label{eq:visible-chords}\\
 L_k&=\{\zeta_k(m):\eta_k\leq m\leq\theta_k\}&&& (k<K),\notag\\[-2pt]
 L_K&=\{\zeta_K(m):\gamma_K\leq m\leq\theta_K\}.&&&&\label{eq:visible-arcs}
\end{align}
The definition of $L_K$ uses the continuous extension $\zeta_K(\gamma_K)=0$. Put
\begin{equation}\label{eq:boundary-chains}
 B_n^+=\bigcup_{k=1}^K(C_k\cup L_k),
 \qquad B_n^-=\{\overline z:z\in B_n^+\}.
\end{equation}
Conjugation is thus kept distinct from the closure notation $\cl\Sigma_n$ used below.

\begin{theorem}[Boundary chain]\label{thm:boundary}
The nonreal upper boundary is $B_n^+\cap\Cp$. The set $B_n^+$ is a simple chain from $1$ to $0$, alternating in the order
\[
 C_1,L_1,C_2,L_2,\ldots,C_K,L_K.
\]
The initial chord runs from $1$ to $\omega_1$, and the final arc from $\omega_K$ to $0$. Each intervening pair has the orientation
\begin{equation}\label{eq:oriented-chain}
 \omega_k\xrightarrow{L_k}p_k\xrightarrow{C_{k+1}}\omega_{k+1}
 \qquad(1\leq k<K).
\end{equation}
When $K=1$, there are no intervening pairs and the chain is $1\xrightarrow{C_1}\omega_1\xrightarrow{L_1}0$.
The full topological boundary is
\begin{equation}\label{eq:full-boundary}
 \partial\Sigma_n=B_n^+\cup B_n^-\cup
 \begin{cases}
 [-1,0],&n\text{ even},\\
 \varnothing,&n\text{ odd}.
 \end{cases}
\end{equation}
Every nonreal boundary point is attained by an admissible matrix. The chords are realized by \eqref{eq:uniform-family}, and the arcs away from $0$ by the one-loop family in \cref{prop:one-loop}.
\end{theorem}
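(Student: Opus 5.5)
The plan is to work entirely in the angular coordinates and push the description of $\Omega_n$ from \cref{prop:horizontal} through the homeomorphism $\Lambda$ of \cref{lem:angles}. By \eqref{eq:sector-union} and \eqref{eq:union-section}, $\Sigma_n\cap\Cp=\Lambda(\Omega_n)$, where
\[
 \Omega_n=\{(m,M):\phi_1\leq M<\pi,\ 0<m\leq\mu_n(M)\}.
\]
Its relative boundary in $D$ consists of four parts: the bottom edge $M=\phi_1$, $0<m\leq\theta_1$; the graph pieces $m=h_k(M)$ on the bands $\phi_k\leq M<\phi_{k+1}$; and, at each jump level $M=\phi_{k+1}$, the horizontal segment $\eta_k\leq m\leq\theta_{k+1}$ produced by \eqref{eq:mu-jump}. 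Points with $m\leq\mu_n(M)$ strictly inside a band, and with $M>\phi_1$, are interior. The one subtle case is a point at a jump level with $m<\eta_k$: it is still interior, because the section just below is $(0,h_k(M)]$ with $h_k\to\eta_k$. Points with $\eta_k<m\leq\theta_{k+1}$ at that level are boundary points, since they lie outside the region just below. So the boundary in $D$ is exactly the closed staircase-type chain just listed, and it is attained, since every piece satisfies $m\leq\mu_n(M)$.

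Next I map each piece by $\Lambda$. The bottom edge and the horizontal segments lie on the lines $M=\phi_k$. By \eqref{eq:chord-parameter} these are parts of the uniform chords $[1,\omega_k]$. The segment at level $\phi_{k+1}$ for $m\in[\eta_k,\theta_{k+1}]$ maps onto $[p_k,\omega_{k+1}]=C_{k+1}$, and the bottom edge maps onto $C_1$ minus the point $1$. The graph of $h_k$ over $[\phi_k,\phi_{k+1})$ is $\{(m,\beta_k(m)):\eta_k<m\leq\theta_k\}$, which maps to $L_k$ via $\zeta_k$; for $k=K$ it runs over $(\gamma_K,\theta_K]$ and tends to $0$ as $m\downarrow\gamma_K$ by \cref{prop:one-loop}. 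The orientation \eqref{eq:oriented-chain} follows from tracing $M$ increasing: $L_k$ starts at $\omega_k=\zeta_k(\theta_k)$ and ends at $p_k$, and then $C_{k+1}$ goes from $p_k$ to $\omega_{k+1}$. Simplicity follows because $\Lambda$ is injective on $D$ and the angular chain is a simple arc, made of monotone graphs and horizontal segments meeting only at consecutive endpoints. The only added endpoints, $1$ and $0$, lie on the real axis and are hit once each.

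For the full boundary, I combine this with conjugation symmetry and the real set \eqref{eq:real-set} from \cref{thm:region}. Points of $(0,1)$ are interior, so they do not appear. The point $1$ and the point $0$ are boundary points as limits of the chains: $1$ is always attained, and $0$ belongs to the boundary by \cref{thm:region}. For even $n$, the segment $[-1,0]$ lies in $\Sigma_n$, but nearby nonreal points with $m$ close to $\pi$ must be excluded. For such points $M>m$ is close to $\pi$, and the section $(0,h_K(M)]$ has $h_K(M)\leq\theta_K<\pi$. Hence a neighbourhood of any point of $(-1,0)$ contains non-members, so $[-1,0]\subseteq\partial\Sigma_n$. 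For odd $n$, negative reals are neither in $\Sigma_n$ nor limits of it, since near a negative real $m\to\pi$ exceeds $\theta_K$; so nothing is added. Points with $|\lambda|>1$ are excluded trivially.

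Realization of nonreal boundary points comes from \cref{sec:uniform} for the chords and from \cref{prop:one-loop} for the arcs. The main obstacle I expect is the careful local analysis at the jump levels and near $0$. At the jump levels I must show that the segment from $\eta_k$ to $\theta_{k+1}$ is boundary while the part $m<\eta_k$ is interior. Near $0$ I must show that nonreal points near the origin off $B_n^\pm$ are interior or exterior as claimed; this uses continuity of $h_K$ and the limit $\beta_K(\gamma_K)=\pi$.
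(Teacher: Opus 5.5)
Your proposal is correct and is essentially the paper's own proof: you determine the relative boundary of $\Omega_n$ in $D$ band by band and at each jump level $\phi_{k+1}$ (interior for $m<\eta_k$, boundary on $[\eta_k,\theta_{k+1}]$), transport it by $\Lambda$ using \eqref{eq:chord-parameter} and \cref{prop:one-loop}, and handle the real axis with the argument bound $\Arg\lambda\leq\theta_K<\pi$, exactly as in \cref{sec:real-boundary}. Only minor tightening is needed. Interior points in a band require $m<\mu_n(M)$ strictly. Your list of boundary pieces is exhaustive only once you note, as the paper does, that $\Omega_n$ is relatively closed in $D$, so that points with $m>\mu_n(M)$ are exterior. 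The anticipated difficulty near $0$ does not arise: $\Cp$ is open, hence $\partial\Sigma_n\cap\Cp$ is just the $\Lambda$-image of the relative boundary of $\Omega_n$ in $D$.
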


\begin{proof}[Proof of the nonreal boundary assertion]
Take the boundary relative to the open angular domain $D$. Each $\Omega_{n,k}$ is relatively closed: in \eqref{eq:branch-section}, $M\geq\phi_k$ and $m\leq h_k(M)$ are closed conditions, with $h_k$ continuous for $\phi_k\leq M<\pi$. Thus $\Omega_n$, a finite union, is also relatively closed.

Below $M=\phi_1$, all points of $D$ are exterior. On an open band $\phi_k<M<\phi_{k+1}$, \eqref{eq:union-section} is $0<m\leq\mu_n(M)=h_k(M)$. By continuity, points with $m<h_k(M)$ are interior, points with $m>h_k(M)$ are exterior, and precisely the graph $m=h_k(M)$ is boundary.

At the first level $M=\phi_1$, there are no points of $\Omega_n$ below the level and the section on it is $(0,\theta_1]$. This gives the initial horizontal boundary. At a later level $M=\phi_{k+1}$, \eqref{eq:mu-jump} gives the limiting endpoints $\eta_k$ from below and $\theta_{k+1}$ on the level and from above. Points with $m<\eta_k$ are interior, since both neighboring sections extend strictly beyond them. Points with $m>\theta_{k+1}$ are exterior. For $\eta_k<m<\theta_{k+1}$, points just below the level are outside and points just above it are inside. Hence the open interval between these endpoints is boundary. Its endpoints are boundary as well, because the relative boundary is closed. These band and level cases exhaust $D$; there are no other boundary pieces.

In the original graph parameter, the visible part of the $k$th upper edge is therefore $\eta_k\leq m\leq\theta_k$ for $k<K$, and the entire edge $\gamma_K<m\leq\theta_K$ for $k=K$. The horizontal pieces map to \eqref{eq:visible-chords}, by \eqref{eq:chord-parameter}. The graph pieces map to \eqref{eq:visible-arcs}, by \eqref{eq:zeta-k}. Their missing angular endpoints have the plane limits
\[
 \lim_{m\downarrow0}\Lambda(m,\phi_1)=1,
 \qquad \lim_{m\downarrow\gamma_K}\zeta_K(m)=0.
\]
The homeomorphism $\Lambda$ preserves relative boundaries. It also shows that the pieces form a simple chain: $M$ increases along successive arcs, and each constant-$M$ piece is a single segment. The only real endpoints of this chain are $1$ and $0$. All its nonreal points lie in $\Lambda(\Omega_n)$ and hence are attained. The assertions on realizing families follow from \cref{prop:one-loop} and \eqref{eq:chord-parameter}. The real-axis part of \eqref{eq:full-boundary} is proved in \cref{sec:real-boundary}.
\end{proof}

At a fixed height, all active sections start at $m=0$ and their right endpoints are strictly ordered. There is only one outer endpoint to examine; nonadjacent branches cannot contribute additional exposed pieces.

\section{The real axis, endpoint attainment, and radial realization}\label{sec:real}

\subsection{Real eigenvalues are attained explicitly}

\begin{proposition}\label{prop:real}
Equation~\eqref{eq:real-set} holds, including $(0,1)\subset\interior\Sigma_n$.
\end{proposition}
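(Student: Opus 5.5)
We must prove that the real spectral set is $[-1,1]$ for even $n$ and $(0,1]$ for odd $n$, and that $(0,1)\subset\interior\Sigma_n$. The plan is to handle real eigenvalues directly through the determinant identity of \cref{lem:product}, and interior points through the nonreal sectors already obtained.

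\emph{Inclusion.} For the inclusion $\Sigma_n\cap\R\supseteq$ the stated set, use the uniform family \eqref{eq:uniform-family}. The eigenvalue $1$ always occurs. For even $n$, $\Un(q)$ has the eigenvalue $1-2q$ (take $j=n/2$), which sweeps $[-1,1)$ as $q$ ranges over $(0,1]$. For odd $n$ no uniform eigenvalue other than $1$ is real, so real points must come from another family. Try the one-loop matrix $\AL(\alpha)$ with polynomial $p_\alpha$ from \eqref{eq:one-loop-polynomial}. For $x\in(0,1)$, solve $p_\alpha(x)=0$ for $\alpha$:
\[
 \alpha=\frac{1-x^n}{1-x^{n-1}}.
\]
This lies in $[0,1)$ because $x^{n-1}>x^n$, which gives $\alpha<1$. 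Hence every $x\in(0,1)$ is attained for every $n$, and this also covers the positive part in the even case.

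\emph{Exclusion.} For the reverse inclusion, suppose $\lambda$ is real with $\lambda\le 0$. From \eqref{eq:normalized-product}, $\prod_j(\lambda-\alpha_j)=\prod_j(1-\alpha_j)>0$. When $\lambda\le0$, each factor $\lambda-\alpha_j$ is $\le0$. If $\lambda=0$ and some $\alpha_j=0$, the product vanishes, a contradiction. Otherwise every factor is strictly negative, so the product has sign $(-1)^n$, and for odd $n$ it is negative, another contradiction. The bound $|\lambda|\le1$ supplies the rest.

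\emph{Interior.} For $x\in(0,1)$, show a full neighbourhood lies in $\Sigma_n$. Real neighbours are covered by the inclusion step, so it suffices to show that nonreal points near $x$ belong to $\Sigma_n$. Near a point of $(0,1)$, the upper-half-plane neighbours have $m\to0^+$ and $M\to\pi^-$. By \cref{prop:horizontal}, $\mu_n(M)=h_K(M)>\gamma_K>0$ for $M$ close to $\pi$, since then $\kappa(M)=K$. So every $(m,M)$ with $m$ small and $M\ge\phi_1$ near $\pi$ lies in $\Omega_n$. The sector form \eqref{eq:vertical-sections} with $m\le\gamma_1$ makes this direct: it gives all $M\in[\phi_1,\pi)$.

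The main care is translating plane neighbourhoods into these angular conditions. As $\lambda\to x$ from above, $m=\Arg\lambda\to0$ and $M=\Arg(\lambda-1)\to\pi$. Hence for $\lambda$ close enough to $x$ we have $m<\gamma_1$ and $M>\phi_1$, so $\lambda\in\Lambda(\Omega_{n,1})\subset\Sigma_n$. Conjugation handles the lower half-plane. Combined with the real segment, this gives an open disc around $x$.
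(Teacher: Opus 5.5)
Your exclusion step, your even-dimensional inclusion via $1-2q$, and your interior argument via the rectangular part $0<m\leq\gamma_1$ of $\Omega_{n,1}$ all match the paper. The gap is in the inclusion of $(0,1)$ for odd $n$, and it is not repairable within the one-loop family.

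Your formula gives $\alpha=(1-x^n)/(1-x^{n-1})$. Since $x^n<x^{n-1}$, the numerator exceeds the denominator, so $\alpha>1$, not $\alpha<1$. For example, $n=3$ and $x=1/2$ give $\alpha=7/6$. In fact no one-loop matrix has an eigenvalue in $(0,1)$. For $\alpha\in[0,1)$ and $x\in(0,1)$ one has $p_\alpha(x)=x^{n-1}(x-\alpha)-(1-\alpha)<0$. If $x\leq\alpha$, the first term is nonpositive. If $x>\alpha$, then $x^{n-1}(x-\alpha)<x-\alpha<1-\alpha$.

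The obstruction is general. In $\prod_j(x-\alpha_j)=\prod_j(1-\alpha_j)$, every factor with $\alpha_j\leq x$ satisfies $|x-\alpha_j|<1-\alpha_j$. Hence some $\alpha_j$ must exceed $x$, and positivity of the left side forces an even number of such loops. So at least two loops are needed.

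As a result, for odd $n$ you have not shown $(0,1)\subset\Sigma_n$. Your interiority argument inherits the gap, because it needs the real diameter of the disc; its nonreal part is fine.

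The paper closes this with two equal loops. Put $c=x^{(n-2)/2}$ and $a=(1+xc)/(1+c)$, and take $A_n(a,a,0,\ldots,0)$. Then $x<a<1$, with $a-x=(1-x)/(1+c)$ and $1-a=c(1-x)/(1+c)$. Hence $(x-a)^2x^{n-2}=(1-a)^2$, which is exactly the product equation for this matrix. With this realization in place of the one-loop one, the rest of your proof goes through unchanged.
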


\begin{proof}
Every real eigenvalue belongs to $[-1,1]$. The product equation requires
\begin{equation}\label{eq:real-product}
 \prod_{j=1}^n(r-\alpha_j)=\prod_{j=1}^n(1-\alpha_j)>0.
\end{equation}
When $n$ is odd and $r<0$, the left side is negative; for $r=0$ it is nonpositive. Thus the odd-dimensional real set is contained in $(0,1]$.

To attain a prescribed $r\in(0,1)$ in any dimension $n\geq3$, put
\begin{equation}\label{eq:positive-real-realization}
 c=r^{(n-2)/2},\qquad a=\frac{1+rc}{1+c},
 \qquad A=A_n(a,a,0,\ldots,0).
\end{equation}
Then $r<a<1$ and
\[
 a-r=\frac{1-r}{1+c},\qquad 1-a=\frac{c(1-r)}{1+c}.
\]
Consequently,
\[
 (r-a)^2r^{n-2}=(1-a)^2,
\]
which is exactly \eqref{eq:real-product} for this matrix. The value $1$ is always attained.

When $n$ is even, $-1$ is an eigenvalue of $P_n$. The corresponding branch of $\Un(q)$ is $1-2q$, which runs through $[-1,1)$ as $q$ runs through $(0,1]$. Together with the Perron eigenvalue, this gives all of $[-1,1]$; a prescribed $r<1$ is obtained with $q=(1-r)/2$.

It remains to prove interiority, not just real-axis membership. Fix $r\in(0,1)$. For upper-half-plane points approaching $r$, one has $m\to0$ and $M\to\pi$. A sufficiently small neighborhood of $r$ therefore has
\[
 0<m<\gamma_1,\qquad \phi_1<M<\pi
\]
throughout its upper half. These points belong to the rectangular part of $\Omega_{n,1}$ in \eqref{eq:vertical-sections}. The lower half belongs by conjugation, and the real diameter belongs by \eqref{eq:positive-real-realization}. Hence a whole open disk about $r$ lies in $\Sigma_n$.
\end{proof}

\subsection{Completing the topological boundary}\label{sec:real-boundary}

For every upper-half-plane spectral point, \eqref{eq:vertical-sections} gives
\begin{equation}\label{eq:argument-bound}
 \Arg\lambda\leq\theta_K<\pi.
\end{equation}
Near a fixed negative real number, upper-half-plane arguments instead tend to $\pi$. Thus a sufficiently small neighborhood of that real number contains no nonreal spectral points. In even dimension, the attained interval $[-1,0)$ is therefore boundary, not interior. In odd dimension, negative real numbers do not even belong to $\cl\Sigma_n$.

The final arc approaches $0$, so $0\in\cl\Sigma_n$. Points with arguments strictly between $\theta_K$ and $\pi$ approach $0$ from outside $\Sigma_n$, by \eqref{eq:argument-bound}; hence $0\in\partial\Sigma_n$. \Cref{prop:real} shows that it is attained exactly when $n$ is even. In that case $\Un(1/2)$ realizes it; it is never attained by an admissible one-loop matrix because $p_\alpha(0)=\alpha-1\ne0$.

The point $1$ is attained and is boundary because $|\lambda|\leq1$. The interval $(0,1)$ is interior by \cref{prop:real}, and no real point outside $[-1,1]$ is a limit point. These observations, together with the nonreal boundary calculation, prove \eqref{eq:full-boundary} and complete \cref{thm:region,thm:boundary}. They also give the precise closure statement
\begin{equation}\label{eq:closure}
 \cl\Sigma_n=\Sigma_n\cup\{0\}.
\end{equation}
Indeed, the nonreal region is relatively closed by the proof of \cref{thm:boundary}, and all possible real limit points have just been classified. Thus the spectral set is closed in even dimension; in odd dimension, $0$ is its only missing limit point.

\begin{corollary}[Allowing deleted cycle edges]\label{cor:closed-domain}
For $n\geq3$, let
\[
 \widetilde\Sigma_n=\bigcup_{\alpha\in[0,1]^n}\sigma(A_n(\alpha)).
\]
Then $\widetilde\Sigma_n=\Sigma_n\cup\{0\}=\cl\Sigma_n$.
\end{corollary}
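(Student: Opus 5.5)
We prove the two inclusions $\Sigma_n\cup\{0\}\subseteq\widetilde\Sigma_n\subseteq\cl\Sigma_n$. Together with the closure identity \eqref{eq:closure}, $\cl\Sigma_n=\Sigma_n\cup\{0\}$, they give both equalities at once.

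\emph{First inclusion.} The containment $\Sigma_n\subseteq\widetilde\Sigma_n$ is immediate, since $[0,1)^n\subset[0,1]^n$. For $0$, take a parameter with some coordinate equal to $1$. By \cref{lem:product}, $\det(\lambda I-A_n(\alpha))=\prod_j(\lambda-\alpha_j)-\prod_j(1-\alpha_j)$. The determinant expansion in that lemma is purely combinatorial, so it holds verbatim for $\alpha\in[0,1]^n$. If some $\alpha_j=1$, the second product vanishes and the spectrum is exactly $\{\alpha_1,\ldots,\alpha_n\}$. Choosing $\alpha=(1,0,\ldots,0)$ therefore places $0$ in $\widetilde\Sigma_n$.

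\emph{Second inclusion.} Let $\alpha\in[0,1]^n$ and split into two cases.

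\begin{itemize}
\item If no coordinate equals $1$, then $\alpha\in[0,1)^n$ and the eigenvalues already lie in $\Sigma_n$.
\item If some $\alpha_j=1$, the spectrum is $\{\alpha_1,\ldots,\alpha_n\}\subset[0,1]$ by the factorization above. Every such value lies in $\Sigma_n\cup\{0\}$, because \eqref{eq:real-set} gives $(0,1]\subseteq\Sigma_n\cap\R$ in both parities.
\end{itemize}

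The case split is what pins down the new eigenvalues exactly. It shows that deleting an edge introduces only real values in $[0,1]$, and hence no nonreal points.

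\emph{Remarks.} Continuity of eigenvalues is an alternative route: the spectrum of $A_n(\alpha)$ for $\alpha\in[0,1]^n$ is a limit of spectra over $[0,1)^n$, which gives $\widetilde\Sigma_n\subseteq\cl\Sigma_n$ directly. The explicit factorization is sharper, however, and avoids any limiting argument. The only point needing care is that $0\in\widetilde\Sigma_n$ even for odd $n$, where $0\notin\Sigma_n$; the explicit parameter $(1,0,\ldots,0)$ settles this. In even dimension $0\in\Sigma_n$ by \cref{prop:real}, so nothing is added, as the statement indicates. No step is a serious obstacle. The main thing to check is that \eqref{eq:determinant} was derived without dividing by $\prod_j(1-\alpha_j)$; only the normalized form \eqref{eq:normalized-product} uses that division, and we do not need it here.
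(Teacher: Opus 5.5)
Your proof is correct and follows the paper's argument essentially verbatim. The two-permutation expansion of \cref{lem:product} remains valid on $[0,1]^n$, so a deleted edge leaves only the real eigenvalues $\alpha_j\in[0,1]$. The parameter $(1,0,\ldots,0)$ supplies $0$, the inclusion $(0,1]\subset\Sigma_n$ absorbs the remaining values, and \eqref{eq:closure} identifies $\Sigma_n\cup\{0\}$ with $\cl\Sigma_n$.
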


\begin{proof}
The two-permutation expansion proving \eqref{eq:determinant} remains valid for parameters in $[0,1]^n$. If some $\alpha_j=1$, the cycle product vanishes and
\[
 \det(\lambda I-A_n(\alpha))=\prod_{j=1}^n(\lambda-\alpha_j).
\]
Every eigenvalue of such a matrix is therefore real and belongs to $[0,1]$. Conversely, $0$ is attained with parameters $(1,0,\ldots,0)$, while the matrices with all parameters strictly below $1$ already give $\Sigma_n$. Since $(0,1]\subset\Sigma_n$ by \cref{prop:real}, these observations show that the extended union is exactly $\Sigma_n\cup\{0\}$. Equation~\eqref{eq:closure} identifies this set with $\cl\Sigma_n$. The normalized equation \eqref{eq:normalized-product} continues to be used only when every cycle edge is positive, as its division is unavailable at a deleted edge.
\end{proof}

\subsection{An explicit realizing matrix for every nonreal point}

The section formula does more than locate the boundary. It chooses a boundary point beyond any given nonreal spectral point on the same ray from $1$. This makes \eqref{eq:affine-realization} constructive.

\begin{corollary}[Radial reconstruction]\label{cor:reconstruction}
Let $\lambda=\Lambda(m,M)\in\Sigma_n\cap\Cp$, put $\mu=\mu_n(M)$, and define
\begin{equation}\label{eq:radial-reconstruction}
 \zeta=\Lambda(\mu,M),\qquad
 \alpha_* =\frac{\zeta^n-1}{\zeta^{n-1}-1},\qquad
 s=\frac{\sin m\sin(M-\mu)}{\sin\mu\sin(M-m)}.
\end{equation}
Then $\alpha_*\in[0,1)$, $s\in(0,1]$, and
\begin{equation}\label{eq:reconstructed-matrix}
 \lambda\in\sigma\bigl(A_n(1-s+s\alpha_*,1-s,\ldots,1-s)\bigr).
\end{equation}
The same real matrix realizes $\overline\lambda$. At $M=\phi_k$, the construction has $\zeta=\omega_k$ and $\alpha_*=0$, so it reduces to the uniform family.
\end{corollary}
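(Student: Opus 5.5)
The plan is to realize the boundary point $\zeta$ on the same ray from $1$ as $\lambda$, and then pull it back toward $1$ with the affine map \eqref{eq:affine-realization}.

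\emph{The boundary point $\zeta$.} By \cref{prop:horizontal}, membership $\lambda\in\Sigma_n$ with $M\geq\phi_1$ gives $0<m\leq\mu=\mu_n(M)$. Hence $(\mu,M)\in\Omega_n$, and $\zeta=\Lambda(\mu,M)$ is an upper-half-plane spectral point. The branch $k=\kappa(M)$ has $\mu=h_k(M)$, so $M=\beta_k(\mu)$ and $\zeta=\zeta_k(\mu)$ in the notation of \eqref{eq:zeta-k}, because $\gamma_k<\mu\leq\theta_k$. \Cref{prop:one-loop} then says that $\zeta$ is an eigenvalue of $A_n(\alpha_*,0,\ldots,0)$. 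Here $\alpha_*=\alpha_k(\mu)\in[0,1)$ is given by \eqref{eq:alpha-k}, which is the formula in \eqref{eq:radial-reconstruction}.

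\emph{The scaling factor $s$.} By \eqref{eq:ray-identity}, both $\lambda-1$ and $\zeta-1$ are positive multiples of $\ee^{\ii M}$, with moduli $\sin m/\sin(M-m)$ and $\sin\mu/\sin(M-\mu)$. So $\lambda-1=s(\zeta-1)$ with $s$ exactly the ratio in \eqref{eq:radial-reconstruction}. We have $s>0$ since all sines are positive on $D$. We have $s\leq1$ because $x\mapsto\sin x/\sin(M-x)$ is strictly increasing on $(0,M)$: its derivative is $\sin M/\sin^2(M-x)>0$. Applying \eqref{eq:affine-realization} with this $s$ sends the eigenvalue $\zeta$ to $1+s(\zeta-1)=\lambda$. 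The matrix $(1-s)I+sA_n(\alpha_*,0,\ldots,0)$ equals $A_n(1-s+s\alpha_*,1-s,\ldots,1-s)$, with all parameters in $[0,1)$. This proves \eqref{eq:reconstructed-matrix}. Since the matrix is real, its spectrum is closed under conjugation, so it also realizes $\overline\lambda$.

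\emph{The case $M=\phi_k$.} Here $\kappa(M)=k$ and $\mu=h_k(\phi_k)=\theta_k$ by \eqref{eq:beta-endpoints}. Then $\zeta=\zeta_k(\theta_k)$ has $\rho_k(\theta_k)=1$ and argument $\theta_k$, so $\zeta=\omega_k$. By \eqref{eq:alpha-endpoints}, $\alpha_*=0$, so the matrix is $\Un(s)$. Everything used has already been established; the only step needing care is confirming that $\mu$ lies in the open-left interval $(\gamma_k,\theta_k]$, so that the one-loop realization applies. This holds because $h_k$ takes values in $(\gamma_k,\theta_k]$.
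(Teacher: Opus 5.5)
Your proposal is correct and follows the paper's proof essentially step for step: you take $k=\kappa(M)$ so that $\zeta=\zeta_k(\mu)$ lies on the one-loop arc and \cref{prop:one-loop} supplies $\alpha_*\in[0,1)$, then use the ray identity and the monotonicity of $\sin x/\sin(M-x)$ to write $\lambda-1=s(\zeta-1)$ with $s\in(0,1]$, and finish by applying \eqref{eq:affine-realization} to $\AL(\alpha_*)$. You also verify the conjugate and $M=\phi_k$ claims explicitly, which the paper's proof leaves as immediate.
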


\begin{proof}
For $k=\kappa(M)$ one has $\mu=h_k(M)$, so $\zeta$ lies on the graph $M=\beta_k(\mu)$. Thus \cref{prop:one-loop} gives its admissible one-loop parameter $\alpha_*$. By \eqref{eq:ray-identity}, distance along a fixed ray is
\[
 R(m,M)=\frac{\sin m}{\sin(M-m)},\qquad
 \partial_mR(m,M)=\frac{\sin M}{\sin^2(M-m)}>0.
\]
Since membership means $0<m\leq\mu$, the ratio $s=R(m,M)/R(\mu,M)$ belongs to $(0,1]$, and $\lambda=1+s(\zeta-1)$. Applying \eqref{eq:affine-realization} to $\AL(\alpha_*)$ yields \eqref{eq:reconstructed-matrix}.
\end{proof}

Writing $b=1-s$ and $a=1-s+s\alpha_*$ in \eqref{eq:reconstructed-matrix} makes the two-parameter form explicit. Indeed,
\[
 0\leq b\leq a,\qquad
 1-a=s(1-\alpha_*)>0.
\]
Thus $0\leq b\leq a<1$, including when $s=1$ or $\alpha_*=0$. The interpolation assigns the common holding probability $b$ to every stage and adds $s\alpha_*$ at one distinguished stage. No cycle edge is lost in this construction.

Every upper-half-plane spectral point is therefore obtained from a one-loop boundary point by an admissible affine interpolation with $I$. In set form,
\begin{equation}\label{eq:radial-set}
 \begin{gathered}
 \Sigma_n\cap\Cp
 =\{1+s(\zeta(M)-1):\phi_1\leq M<\pi,\ 0<s\leq1\},\\
 \zeta(M)=\Lambda(\mu_n(M),M).
 \end{gathered}
\end{equation}
The positive real construction \eqref{eq:positive-real-realization}, the even-dimensional branch $1-2q$, and the always-present eigenvalue $1$ cover the remaining attained points.

\subsection{Realization summary}\label{sec:dictionary}

\Cref{tab:realization} collects the formulas needed to realize a prescribed spectral point. For a lower-half-plane point, apply the corresponding upper-half-plane row to its conjugate; the resulting real matrix realizes both. At a switching point $p_k$, both $\AL(\alpha_k(\eta_k))$ and the uniform family on $C_{k+1}$ realize the same eigenvalue. At $\omega_k$, both families give $P_n$. A realizing matrix therefore need not be unique at a junction.

\begin{table}[!htbp]
\centering
\caption{Realization formulas for $n\geq3$. The symbols $\Un$ and $\AL$ denote the uniform and one-loop families. In the radial row, $(m,M)$ are the coordinates of $\lambda$ and $\mu=\mu_n(M)$. Every displayed parameter is in its admissible range.}
\label{tab:realization}
\small
\renewcommand{\arraystretch}{1.35}
\begin{tabularx}{\textwidth}{@{}p{0.20\textwidth}p{0.28\textwidth}X@{}}
\toprule
Spectral point & Realizing matrix & Parameter formula \\
\midrule
$\lambda=1$ & Any $A_n(\alpha)$ & In particular, $P_n=A_n(0,\ldots,0)$. \\
\addlinespace[2pt]
$\lambda\in C_k\setminus\{1\}$ & $\Un(q)$ & $q=\dfrac{\lambda-1}{\omega_k-1}\in(0,1]$. \\
\addlinespace[2pt]
$\lambda\in L_k\setminus\{0\}$ & $\AL(\alpha)$ & $\alpha=\dfrac{\lambda^n-1}{\lambda^{n-1}-1}\in[0,1)$. \\
\addlinespace[2pt]
Any $\lambda\in\Sigma_n\cap\Cp$ & $(1-s)I+s\AL(\alpha_*)$ & $\zeta=\Lambda(\mu,M)$,\newline $\alpha_*=(\zeta^n-1)/(\zeta^{n-1}-1)$,\newline $s=|\lambda-1|/|\zeta-1|\in(0,1]$. \\
\addlinespace[2pt]
$r\in(0,1)$ & $A_n(a,a,0,\ldots,0)$ & $c=r^{(n-2)/2}$,\quad $a=\dfrac{1+rc}{1+c}$. \\
\addlinespace[2pt]
$r\in[-1,0]$, $n$ even & $\Un(q)$ & $q=(1-r)/2\in[1/2,1]$. \\
\bottomrule
\end{tabularx}
\par\smallskip
\begin{minipage}{\textwidth}\small
For even $n$, the last row includes $0$, realized by $\Un(1/2)$. For odd $n$, $0$ is a boundary point but is not an eigenvalue of any admissible matrix.
\end{minipage}
\end{table}

\section{Algebraic criteria and one-loop continuation}\label{sec:algebra}

This section relates the geometric proof to the real algebraic equation and identifies the full one-loop eigenvalue branches. These statements supplement, rather than enter into, the proof of the section formula.

\subsection{A polynomial equation for the arcs}

Define
\begin{equation}\label{eq:H}
 H_n(\lambda)=\Imag\bigl((\lambda^n-1)\overline{(\lambda^{n-1}-1)}\bigr).
\end{equation}
For $\lambda=a+\ii b$, introduce the reduced polynomial
\begin{equation}\label{eq:G}
 G_n(a,b)=\frac{H_n(a+\ii b)}{b\bigl((a-1)^2+b^2\bigr)},
\end{equation}
where the quotient is extended polynomially at its apparent singularities.

\begin{proposition}[Algebraic equation and membership criterion]\label{prop:carrier}
The extension in \eqref{eq:G} is a real polynomial of degree $2n-4$. Every full one-loop arc lies on $H_n=0$, equivalently on $G_n=0$ away from the real axis. In the finite-maximum regime of branch $k$,
\begin{equation}\label{eq:sign-equivalence}
 \begin{gathered}
 m\leq\theta_k<M,\quad U_k(m)<M\quad\Longrightarrow\\
 \bigl(g_k(m,M)\geq0\ \Longleftrightarrow\ H_n(\Lambda(m,M))\geq0\bigr).
 \end{gathered}
\end{equation}
Consequently, for an upper-half-plane point with $M\geq\phi_1$, let $k=\kappa(M)$ be the largest active branch. The point is spectral exactly when this $k$ satisfies
\begin{equation}\label{eq:algebraic-membership}
 m\leq\theta_k,\qquad M\geq\phi_k,\qquad
 \bigl(M\leq U_k(m)\ \text{or}\ H_n(\lambda)\geq0\bigr).
\end{equation}
\end{proposition}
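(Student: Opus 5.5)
The proof splits into three independent parts. The first is the algebraic structure of $H_n$ and $G_n$, which is pure polynomial bookkeeping. The second is an exact identity between $H_n$ written in polar form and the left-hand side of the radial equation \eqref{eq:radial-U}. Through \eqref{eq:exp-g}, that identity converts the sign of $H_n$ into the sign of $g_k$. The third is the membership statement, which I would deduce from \cref{cor:branchwise} and \cref{prop:horizontal} without further analysis.

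\emph{Polynomial structure.} Write $\lambda^n-1=(\lambda-1)S_n(\lambda)$ with $S_n=\sum_{j=0}^{n-1}\lambda^j$, and similarly for $S_{n-1}$. Then
\[
 H_n=|\lambda-1|^2\,\Imag\bigl(S_n\overline{S_{n-1}}\bigr).
\]
Since $S_n$ and $S_{n-1}$ have real coefficients, $\Imag(S_n\overline{S_{n-1}})$ is a real polynomial in $(a,b)$ that is odd in $b$. Hence it is divisible by $b$, and $G_n=\Imag(S_n\overline{S_{n-1}})/b$ is a polynomial. For the degree, the top homogeneous part of $S_n\overline{S_{n-1}}$ is $\lambda^{n-1}\overline\lambda^{\,n-2}=|\lambda|^{2n-4}\lambda$. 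Its imaginary part is $b(a^2+b^2)^{n-2}$, so the degree of $G_n$ is exactly $2n-4$ with nonvanishing top part. The arc assertion follows from \eqref{eq:alpha-k}: on a one-loop branch the ratio $(\lambda^n-1)/(\lambda^{n-1}-1)$ equals the real number $\alpha$, and the denominator is nonzero in $\Cp$. Therefore $(\lambda^n-1)\overline{(\lambda^{n-1}-1)}=\alpha|\lambda^{n-1}-1|^2$ is real, and $H_n=0$. The same holds for the full one-loop eigenvalue branches of $p_\alpha$ for every real $\alpha$, which covers ``full'' arcs. Off the real axis $b((a-1)^2+b^2)\ne0$, so $H_n=0$ and $G_n=0$ are equivalent there.

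\emph{Sign equivalence.} Put $\lambda=\rho\ee^{\ii m}$ and expand
\[
 (\lambda^n-1)\overline{(\lambda^{n-1}-1)}=\lambda^n\overline\lambda^{\,n-1}-\lambda^n-\overline\lambda^{\,n-1}+1 .
\]
This gives
\[
 H_n=\rho^{n-1}\bigl(\rho^n\sin m-\rho\sin(nm)+\sin((n-1)m)\bigr).
\]
With $U=U_k(m)=2\pi k-(n-1)m$ one has $\sin U=-\sin((n-1)m)$ and $\sin(U-m)=-\sin(nm)$. The bracket is therefore exactly
\[
 \rho^n\sin m+\rho\sin(U-m)-\sin U,
\]
the left-hand side of \eqref{eq:radial-U} minus its right-hand side. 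Under the hypotheses $U<M$ and $0<m<M<\pi$, the identity \eqref{eq:positive-denominator} shows that $\sin U-\rho\sin(U-m)>0$. Then \eqref{eq:exp-g} gives $g_k\geq0$ if and only if $\rho^n\sin m\geq\sin U-\rho\sin(U-m)$, that is, if and only if $H_n\geq0$, since $\rho^{n-1}>0$. This proves \eqref{eq:sign-equivalence}. The step requiring the most care is confirming that \eqref{eq:positive-denominator} and \eqref{eq:exp-g}, derived in \cref{prop:one-loop} with $g_k=0$ in view, are valid identities throughout the finite-maximum regime and not only on the graph. I expect this to hold, because they are derived from $\rho=\sin M/\sin(M-m)$ and the definition \eqref{eq:g} alone.

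\emph{Membership.} Let $k=\kappa(M)$. By \cref{prop:horizontal}, the point is spectral if and only if $m\leq h_k(M)$, that is, if and only if $(m,M)\in\Omega_{n,k}$ for this single $k$, because the active sections are nested. The derivation of \eqref{eq:sector-union} shows that $\Omega_{n,k}$ is exactly the set where \eqref{eq:branchwise} holds for this $k$. In the case $M>U_k(m)$, the hypotheses of \eqref{eq:sign-equivalence} are met, with $m\leq\theta_k<\phi_k\leq M$. So $g_k\geq0$ may be replaced by $H_n(\lambda)\geq0$, which yields \eqref{eq:algebraic-membership}.
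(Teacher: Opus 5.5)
Your proposal is correct and follows the paper's proof essentially step for step. It uses the factorization $H_n=|\lambda-1|^2\Imag(S_n\overline{S_{n-1}})$ with oddness in $b$ and top term $b(a^2+b^2)^{n-2}$, realness of $\alpha$ on the one-loop arcs, the polar identity $H_n=\rho^{n-1}[\rho^n\sin m+\rho\sin(U-m)-\sin U]$ together with the positive denominator in \eqref{eq:exp-g}, and the nesting of horizontal sections from \cref{prop:horizontal} to reduce to $k=\kappa(M)$. The point you flag is fine: \eqref{eq:positive-denominator} and \eqref{eq:exp-g} are general identities in $(m,M)$ with $\rho=\sin M/\sin(M-m)$, and $\sin(M-U)>0$ because $U_k(m)\geq\theta_k>0$ whenever $m\leq\theta_k$; this is exactly how the paper uses them.
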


\begin{proof}
Factoring $\lambda^j-1=(\lambda-1)(1+\cdots+\lambda^{j-1})$ in \eqref{eq:H} gives
\[
 H_n(\lambda)=|\lambda-1|^2\Imag\left(
 (1+\cdots+\lambda^{n-1})\overline{(1+\cdots+\lambda^{n-2})}
 \right).
\]
The imaginary part is a real polynomial in $a,b$ that changes sign under $b\mapsto-b$, so it is divisible by $b$. Its highest-degree term is $b(a^2+b^2)^{n-2}$. This proves polynomial extension and degree $2n-4$ for $G_n$.

Whenever $\lambda^{n-1}\ne1$, the equation $H_n(\lambda)=0$ is equivalent to the realness of $(\lambda^n-1)/(\lambda^{n-1}-1)$. \Cref{prop:one-loop} therefore places each one-loop arc in the zero set $H_n=0$. Its endpoint $0$ lies on $H_n=0$ by continuity.

For the sign assertion, put $\lambda=\rho\ee^{\ii m}$ and $U=U_k(m)$. In the stated regime the denominator in \eqref{eq:exp-g} is strictly positive by \eqref{eq:positive-denominator}. Hence
\[
 g_k(m,M)\geq0
 \quad\Longleftrightarrow\quad
 \rho^n\sin m+\rho\sin(U-m)-\sin U\geq0.
\]
Expanding \eqref{eq:H}, and using $U=2\pi k-(n-1)m$, gives
\begin{align*}
 H_n(\lambda)
 &=\rho^{2n-1}\sin m-\rho^n\sin(nm)+\rho^{n-1}\sin((n-1)m)\\
 &=\rho^{n-1}\bigl[\rho^n\sin m+\rho\sin(U-m)-\sin U\bigr].
\end{align*}
Since $\rho>0$, the signs agree. Substitution into \cref{cor:branchwise} gives the criterion on each argument branch. By \cref{prop:horizontal}, the section of the largest active branch contains the sections of all earlier active branches. It therefore suffices to apply the criterion to $k=\kappa(M)$, which proves \eqref{eq:algebraic-membership}.
\end{proof}

\begin{proof}[Proof of \cref{cor:direct-membership}]
The exclusion below $\phi_1$ is already part of \cref{thm:region}. For $\phi_1\leq M<\pi$, the identity $\phi_j=\pi/2+\pi j/n$ gives
\[
 \phi_j\leq M
 \quad\Longleftrightarrow\quad
 j\leq n\left(\frac{M}{\pi}-\frac12\right).
\]
The expression on the right lies in $[1,n/2)$, so its floor belongs to $\{1,\ldots,K\}$. This proves \eqref{eq:active-floor}, including the convention that branch $j$ is active at $M=\phi_j$. In \eqref{eq:algebraic-membership}, the inequality $M\geq\phi_k$ is automatic for this index. Removing that redundant condition gives \eqref{eq:direct-membership}. The sign comparison in \eqref{eq:sign-equivalence} is invoked only when $M>U_k(m)$; at $M=U_k(m)$ the unbounded-supremum alternative of \cref{prop:range} applies. Conjugation handles the lower half-plane, and \cref{prop:real} handles the real axis.
\end{proof}

In dimension four, the reduced polynomial can be written explicitly:
\begin{equation}\label{eq:four-cycle-specialization}
 \begin{split}
 G_4(a,b)
 &=(a^2+b^2)^2+2a(a^2+b^2)+3a^2-b^2\\
 &=(a^2+b^2+a)^2+2a^2-b^2.
 \end{split}
\end{equation}
This is the defining polynomial in the four-cycle description of Vagenende, Verbeken, Algaba, and Guerry~\cite{VagenendeEtAl}. The general construction retains that polynomial in dimension four and adds the branch selection needed in arbitrary dimension.

The polynomial equation alone does not describe the boundary. The realness condition must be combined with the admissibility of the parameter and the requirement that the point lie on the boundary of the union. The full one-loop arcs require $\gamma_k<m\leq\theta_k$ and $M=\beta_k(m)$; the actual boundary additionally requires the truncations in \eqref{eq:visible-arcs}. Likewise, \eqref{eq:sign-equivalence} is used only where its denominator has been proved positive. These restrictions prevent extra algebraic components from entering the spectral description.

\subsection{The full arcs are the actual one-loop branches}

\begin{proposition}\label{prop:continuation}
For $0\leq\alpha<1$, all roots of $p_\alpha$ in \eqref{eq:one-loop-polynomial} are simple. Every upper-half-plane one-loop eigenvalue lies on exactly one of the full arcs
\[
 L_k^{\mathrm{full}}=\{\zeta_k(m):\gamma_k<m\leq\theta_k\},
 \qquad 1\leq k\leq K.
\]
The branch starting at $\omega_k$ when $\alpha=0$ remains in $\Cp$, traces the full arc $L_k^{\mathrm{full}}$, and tends to $0$ as $\alpha\uparrow1$.
\end{proposition}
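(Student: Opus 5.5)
Simplicity of the roots comes first. A multiple root satisfies $p_\alpha(\lambda)=0$ and
\[
 p_\alpha'(\lambda)=\lambda^{n-2}\bigl(n\lambda-(n-1)\alpha\bigr)=0.
\]
The root $\lambda=0$ is excluded because $p_\alpha(0)=\alpha-1\ne0$. So $\lambda=(n-1)\alpha/n$ is real and lies in $[0,1)$. At that point $\lambda^{n-1}(\lambda-\alpha)\le0$ while $1-\alpha>0$, so \eqref{eq:one-loop-polynomial} fails. Hence every root is simple, and by the implicit function theorem the $n$ roots vary as analytic functions of $\alpha\in[0,1)$.

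Next I place each nonreal one-loop eigenvalue on a full arc. Let $\lambda\in\Cp$ be a root of $p_\alpha$. By \cref{prop:feasibility}, the vector $u=(\Arg(\lambda-\alpha),m,\ldots,m)$ lies in $P_k(m,M)$ for a unique winding index $k$. Its logarithmic sum is zero, and $u_1\ge m$ forces $u_1=U_k(m)$. Since $u_1<M$, we are in the finite regime $U_k(m)<M$. Then $g_k(m,M)=\Psi_M(u)=0$, and \cref{prop:graph} gives $M=\beta_k(m)$ with $\gamma_k<m\le\theta_k$. The case $m=\theta_k$ is allowed, giving $\omega_k$ when $\alpha=0$. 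Thus $\lambda=\zeta_k(m)\in L_k^{\mathrm{full}}$. The index is unique because the arcs lie over the disjoint $m$-intervals $(\gamma_k,\theta_k]$ of \eqref{eq:separation}, and $\Arg$ is injective across arcs. Conversely, \cref{prop:one-loop} shows every point of $L_k^{\mathrm{full}}$ is such an eigenvalue, with $\alpha=\alpha_k(m)$.

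Finally I follow the branch starting at $\omega_k$. For $\alpha>0$ the real roots of $p_\alpha$ do not change in number, so no branch crosses the real axis. Indeed, a root $r\in(-1,1)$ with $r\neq0$ satisfies $r^{n-1}(r-\alpha)=1-\alpha$, which is impossible for $r\in(0,1)$ and needs parity analysis for $r<0$. The simplest check is that any crossing point would be a real double root, which we excluded. Therefore the branch $\lambda(\alpha)$ with $\lambda(0)=\omega_k$ stays in $\Cp$ and lies on $\bigcup_j L_j^{\mathrm{full}}$ for all $\alpha$. These full arcs are disjoint, and each is homeomorphic to a half-open interval through $\zeta_k$. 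By continuity the branch therefore stays on $L_k^{\mathrm{full}}$, so $\lambda(\alpha)=\zeta_k(m(\alpha))$ with $m$ continuous.

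The relation $\alpha=\alpha_k(m)$ is injective in $m$, since for fixed $\alpha$ the roots are simple and $\zeta_k$ is injective. It is also continuous, with $\alpha_k(\theta_k)=0$ and $\alpha_k\to1$ as $m\downarrow\gamma_k$ by \eqref{eq:alpha-endpoints}. Hence $\alpha_k$ is a strictly monotone bijection $(\gamma_k,\theta_k]\to[0,1)$, so $m(\alpha)=\alpha_k^{-1}(\alpha)$ sweeps the whole interval and $\lambda(\alpha)\to\zeta_k(\gamma_k)=0$ as $\alpha\uparrow1$.

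The main obstacle is this last step: showing that $\alpha_k$ is injective, so that the branch traces the arc exactly once and does not leave it. I would argue it by contradiction. If two parameters $m$ on the same arc gave the same $\alpha$, then the root count would be off, since each arc already contributes one root per $\alpha$ by the surjectivity just argued. The $K$ arcs and their conjugates would then supply more than $n-1$ nonreal roots, plus the real roots $1$ and possibly one negative root, which is too many. So this step reduces to a careful count of the real roots of $p_\alpha$.
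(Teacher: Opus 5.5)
Your proof is correct in substance. Its first half is the paper's argument: simplicity via $p_\alpha'$, placing each upper root on $L_k^{\mathrm{full}}$ through $u_1=U_k(m)$ and \cref{prop:graph}, and excluding real-axis crossings because a crossing point would be a real multiple root. You can delete the unfinished remark about real roots needing a parity analysis, since the double-root argument is all that is used. Two small precisions. It is the argument sum that gives $u_1=U_k(m)$, while $m\le u_1<M<\pi$ gives $\gamma_k<m\le\theta_k$, which you need before invoking \cref{prop:graph}. Global continuation of the roots on $[0,1)$ also uses the bound $|\lambda|\le1$, not only the implicit function theorem.

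The second half takes a different route. The paper argues that there are exactly $K$ upper branches for every $\alpha$, each with a constant winding index, so there is exactly one upper root of each index. Every point of $L_k^{\mathrm{full}}$ is realized by \cref{prop:one-loop}, so it lies on the unique branch of index $k$. The limit $0$ then follows from the explicit bound $|\lambda|^{n-1}\le(n-1)(1-\alpha)$ obtained by factoring out the Perron root. You instead keep the branch on arc $k$ by continuity of $\Arg$ and the gaps $\theta_j<\gamma_{j+1}$, which is equivalent to constancy of the index. You then show that $\alpha_k$ is a monotone bijection $(\gamma_k,\theta_k]\to[0,1)$ and read off the limit from $\zeta_k(\gamma_k)=0$. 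Your version makes the one-to-one, monotone traversal of the arc explicit, and its counting is static: one root on each arc for every $\alpha$, without following branches. The paper's version additionally gives a convergence rate.

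The step that needs repair is your first reason for injectivity of $\alpha_k$. Simplicity of the roots and injectivity of $\zeta_k$ do not exclude two distinct simple roots of the same $p_\alpha$ lying on $L_k^{\mathrm{full}}$.

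Your fallback count does close this, and it is already complete without any count of negative real roots. Each $\alpha_j$ is continuous with $\alpha_j(\theta_j)=0$ and $\alpha_j\to1$ at $\gamma_j$, so every $p_\alpha$ has a root on each of the $K$ arcs. A second root on arc $k$ would then give $2K+2\ge n$ nonreal roots. Adding the Perron root $1$ brings the total above $n$, because $n\le 2K+2$ in both parities.

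In fact injectivity is not needed at all. Since $\alpha_k(m(\alpha))=\alpha$, the continuous map $m$ is injective, hence decreasing from $m(0)=\theta_k$. Its limit $c$ as $\alpha\uparrow1$ must be $\gamma_k$, because $c>\gamma_k$ would force $\alpha_k(c)=1$ by continuity. Thus the branch covers all of $(\gamma_k,\theta_k]$ and tends to $\zeta_k(\gamma_k)=0$.
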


\begin{proof}
First let $\lambda\in\Cp$ satisfy $p_\alpha(\lambda)=0$, and put $m=\Arg\lambda$, $M=\Arg(\lambda-1)$, and $u=\Arg(\lambda-\alpha)$. The normalized product equation gives
\[
 (n-1)m+u=2\pi k,\qquad (n-1)f_M(m)+f_M(u)=0
\]
for some $1\leq k\leq K$. Thus $u=U_k(m)$. Since $m\leq u<M<\pi$, one obtains $\gamma_k<m\leq\theta_k$, and the logarithmic equation is $g_k(m,M)=0$. Uniqueness in \cref{prop:graph} gives $M=\beta_k(m)$, so $\lambda\in L_k^{\mathrm{full}}$. The intervals $(\gamma_k,\theta_k]$ are disjoint by \eqref{eq:separation}, proving uniqueness of $k$. Conversely, every point of each full arc is realized by \cref{prop:one-loop}.

To control continuation, differentiate:
\[
 p'_\alpha(\lambda)=\lambda^{n-2}\bigl(n\lambda-(n-1)\alpha\bigr).
\]
A multiple root could only be $0$ or $x=(n-1)\alpha/n$. But
\[
 p_\alpha(0)=\alpha-1<0,\qquad
 p_\alpha(x)=-\frac{\alpha}{n}x^{n-1}+\alpha-1<0.
\]
Thus all roots are simple. They continue locally; boundedness by $1$ and simplicity allow continuation throughout every compact subinterval of $[0,1)$, and hence throughout $[0,1)$, without collisions. A nonreal branch cannot meet the real axis: at such a meeting it would coincide with its conjugate, contradicting simplicity. There are therefore exactly $K$ upper-half-plane branches, as there are at $\alpha=0$.

The integer $k$ in the argument equation is constant along each continued upper branch, because the individual arguments are continuous and their sum is an integer multiple of $2\pi$. At $\alpha=0$, that integer is $k$ for the root $\omega_k$. Hence there is exactly one upper root with each branch index at every $\alpha$, and the branch from $\omega_k$ stays on $L_k^{\mathrm{full}}$. Every point of that arc is realized, so it belongs to this unique continued branch. The branch therefore traces the whole arc.

Finally, factor off the Perron root:
\begin{equation}\label{eq:perron-factorization}
 p_\alpha(\lambda)=(\lambda-1)
 \left[\lambda^{n-1}+(1-\alpha)\sum_{j=0}^{n-2}\lambda^j\right].
\end{equation}
Every non-Perron root has $|\lambda|\leq1$ and consequently satisfies
\begin{equation}\label{eq:root-limit-bound}
 |\lambda|^{n-1}
 \leq(1-\alpha)\sum_{j=0}^{n-2}|\lambda|^j
 \leq(n-1)(1-\alpha).
\end{equation}
This proves convergence to $0$ and rules out convergence to the other root of the limiting polynomial.
\end{proof}

\section{Sequential-stage models and two extensions}\label{sec:models}

The cycle classification can be read in terms of the time required to pass through a sequence of stages. This interpretation also gives two exact ways to use the result beyond the original discrete-time cycle: an affine change of variables for continuous-time generators, and a quotient construction for a stochastic matrix with two parallel paths. We give the correspondences explicitly, including the restrictions under which they hold.

\subsection{Geometric holding times and the circuit equation}

Fix $A=A_n(\alpha)$ and write $q_j=1-\alpha_j\in(0,1]$. For its Markov chain $(X_t)_{t\geq0}$, a visit to stage $j$ ends at the first advance to $j+1$, with indices modulo $n$. The number $G_j$ of steps up to and including that advance has distribution
\[
 \mathbb P(G_j=t)=q_j\alpha_j^{t-1},\qquad t=1,2,\ldots.
\]
When $\alpha_j=0$, this means $G_j=1$ with probability one. The Markov property makes the successive holding times independent: once the chain enters the next stage, its subsequent transition choices have no dependence on the time already spent at earlier stages.

Starting at $X_0=1$, let $T$ be the time at which the chain first completes the circuit by traversing the edge $n\to1$. Then
\[
 T=G_1+\cdots+G_n.
\]
The same description applies between successive traversals of $n\to1$, so the circuit-completion epochs have independent, identically distributed increments. The loop parameters therefore specify a renewal model with a prescribed sequence of stages.

\begin{proposition}\label{prop:renewal}
Put
\[
 R_\alpha=
 \begin{cases}
  (\max_j\alpha_j)^{-1},&\max_j\alpha_j>0,\\
  +\infty,&\alpha_1=\cdots=\alpha_n=0.
 \end{cases}
\]
The probability generating function of $T$ is
\begin{equation}\label{eq:circuit-pgf}
 B_\alpha(z)=\mathbb E[z^T]
 =\prod_{j=1}^n\frac{q_jz}{1-\alpha_jz},\qquad |z|<R_\alpha.
\end{equation}
Its rational continuation satisfies
\begin{equation}\label{eq:renewal-determinant}
 \det(I-zA)
 =\prod_{j=1}^n(1-\alpha_jz)
  -z^n\prod_{j=1}^nq_j
 =\prod_{j=1}^n(1-\alpha_jz)\,[1-B_\alpha(z)],
\end{equation}
where the last expression is interpreted after clearing denominators at a pole. No eigenvalue of $A$ equals any $\alpha_j$. For $\lambda\ne0$ with $\lambda\ne\alpha_j$ for every $j$,
\begin{equation}\label{eq:spectral-renewal}
 \lambda\in\sigma(A)
 \quad\Longleftrightarrow\quad
 B_\alpha(\lambda^{-1})=1,
\end{equation}
using the rational continuation when $\lambda^{-1}$ lies outside the convergence disk in \eqref{eq:circuit-pgf}.
\end{proposition}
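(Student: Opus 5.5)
The proof has three parts: compute the generating function of $T$ from independence, reduce the determinant to \cref{lem:product}, and deduce the spectral equivalence by substituting $z=\lambda^{-1}$.

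\textbf{Generating function.} Since $T=G_1+\cdots+G_n$ with independent geometric summands, $\mathbb E[z^T]=\prod_j\mathbb E[z^{G_j}]$. For a single stage,
\[
 \mathbb E[z^{G_j}]=\sum_{t\geq1}q_j\alpha_j^{t-1}z^t=\frac{q_jz}{1-\alpha_jz}.
\]
The series converges absolutely when $|\alpha_jz|<1$; when $\alpha_j=0$ it reduces to the single term $z$. All factors therefore converge on $|z|<R_\alpha$, which gives \eqref{eq:circuit-pgf}.

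\textbf{Determinant identity.} For $z\ne0$,
\[
 \det(I-zA)=z^n\det(z^{-1}I-A).
\]
By \cref{lem:product} this equals
\[
 z^n\Bigl[\prod_j(z^{-1}-\alpha_j)-\prod_jq_j\Bigr]=\prod_j(1-\alpha_jz)-z^n\prod_jq_j.
\]
Both sides are polynomials in $z$, so the identity also holds at $z=0$. Where $\prod_j(1-\alpha_jz)\ne0$, factoring it out gives $\prod_j(1-\alpha_jz)\,[1-B_\alpha(z)]$, because $B_\alpha$ agrees with the rational function $\prod_jq_jz/(1-\alpha_jz)$. At a pole one reads the product after clearing denominators, as the statement allows.

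\textbf{No eigenvalue equals some $\alpha_j$.} Set $\lambda=\alpha_i$ in \eqref{eq:determinant}. The first product vanishes, so
\[
 \det(\alpha_iI-A)=-\prod_jq_j\ne0,
\]
since every $q_j>0$.

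\textbf{Spectral equivalence.} Take $\lambda\ne0$ with $\lambda\ne\alpha_j$ for all $j$, and put $z=\lambda^{-1}$. Then $\lambda\in\sigma(A)$ iff $\det(I-zA)=0$. Each factor $1-\alpha_jz=(\lambda-\alpha_j)/\lambda$ is nonzero, so by the factored form this holds iff $B_\alpha(\lambda^{-1})=1$ for the rational continuation. This is \eqref{eq:spectral-renewal}.

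There is no real obstacle here. The only care needed is at the points where $B_\alpha$ has a pole, namely $z=1/\alpha_j$, i.e.\ $\lambda=\alpha_j$. This is exactly why the statement separately proves that $\alpha_j$ is never an eigenvalue and excludes those points from the equivalence.
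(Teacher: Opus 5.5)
Your proposal is correct and follows the paper's proof essentially step for step: the product of geometric generating functions on $|z|<R_\alpha$, the substitution $\lambda=z^{-1}$ in \cref{lem:product} multiplied by $z^n$ and extended to $z=0$ by polynomiality, the evaluation $-\prod_jq_j\neq0$ at $\lambda=\alpha_j$, and cancellation of the nonzero factors $1-\alpha_j/\lambda$. The only difference is that the paper also briefly justifies the independence of $G_1,\ldots,G_n$ (a path with prescribed holding times has probability equal to the product of the geometric probabilities), which you take from the preceding setup.
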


\begin{proof}
For prescribed holding times $t_1,\ldots,t_n$, the chain makes $t_j-1$ self-loops followed by one advance at stage $j$. The probability of that path is the product of the corresponding geometric probabilities, which proves the asserted independence. For $|\alpha_jz|<1$, summing the geometric series gives
\[
 \mathbb E[z^{G_j}]
 =\sum_{t=1}^\infty q_j\alpha_j^{t-1}z^t
 =\frac{q_jz}{1-\alpha_jz}.
\]
Independence gives \eqref{eq:circuit-pgf}. For $z\ne0$, multiplying the determinant identity in \cref{lem:product}, with $\lambda=z^{-1}$, by $z^n$ gives the first equality in \eqref{eq:renewal-determinant}. Both sides are polynomials, so the equality also holds at $z=0$. Factoring the product of denominators gives the second equality wherever those denominators are nonzero, and hence as a rational identity.

At $\lambda=\alpha_j$, the characteristic polynomial in \eqref{eq:determinant} has value $-\prod_iq_i<0$, so such a point cannot be an eigenvalue. For a nonzero $\lambda$ distinct from all $\alpha_j$, none of the factors $1-\alpha_j/\lambda$ vanishes. Substituting $z=\lambda^{-1}$ into \eqref{eq:renewal-determinant} and cancelling these factors proves \eqref{eq:spectral-renewal}.
\end{proof}

The distinction between a generating function and its rational continuation matters here. Formula~\eqref{eq:circuit-pgf} represents the expectation only on its disk of convergence. Spectral points need not satisfy $|\lambda|>\max_j\alpha_j$, so the value in \eqref{eq:spectral-renewal} need not be obtainable by summing the probability series. The polynomial identity remains valid throughout the plane. It also handles a possible eigenvalue $\lambda=0$, for which the reciprocal formulation is unavailable.

The circuit time differs from the usual first positive return time
\[
 \tau_1^+=\min\{t\geq1:X_t=1\}.
\]
If the first transition is a self-loop at stage $1$, then $\tau_1^+=1$, whereas a circuit still requires every forward edge to be traversed. Conditioning on the first transition gives
\begin{equation}\label{eq:first-return-pgf}
 F_1(z):=\mathbb E_1[z^{\tau_1^+}]
 =\alpha_1z+q_1z\prod_{j=2}^n\frac{q_jz}{1-\alpha_jz},
 \qquad |z|<R_\alpha.
\end{equation}
Here $\mathbb E_1$ denotes expectation for a chain started at state $1$. The first term describes the immediate return; in the second, the first step departs from $1$ and the remaining stages must be completed. In particular, $F_1=B_\alpha$ when $\alpha_1=0$.

This first-return formulation recovers the same determinant. With
\[
 D_{-1}(z)=\prod_{j=2}^n(1-\alpha_jz),
\]
equations~\eqref{eq:renewal-determinant} and \eqref{eq:first-return-pgf} give
\[
 \det(I-zA)=D_{-1}(z)[1-F_1(z)].
\]
For $|z|<1$, decomposing a return according to its first positive return time yields
\[
 \sum_{t=0}^\infty (A^t)_{11}z^t
 =\frac{1}{1-F_1(z)}
 =\frac{D_{-1}(z)}{\det(I-zA)}.
\]
Indeed, the return probabilities satisfy $(A^0)_{11}=1$ and
$(A^t)_{11}=\sum_{s=1}^t\mathbb P_1(\tau_1^+=s)(A^{t-s})_{11}$ for $t\geq1$; multiplying by $z^t$ and summing proves the identity. All series converge absolutely on $|z|<1$.

The holding times also identify the meaning of the parameters without reference to the spectrum:
\[
 \mathbb E T=\sum_{j=1}^n\frac1{q_j},\qquad
 \operatorname{Var}(T)=\sum_{j=1}^n\frac{\alpha_j}{q_j^2}.
\]
The chain has stationary distribution
\[
 \pi_j=\frac{q_j^{-1}}{\sum_{i=1}^nq_i^{-1}},
\]
because stationarity at stage $j$ is equivalent to
$\pi_jq_j=\pi_{j-1}q_{j-1}$. Thus mean stage durations determine the long-run occupation proportions, while \eqref{eq:spectral-renewal} relates their full geometric distributions to the spectrum.

\subsection{Continuous-time cyclic generators}

Continuous-time chains with cyclic transitions also occur in stochastic models of linear biochemical reaction chains; Mukherjee, Ghosh, and De study return times for both unidirectional and bidirectional cyclic systems~\cite{MukherjeeGhoshDe}. For the unidirectional ring, the relation to our stochastic family can be written exactly.

Let $Q(\mathbf r)$ be the generator with entries
\begin{equation}\label{eq:ring-generator}
 Q(\mathbf r)_{j,j}=-r_j,\qquad
 Q(\mathbf r)_{j,j+1}=r_j,\qquad r_j>0,
\end{equation}
and all other entries zero, with indices modulo $n$. The holding time at stage $j$ is now exponential with rate $r_j$. Choose a common rate bound $c\geq\max_jr_j$. The uniformized transition matrix is
\begin{equation}\label{eq:ring-uniformization}
 P_c=I+\frac1cQ(\mathbf r)
 =A_n\left(1-\frac{r_1}{c},\ldots,1-\frac{r_n}{c}\right).
\end{equation}
At the events of a Poisson process of rate $c$, this chain advances from stage $j$ with probability $r_j/c$ and otherwise remains there. The resulting continuous-time transition matrix is
\[
 \ee^{tQ(\mathbf r)}
 =\ee^{-ct}\sum_{k=0}^\infty\frac{(ct)^k}{k!}P_c^k,
 \qquad t\geq0,
\]
as follows by expanding $\ee^{ct(P_c-I)}$. The matrix $P_c$ describes these Poisson event opportunities; observation at a fixed time interval $t$ instead gives $\ee^{tQ(\mathbf r)}$, which can have transitions between nonadjacent stages.

\begin{corollary}[The spectral region with bounded transition rates]\label{cor:generator-region}
For every $n\geq3$ and $c>0$,
\begin{equation}\label{eq:generator-region}
 \bigcup_{\mathbf r\in(0,c]^n}\sigma\bigl(Q(\mathbf r)\bigr)
 =c(\Sigma_n-1)
 :=\{c(\lambda-1):\lambda\in\Sigma_n\}.
\end{equation}
Consequently, a prescribed $\nu\in\C$ is an eigenvalue of an admissible generator with rates at most $c$ exactly when $1+\nu/c\in\Sigma_n$. A realizing matrix $A_n(\alpha)$ for $1+\nu/c$ gives realizing rates $r_j=c(1-\alpha_j)$.
\end{corollary}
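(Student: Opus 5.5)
The plan is to prove both inclusions in \eqref{eq:generator-region} through the uniformization identity \eqref{eq:ring-uniformization}. The main tool is the observation that the map $\mathbf r\mapsto\alpha=(1-r_1/c,\ldots,1-r_n/c)$ is a bijection from $(0,c]^n$ onto $[0,1)^n$. Its inverse is $r_j=c(1-\alpha_j)$. The condition $r_j\le c$ corresponds exactly to $\alpha_j\ge0$, and $r_j>0$ corresponds exactly to $\alpha_j<1$. So the admissible rate vectors are in one-to-one correspondence with the admissible parameter vectors in \eqref{eq:region-definition}. This matters because the half-open interval $[0,1)$ is the one for which $\Sigma_n$ is defined, and the closed-domain phenomenon of \cref{cor:closed-domain} never enters.

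Next I relate the spectra. By \eqref{eq:ring-uniformization}, $Q(\mathbf r)=c\,(A_n(\alpha)-I)$ for the corresponding $\alpha$. Since $c>0$, the spectral mapping for the affine polynomial $x\mapsto c(x-1)$ gives
\[
 \sigma(Q(\mathbf r))=\{c(\lambda-1):\lambda\in\sigma(A_n(\alpha))\}.
\]
This is elementary, because $\det(\nu I-Q)=c^n\det\bigl((1+\nu/c)I-A_n(\alpha)\bigr)$. Taking the union over all $\mathbf r\in(0,c]^n$ is the same as taking the union over all $\alpha\in[0,1)^n$, by the bijection. This yields \eqref{eq:generator-region}.

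The consequences then follow directly. First, $\nu$ lies in the union if and only if $\nu=c(\lambda-1)$ for some $\lambda\in\Sigma_n$, that is, if and only if $1+\nu/c\in\Sigma_n$; the map $\lambda\mapsto c(\lambda-1)$ is injective. Second, if $A_n(\alpha)$ realizes $1+\nu/c$, then the rates $r_j=c(1-\alpha_j)$ lie in $(0,c]$, and by the determinant identity above $\nu$ is an eigenvalue of $Q(\mathbf r)$.

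There is no real obstacle here. The only point needing care is to check that the endpoints of the two parameter domains match: a rate equal to $c$ corresponds to a missing self-loop, which is allowed, and a vanishing rate would correspond to $\alpha_j=1$, which is excluded on both sides.
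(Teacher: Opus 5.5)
Your proposal is correct and follows essentially the same route as the paper: the bijection $\alpha_j=1-r_j/c$ between $(0,c]^n$ and $[0,1)^n$, the identity $Q(\mathbf r)=c(A_n(\alpha)-I)$ with $\det(\nu I-Q(\mathbf r))=c^n\det\bigl((1+\nu/c)I-A_n(\alpha)\bigr)$, and taking unions over the matched parameter domains. Your check that a rate equal to $c$ corresponds to a zero self-loop, and that a vanishing rate would correspond to the excluded value $\alpha_j=1$, is the same endpoint bookkeeping the paper relies on.
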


\begin{proof}
The change of variables $\alpha_j=1-r_j/c$ is a bijection between $(0,c]^n$ and $[0,1)^n$. For corresponding parameters, \eqref{eq:ring-uniformization} gives
\[
 Q(\mathbf r)=c(A_n(\alpha)-I),\qquad
 \det(\nu I-Q(\mathbf r))
 =c^n\det\bigl((1+\nu/c)I-A_n(\alpha)\bigr).
\]
The spectra are therefore related by $\nu=c(\lambda-1)$, with algebraic multiplicities preserved. Taking the union over the two parameter domains proves \eqref{eq:generator-region}; the inverse parameter change proves the realization assertion.
\end{proof}

All membership and boundary statements of \cref{thm:region,thm:boundary} transfer under this affine map. In particular, the real eigenvalue union is
\[
 \left(\bigcup_{\mathbf r\in(0,c]^n}\sigma(Q(\mathbf r))\right)\cap\R
 =\begin{cases}
  [-2c,0],&n\text{ even},\\
  (-c,0],&n\text{ odd}.
 \end{cases}
\]
The upper bound $c$ is fixed in this assertion. The union over all positive rates is obtained by taking the further union over $c>0$.

For a right eigenvector $v$, the identities $A_nv=\lambda v$ and $Qv=\nu v$ give $A_n^tv=\lambda^tv$ at integer times and $\ee^{tQ}v=\ee^{t\nu}v$ in continuous time. This explains the relevance of the spectral regions to possible oscillatory modes. A single-eigenvalue classification does not require the chosen eigenvalue to be subdominant, and the reconstruction does not prescribe the rest of the spectrum. It therefore does not by itself determine a mixing time or guarantee that the realized mode governs relaxation.

\subsection{Prescribing decay and oscillation rates}\label{sec:minimum-rates}

A prescribed eigenvalue $\nu=-d+\ii\omega$, with $d,\omega>0$, gives an eigenvector the time factor $\ee^{-dt}\ee^{\ii\omega t}$. Thus $d$ is its exponential decay rate and $\omega$ its angular frequency. The classification determines the smallest possible common upper bound on the transition rates needed to realize this eigenvalue. For the unidirectional generator $Q(\mathbf r)$ in \eqref{eq:ring-generator}, each $r_j$ is the total exit rate from state $j$, since there is only one outgoing transition.

\Needspace{27\baselineskip}
\begin{corollary}[Minimum rate bound for a prescribed eigenvalue]\label{cor:minimum-rate}
Fix $n\geq3$ and $\nu=-d+\ii\omega$ with $d,\omega>0$. Define
\[
 c_{\min}(\nu)=\inf\left\{\max_{1\leq j\leq n}r_j:
       \mathbf r\in(0,\infty)^n,\quad
       \nu\in\sigma(Q(\mathbf r))\right\},
\]
with the value $+\infty$ if the set is empty. If $\omega>d\cot(\pi/n)$, then $c_{\min}(\nu)=+\infty$. Otherwise, put
\[
 M=\Arg\nu,\qquad \mu=\mu_n(M).
\]
Then
\begin{equation}\label{eq:minimum-rate-formula}
 c_{\min}(\nu)
 =|\nu|\frac{\sin(M-\mu)}{\sin\mu},
\end{equation}
and the infimum is attained. More precisely, define
\[
 \zeta=\Lambda(\mu,M),\qquad
 \alpha_* =\frac{\zeta^n-1}{\zeta^{n-1}-1}.
\]
The denominator is nonzero, $\alpha_*\in[0,1)$, and the rates
\begin{equation}\label{eq:minimum-rate-realization}
 \mathbf r_*=
 \bigl(c_{\min}(\nu)(1-\alpha_*),
       c_{\min}(\nu),\ldots,c_{\min}(\nu)\bigr)
\end{equation}
realize $\nu$. Consequently, an admissible unidirectional ring with every exit rate at most a fixed $c>0$ realizes $\nu$ if and only if $c\geq c_{\min}(\nu)$.
\end{corollary}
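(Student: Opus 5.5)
By \cref{cor:generator-region}, $\nu$ is an eigenvalue of some $Q(\mathbf r)$ with $\max_jr_j\le c$ exactly when $\lambda_c:=1+\nu/c\in\Sigma_n$. So $c_{\min}(\nu)$ is the infimum of those $c>0$ for which $\lambda_c\in\Sigma_n$. As $c$ ranges over $(0,\infty)$, the point $\lambda_c$ moves along the open ray from $1$ in direction $\nu$. Because $\omega>0$, this ray lies in $\Cp$, and along it $\Arg(\lambda_c-1)=\Arg\nu=M$ is fixed. Since $d>0$, we have $M\in(\pi/2,\pi)$, so $(m_c,M)\in D$ with $m_c=\Arg\lambda_c$.

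\textbf{Infeasible case.} By \cref{thm:region}, no upper-half-plane spectral point has $M<\phi_1=\pi/2+\pi/n$. Since $\cot$ is decreasing, $M<\phi_1$ is equivalent to $\cot M>\cot\phi_1$, that is, $-d/\omega>-\tan(\pi/n)$, that is, $\omega>d\cot(\pi/n)$. In this case the defining set is empty and $c_{\min}=+\infty$. Otherwise $\phi_1\le M<\pi$, so $\mu=\mu_n(M)$ is defined, and by \eqref{eq:main-membership} membership reduces to $m_c\le\mu$.

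\textbf{Feasible case.} I would parametrize the ray by distance, using \eqref{eq:ray-identity}: $|\lambda_c-1|=|\nu|/c=R(m_c,M)$ with $R(m,M)=\sin m/\sin(M-m)$. The function $R$ is strictly increasing in $m$ on $(0,M)$; this was shown in the proof of \cref{cor:reconstruction}. Hence $m_c\le\mu$ is equivalent to $|\nu|/c\le R(\mu,M)$, that is,
\[
 c\ \ge\ |\nu|\,\frac{\sin(M-\mu)}{\sin\mu}.
\]
The admissible $c$ therefore form a closed half-line, which gives \eqref{eq:minimum-rate-formula} with the infimum attained. The final sentence of the statement follows at once from this half-line description.

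\textbf{Explicit realization.} At $c=c_{\min}$ we have $\lambda_c=\Lambda(\mu,M)=\zeta$. With $k=\kappa(M)$ we have $\mu=h_k(M)$, so $\zeta$ lies on the one-loop graph $M=\beta_k(\mu)$. If $\mu=\theta_k$, then $\zeta=\omega_k$, and this case is still covered by \cref{prop:one-loop}. That proposition then shows that $\zeta^{n-1}\ne1$, since $\zeta\ne1$ lies in $\Cp$. It also gives $\alpha_*\in[0,1)$ and shows that $\zeta$ is an eigenvalue of $A_n(\alpha_*,0,\ldots,0)$. Converting through \cref{cor:generator-region} with $r_j=c(1-\alpha_j)$ yields exactly \eqref{eq:minimum-rate-realization}.

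\textbf{Main obstacle.} The only delicate point is justifying that the ray stays at the fixed height $M$, so that a single section $(0,\mu_n(M)]$ governs the whole ray. This is what makes the boundary point, rather than a chord or a later branch, the minimizer. It is handled by \cref{prop:horizontal}, whose section formula already takes the union over all active branches.
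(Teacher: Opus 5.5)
Your proposal is correct and follows the paper's proof essentially step for step. It uses the same reduction through \cref{cor:generator-region} to membership of $\lambda_c=1+\nu/c$ on a ray of fixed height $M=\Arg\nu$, the same exclusion $M<\phi_1\iff\omega>d\cot(\pi/n)$, the same strictly increasing ray distance $R(m,M)=\sin m/\sin(M-m)$ combined with the section $(0,\mu_n(M)]$ to get a closed half-line of admissible caps, and the same realization via \cref{prop:one-loop} at $\zeta=\Lambda(\mu,M)$ with $k=\kappa(M)$ and $\mu=h_k(M)\in(\gamma_k,\theta_k]$. The ``main obstacle'' is lighter than you suggest, since constancy of $M$ along the ray is immediate from $\Arg(\nu/c)=\Arg\nu$; the real input is the union section formula of \cref{prop:horizontal}, which you do invoke.
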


\begin{proof}
For any $c>0$, \cref{cor:generator-region} makes realization with rates at most $c$ equivalent to
\[
 \lambda_c=1+\frac{\nu}{c}\in\Sigma_n.
\]
All these points lie in the upper half-plane on the same ray from $1$, with
\[
 \Arg(\lambda_c-1)=M=\Arg\nu,\qquad
 |\lambda_c-1|=\frac{|\nu|}{c}.
\]
Since $d,\omega>0$, one has $\pi/2<M<\pi$ and $\omega/d=-\tan M$. The function $-\tan M$ is strictly decreasing on this interval, and its value at $\phi_1=\pi/2+\pi/n$ is $\cot(\pi/n)$. Hence $M<\phi_1$ exactly when $\omega>d\cot(\pi/n)$. In that case \cref{thm:region} excludes every $\lambda_c$.

Suppose now that $M\geq\phi_1$. A point on this ray has angular coordinates $(m,M)$ with $0<m<M$. By \eqref{eq:ray-identity}, its distance from $1$ is
\[
 R(m,M)=\frac{\sin m}{\sin(M-m)},\qquad
 \partial_mR(m,M)=\frac{\sin M}{\sin^2(M-m)}>0.
\]
The horizontal section in \cref{prop:horizontal} is $0<m\leq\mu_n(M)$. It follows that the admissible distances along the ray form exactly the interval
\[
 0<R\leq R(\mu,M)=\frac{\sin\mu}{\sin(M-\mu)}.
\]
Its upper endpoint is included. Applying this description to $\lambda_c$ gives
\[
 \lambda_c\in\Sigma_n
 \quad\Longleftrightarrow\quad
 \frac{|\nu|}{c}\leq\frac{\sin\mu}{\sin(M-\mu)}
 \quad\Longleftrightarrow\quad
 c\geq|\nu|\frac{\sin(M-\mu)}{\sin\mu}.
\]
Both sines in the final expression are positive because $0<\mu<M<\pi$. This proves the asserted lower bound and identifies the least feasible rate cap.

To obtain the stated rates, let $k=\kappa(M)$. Then $\mu=h_k(M)\in(\gamma_k,\theta_k]$ and $\beta_k(\mu)=M$. Thus $\zeta=\Lambda(\mu,M)$ lies on the one-loop arc in \cref{prop:one-loop}. That proposition supplies a parameter $\alpha_*\in[0,1)$ for which $\zeta$ is an eigenvalue of $A_n(\alpha_*,0,\ldots,0)$. Its characteristic equation is
\[
 \zeta^{n-1}(\zeta-\alpha_*)=1-\alpha_*.
\]
If $\zeta^{n-1}=1$, this equation would give $\zeta=1$, contrary to $\Imag\zeta>0$. Solving for $\alpha_*$ therefore gives the formula in the statement. Moreover, $\zeta-1$ has direction $M$ and modulus $R(\mu,M)$, so \eqref{eq:minimum-rate-formula} gives
\[
 c_{\min}(\nu)(\zeta-1)=\nu.
\]
The generator
\[
 c_{\min}(\nu)\bigl(A_n(\alpha_*,0,\ldots,0)-I\bigr)
\]
has the rates in \eqref{eq:minimum-rate-realization} and realizes $\nu$. All its rates are positive, and their maximum is $c_{\min}(\nu)$, since $0\leq\alpha_*<1$. It is therefore admissible under every cap $c\geq c_{\min}(\nu)$.
\end{proof}

The sector restriction agrees with the classical Dmitriev--Dynkin bound~\cite{DmitrievDynkin}. The minimum in \eqref{eq:minimum-rate-formula} and the rates in \eqref{eq:minimum-rate-realization} determine the additional requirement imposed by a fixed rate cap.

At a level $M=\phi_k$ with $1\leq k\leq K$, the definition of $\kappa$ selects branch $k$, so $\mu=\theta_k$ and $\zeta=\omega_k$. The displayed optimal construction then has $\alpha_*=0$ and all rates equal to
\[
 c_{\min}(\nu)=\frac{|\nu|}{2\sin(\pi k/n)}.
\]
In particular, equality in $\omega\leq d\cot(\pi/n)$ is included and is realized by equal rates. For a lower-half-plane eigenvalue, apply the result to its conjugate; the same real generator realizes both. Real targets are governed by the real-axis statement preceding this subsection, since $M=\pi$ is outside the domain of $\mu_n$. For a target $-d<0$, the feasible caps are $c\geq d/2$ when $n$ is even and $c>d$ when $n$ is odd. The latter infimum is not attained. These endpoint distinctions concern exact eigenvalue realization; the prescribed nonreal eigenvalue need not be subdominant.

For three stages, the minimum can be expressed directly in terms of $d$ and $\omega$:
\begin{equation}\label{eq:three-stage-minimum}
 c_{\min}(-d+\ii\omega)
 =\frac{2d+\sqrt{d^2-3\omega^2}}{3},
 \qquad d>0,\quad 0<\omega\leq\frac d{\sqrt3}.
\end{equation}
An independent derivation shows how the rate constraint enters. Expanding the determinant of the three-stage generator gives
\[
 \begin{split}
 \det(sI-Q(\mathbf r))
 &=(s+r_1)(s+r_2)(s+r_3)-r_1r_2r_3\\
 &=s\bigl[s^2+(r_1+r_2+r_3)s
                    +(r_1r_2+r_2r_3+r_3r_1)\bigr].
 \end{split}
\]
Because the generator is real and has a zero eigenvalue, realizing $-d+\ii\omega$ in dimension three requires its other nonzero eigenvalue to be $-d-\ii\omega$. Comparing coefficients with
$s[s^2+2ds+(d^2+\omega^2)]$ gives the necessary and sufficient conditions
\[
 r_1+r_2+r_3=2d,\qquad
 r_1r_2+r_2r_3+r_3r_1=d^2+\omega^2.
\]
The inequality
\[
 (r_1+r_2+r_3)^2
 -3(r_1r_2+r_2r_3+r_3r_1)
 =\frac12\sum_{1\leq i<j\leq3}(r_i-r_j)^2\geq0
\]
recovers the necessary condition $d^2-3\omega^2\geq0$.

Let $c=\max_jr_j$, and choose the labels so that $r_3=c$. The sum condition gives $c\geq2d/3$ and $r_1+r_2=2d-c$. Since $r_1,r_2\leq c$,
\[
 0\leq(c-r_1)(c-r_2)
 =c^2-c(2d-c)+r_1r_2,
\]
so $r_1r_2\geq2dc-2c^2$. Substitution in the second coefficient condition yields
\[
 d^2+\omega^2
 =r_1r_2+c(2d-c)
 \geq4dc-3c^2.
\]
Thus
\[
 3c^2-4dc+d^2+\omega^2\geq0.
\]
When $0<\omega<d/\sqrt3$, the smaller root of this quadratic is strictly below $2d/3$, so the bound $c\geq2d/3$ forces
\[
 c\geq\frac{2d+\sqrt{d^2-3\omega^2}}3.
\]
When $\omega=d/\sqrt3$, the two roots equal $2d/3$ and the same conclusion follows from the sum condition.

Write $c_*=(2d+\sqrt{d^2-3\omega^2})/3$. Since $\omega>0$, one has $2d/3\leq c_*<d$. The rates
\[
 (r_1,r_2,r_3)=(2d-2c_*,c_*,c_*)
\]
are therefore strictly positive, and $2d-2c_*\leq c_*$. They have sum $2d$, while
\[
 r_1r_2+r_2r_3+r_3r_1
 =4dc_*-3c_*^2=d^2+\omega^2.
\]
They realize the required pair and attain the lower bound, proving \eqref{eq:three-stage-minimum}. At $\omega=d/\sqrt3$ all three rates are $2d/3$. As $\omega\downarrow0$, the first rate tends to zero and $c_*\to d$, which is consistent with the unattained infimum for a negative real eigenvalue in odd dimension.

Consider, in particular, the exact target
\begin{equation}\label{eq:three-stage-target}
 \nu_*=-1+\frac{\ii}{2},\qquad
 c_{\min}(\nu_*)=\frac56,\qquad
 \mathbf r_* =\left(\frac13,\frac56,\frac56\right).
\end{equation}
The corresponding characteristic polynomial is
\[
 \det(sI-Q(\mathbf r_*))
 =s\left(s^2+2s+\frac54\right),
\]
so the nonzero eigenvalues are precisely $-1\pm\ii/2$.

This example separates a localization bound from exact realization under the support restriction. Besides the sector condition, uniformization and $|\lambda|\leq1$ give the necessary disk condition
\[
 \left|1+\frac{-d+\ii\omega}{c}\right|\leq1
 \quad\Longleftrightarrow\quad
 d^2+\omega^2\leq2cd.
\]
For $d=1$, $\omega=1/2$, and $c=3/4$, both necessary bounds hold:
\[
 \frac{\omega}{d}=\frac12<\cot\frac\pi3=\frac1{\sqrt3},
 \qquad
 d^2+\omega^2=\frac54<2cd=\frac32.
\]
Nevertheless, \eqref{eq:three-stage-minimum} excludes every unidirectional three-stage realization with that cap, since $3/4<5/6$.

Allowing reverse transitions changes the minimum. Let $P_3$ be the cyclic permutation matrix and choose positive forward and backward rates
\[
 a=\frac13+\frac1{2\sqrt3},\qquad
 b=\frac13-\frac1{2\sqrt3}.
\]
The bidirectional generator is
\[
 Q_{\mathrm{bi}}=a(P_3-I)+b(P_3^{\mathsf T}-I)
 =\begin{pmatrix}
 -(a+b)&a&b\\
 b&-(a+b)&a\\
 a&b&-(a+b)
 \end{pmatrix}.
\]
Its total exit rate at every state is $a+b=2/3$. On the eigenspaces of $P_3$ corresponding to $-1/2\pm\ii\sqrt3/2$, the eigenvalues of $Q_{\mathrm{bi}}$ are
\[
 -\frac32(a+b)\pm\frac{\ii\sqrt3}{2}(a-b)
 =-1\pm\frac{\ii}{2}.
\]
Thus the same pair is realized with every total exit rate equal to $2/3$.

This rate cap is optimal among all three-state Markov generators realizing the pair, even without a support restriction. Such a generator has eigenvalues $0,-1+\ii/2,-1-\ii/2$, so its trace is $-2$. Since each diagonal entry is minus the corresponding total exit rate, the three exit rates sum to $2$; their maximum is at least $2/3$. The bidirectional construction attains this lower bound. For the pair in \eqref{eq:three-stage-target}, restricting transitions to one direction therefore raises the smallest possible maximum exit rate from $2/3$ to $5/6$. The comparison uses total exit rates in both models and concerns the effect of the prescribed support.

The target in \eqref{eq:three-stage-target} also permits a direct comparison with \cite[Theorem~1]{KolchinskyOhgaIto2026}. Their column generator is $R=Q^{\mathsf T}$, and $r=\max_i(-R_{ii})$ is the maximum total exit rate. For a one-way three-cycle the affinity is infinite and $w=1$. At cap $c=3/4$, their inequalities give
\[
 \frac{|\Imag\nu_*|}{-\Real\nu_*}
 =\frac12<\cot\frac\pi3=\frac1{\sqrt3},
 \qquad
 \frac{|\Imag\nu_*|}{2c+\Real\nu_*}
 =1<\tan\frac\pi3=\sqrt3.
\]
Replacing $r\leq c$ by $c$ preserves necessity. For the same target and cap, the left side of their Theorem~3 satisfies
\[
 \tau\frac{\sqrt3}{2}+(1-\tau)\frac1{\sqrt3}<1
 \qquad(0\leq\tau\leq1).
\]
These inequalities do not exclude $\nu_*$ at cap $3/4$, while \eqref{eq:three-stage-minimum} rules out every strictly one-way three-cycle at that cap. The cited bounds include cycles with backward transitions. The exact calculation above determines realizability under the additional support restriction and supplies the rates when realization is possible.

\subsection{Two parallel paths with equal holding parameters}

A branching support can retain the complete cycle spectrum through a quotient. Fix an integer $r\geq2$ and $p\in(0,1)$. Consider the state space
\[
 \{0,a_1,\ldots,a_r,b_1,\ldots,b_r\},
\]
where $0$ is a common hub and the two routes are
\[
 \begin{array}{l}
 0\longrightarrow a_1\longrightarrow\cdots\longrightarrow a_r\longrightarrow0,\\[3pt]
 0\longrightarrow b_1\longrightarrow\cdots\longrightarrow b_r\longrightarrow0.
 \end{array}
\]
Choose $\alpha=(\alpha_0,\ldots,\alpha_r)\in[0,1)^{r+1}$ and put $q_j=1-\alpha_j$. At the hub, the chain remains at $0$ with probability $\alpha_0$, enters the $a$-path with probability $pq_0$, and enters the $b$-path with probability $(1-p)q_0$. At the two copies of stage $j$, the holding probability is the same $\alpha_j$; an advance has probability $q_j$ and stays on the chosen path until its last stage returns to $0$.

More explicitly, the only possibly nonzero entries of the transition matrix $P_{r,p}(\alpha)$ are specified by
\begin{align*}
 P_{0,0}&=\alpha_0,&
 P_{0,a_1}&=pq_0,&
 P_{0,b_1}&=(1-p)q_0,\\
 P_{a_j,a_j}&=P_{b_j,b_j}=\alpha_j&&&(1\leq j\leq r),\\
 P_{a_j,a_{j+1}}&=P_{b_j,b_{j+1}}=q_j&&&(1\leq j<r),\\
 P_{a_r,0}&=P_{b_r,0}=q_r.
\end{align*}
Zero holding probabilities are allowed; every displayed forward edge has positive probability. Equality of the holding probabilities at corresponding stages is part of the definition of this family.

\Needspace{18\baselineskip}

\begin{proposition}[The spectrum of the parallel-path construction]\label{prop:parallel-paths}
Let $r\geq2$, $p\in(0,1)$, and $\alpha\in[0,1)^{r+1}$. Then
\begin{equation}\label{eq:parallel-factorization}
 \det\bigl(\lambda I-P_{r,p}(\alpha)\bigr)
 =\left(\prod_{j=1}^r(\lambda-\alpha_j)\right)
  \det\bigl(\lambda I-A_{r+1}(\alpha_0,\ldots,\alpha_r)\bigr).
\end{equation}
Thus the spectrum, with algebraic multiplicities, consists of the spectrum of the cyclic quotient and the additional eigenvalues $\alpha_1,\ldots,\alpha_r$. For each fixed $p\in(0,1)$,
\begin{equation}\label{eq:parallel-region}
 \bigcup_{\alpha\in[0,1)^{r+1}}\sigma(P_{r,p}(\alpha))
 =\Sigma_{r+1}\cup\{0\}
 =\cl\Sigma_{r+1}.
\end{equation}
In particular, the nonreal spectral region of this branching family is exactly the nonreal region of the $(r+1)$-cycle.
\end{proposition}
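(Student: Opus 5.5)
The plan is to prove the determinant factorization \eqref{eq:parallel-factorization} by exhibiting an invariant subspace together with an invariant quotient, and then to read off the spectral union from earlier results.

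\emph{Step 1: the symmetric quotient.} Let $S$ be the subspace of vectors $f$ with $f_{a_j}=f_{b_j}$ for every $j$. Because the holding probabilities at corresponding stages agree, $S$ is invariant under $P=P_{r,p}(\alpha)$. Indeed, for $f\in S$ the rows at $a_j$ and $b_j$ give the same value $\alpha_jf_j+q_jf_{j+1}$, and the hub row gives $\alpha_0f_0+q_0f_1$, using $p+(1-p)=1$. Parametrize $S$ by $(f_0,f_1,\ldots,f_r)$. The restriction of $P$ to $S$ is then exactly $A_{r+1}(\alpha_0,\ldots,\alpha_r)$.

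\emph{Step 2: the complementary block.} On the quotient $\C^{2r+1}/S$, I use the coordinates $g_j=f_{a_j}-f_{b_j}$ for $1\le j\le r$, which exactly detect the class modulo $S$. Then
\[
(Pf)_{a_j}-(Pf)_{b_j}=\alpha_jg_j+q_jg_{j+1}\quad(j<r),
\qquad (Pf)_{a_r}-(Pf)_{b_r}=\alpha_rg_r,
\]
since both last stages return to the same hub coordinate $f_0$. The induced map on the quotient is therefore upper triangular with diagonal $\alpha_1,\ldots,\alpha_r$.

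Choosing a basis adapted to $S$ makes $P$ block upper triangular. Hence the characteristic polynomial is the product of the two block polynomials, which is \eqref{eq:parallel-factorization} with multiplicities.

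\emph{Step 3: the spectral union.} From the factorization,
\[
\bigcup_\alpha\sigma(P_{r,p}(\alpha))=\Sigma_{r+1}\cup\{\alpha_j\text{ values attained}\}.
\]
The extra eigenvalues $\alpha_j$ range over $[0,1)$. Here $r+1\ge3$, so \cref{prop:real} gives $(0,1)\subset\Sigma_{r+1}$, while $1\in\Sigma_{r+1}$ always. Taking $\alpha_1=0$ attains $0$ for every choice of the remaining parameters, so the union is exactly $\Sigma_{r+1}\cup\{0\}$. Equation~\eqref{eq:closure} identifies this set with $\cl\Sigma_{r+1}$. The union does not depend on $p$, because neither factor involves $p$. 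Since the added eigenvalues are real, the nonreal part coincides with that of $\Sigma_{r+1}$.

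The only step needing care is Step 2, namely checking that the difference coordinates are well defined on the quotient and that the hub contributions cancel in the last row. This works because $a_r$ and $b_r$ both point to $0$ with the same probability $q_r$. I do not expect any genuine obstacle.
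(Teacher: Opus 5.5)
Your proof is correct. Steps 1 and 3 match the paper's argument; the difference is in Step 2.

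The paper does not pass to a quotient. It builds an explicit $P$-invariant complement
$W=\{f: f(0)=0,\ pf(a_j)+(1-p)f(b_j)=0\ (1\leq j\leq r)\}$.
The weights are forced by the hub row, since $(Pf)(0)=q_0\bigl(pf(a_1)+(1-p)f(b_1)\bigr)$ whenever $f(0)=0$. The paper then checks $V\cap W=\{0\}$ and obtains a block-diagonal form with a triangular block on $W$.

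Your quotient by $S$, using the unweighted coordinates $g_j=f_{a_j}-f_{b_j}$, never involves the hub row. You therefore need neither the choice of weights nor the direct-sum verification, and your argument shows directly that the factorization holds for any value of the split $p$.

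What the paper's route buys is a genuine invariant subspace of difference modes vanishing at the hub. The unweighted subspace $\{f_0=0,\ f_{b_j}=-f_{a_j}\}$ is invariant only when $p=1/2$, because its hub row gives $q_0(2p-1)f_{a_1}$. This invariant subspace is what supports the paper's later remark that, for odd $r+1$, the eigenvalue $0$ is supplied by a difference mode. Your block-triangular form gives the eigenvalues with algebraic multiplicities, which is all the proposition asserts, but it does not locate the corresponding eigenvectors.
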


\begin{proof}
Let $P=P_{r,p}(\alpha)$ act on column vectors, regarded as complex-valued functions on the state space. A function which takes the same value at $a_j$ and $b_j$ cannot distinguish the two routes. The subspace
\[
 V=\{f:f(a_j)=f(b_j)\text{ for }1\leq j\leq r\}
\]
is invariant under $P$. Indeed, at either copy of stage $j<r$, the value of $Pf$ is $\alpha_jf(a_j)+q_jf(a_{j+1})$; at either last stage it is $\alpha_rf(a_r)+q_rf(0)$. At the hub, the two departure probabilities add to $q_0$. In coordinates $f(0),f(a_1),\ldots,f(a_r)$, the induced matrix on $V$ is therefore $A_{r+1}(\alpha_0,\ldots,\alpha_r)$.

The remaining modes record differences between the two routes. To identify an invariant complement, retain the departure weights and set
\[
 W=\{f:f(0)=0,\quad pf(a_j)+(1-p)f(b_j)=0
                         \text{ for }1\leq j\leq r\}.
\]
For $f\in W$, the hub coordinate of $Pf$ is
\[
 (Pf)(0)=q_0\bigl(pf(a_1)+(1-p)f(b_1)\bigr)=0.
\]
For $j<r$, the weighted sum of its two coordinates at stage $j$ is
\begin{align*}
 p(Pf)(a_j)+(1-p)(Pf)(b_j)
 ={}&\alpha_j\bigl(pf(a_j)+(1-p)f(b_j)\bigr)\\
   &+q_j\bigl(pf(a_{j+1})+(1-p)f(b_{j+1})\bigr)=0.
\end{align*}
At stage $r$, the same sum equals $\alpha_r$ times the corresponding sum for $f$, plus $q_rf(0)$, and also vanishes. Hence $W$ is invariant.

Since $p\in(0,1)$, a function in $W$ is determined by $f(a_1),\ldots,f(a_r)$, with $f(b_j)=-p f(a_j)/(1-p)$. In these coordinates, the restriction of $P$ to $W$ is the operator $T$ given by
\[
 (Tx)_j=\alpha_jx_j+q_jx_{j+1}\quad(1\leq j<r),
 \qquad (Tx)_r=\alpha_rx_r.
\]
The matrix of $T$ is upper triangular with diagonal $\alpha_1,\ldots,\alpha_r$, so its characteristic polynomial is $\prod_{j=1}^r(\lambda-\alpha_j)$. The dimensions of $V$ and $W$ are $r+1$ and $r$. Their intersection is zero: if $f(a_j)=f(b_j)$ and their weighted sum is zero, both values are zero, and $f(0)=0$ in $W$. Thus the whole function space is the direct sum $V\oplus W$. Taking the product of the characteristic polynomials of the two restrictions proves \eqref{eq:parallel-factorization}.

The factorization shows that every eigenvalue of the cyclic quotient occurs in the branching matrix, and that every extra eigenvalue is in $[0,1)$. Conversely, as $\alpha$ varies, the quotient ranges over the complete admissible $(r+1)$-cycle family, and each $\alpha_j$ ranges over $[0,1)$. The union on the left of \eqref{eq:parallel-region} is therefore $\Sigma_{r+1}\cup[0,1)$. By \cref{thm:region}, $(0,1]\subset\Sigma_{r+1}$, and \eqref{eq:closure} gives $\Sigma_{r+1}\cup[0,1)=\Sigma_{r+1}\cup\{0\}=\cl\Sigma_{r+1}$.
\end{proof}

The quotient is obtained by identifying $a_j$ and $b_j$ for each $j$. The proof also accounts for the eigenvalues lost under this identification: they are real because the restriction to $W$ is triangular. Consequently, the membership tests, nonreal boundary, and realizing matrices already proved for the cycle apply to this family without another boundary calculation. In particular, a cycle realization may be lifted by duplicating its non-hub stages and choosing any fixed $p\in(0,1)$. When $r+1$ is odd, zero is supplied by a difference mode with some $\alpha_j=0$, even though zero cannot occur in the admissible cyclic quotient.

\subsection{What changes when the two paths differ}\label{sec:unequal-paths}

The equal-holding hypothesis makes the quotient calculation exact. If it is removed, the support still permits a useful determinant reduction, but it no longer yields the same spectral region automatically. This can be seen before any angular analysis.

Allow two paths of lengths $r_1,r_2\geq1$, with holding probabilities $\alpha_{i,j}\in[0,1)$ on stage $j$ of path $i\in\{1,2\}$. Write $q_{i,j}=1-\alpha_{i,j}$, retain the hub holding probability $\alpha_0\in[0,1)$, and set $q_0=1-\alpha_0$. Departures from the hub choose the two paths with probabilities $p$ and $1-p$, where $p\in(0,1)$. Define
\[
 D_i(\lambda)=\prod_{j=1}^{r_i}(\lambda-\alpha_{i,j}),\qquad
 Q_i=\prod_{j=1}^{r_i}q_{i,j}.
\]
For the resulting transition matrix $P$, one has
\begin{equation}\label{eq:unequal-path-determinant}
 \begin{split}
 \det(\lambda I-P)
 ={}&(\lambda-\alpha_0)D_1(\lambda)D_2(\lambda)\\
    &-q_0\bigl[pQ_1D_2(\lambda)+(1-p)Q_2D_1(\lambda)\bigr].
 \end{split}
\end{equation}

To verify the formula, order the hub first and each path consecutively. Let $T_i$ be the upper bidiagonal transition block within path $i$, with diagonal $\alpha_{i,j}$ and superdiagonal $q_{i,j}$; the transition from its last stage to the hub is kept outside this block. For $D_i(\lambda)\ne0$, the transfer through the path is
\[
 e_1^{\mathsf T}(\lambda I-T_i)^{-1}q_{i,r_i}e_{r_i}
 =\frac{Q_i}{D_i(\lambda)}.
\]
Here $e_j$ is the appropriate coordinate vector. The equality follows by solving the triangular system backwards from its last coordinate: each stage contributes the factor $q_{i,j}/(\lambda-\alpha_{i,j})$. Taking the Schur complement of the two path blocks gives
\[
 \det(\lambda I-P)
 =D_1(\lambda)D_2(\lambda)
  \left(\lambda-\alpha_0
        -q_0p\frac{Q_1}{D_1(\lambda)}
        -q_0(1-p)\frac{Q_2}{D_2(\lambda)}\right).
\]
Clearing denominators proves \eqref{eq:unequal-path-determinant} off the finitely many zeros of $D_1D_2$. Both sides of the cleared identity are polynomials, so it holds at those zeros as well.

The probabilistic expression explains the additional determinant term. A circuit consists of the geometric holding time at the hub followed by the stages on the chosen path. Its generating function is
\[
 B(z)=\frac{q_0z}{1-\alpha_0z}
 \left[
  p\prod_{j=1}^{r_1}\frac{q_{1,j}z}{1-\alpha_{1,j}z}
  +(1-p)\prod_{j=1}^{r_2}\frac{q_{2,j}z}{1-\alpha_{2,j}z}
 \right],
\]
initially for $|z|$ below the reciprocal of the largest holding probability, with the same infinite-radius convention as before. The two possible routes produce a sum of products. When $r_1=r_2=r$ and $\alpha_{1,j}=\alpha_{2,j}=\alpha_j$, they coincide; then $D_1=D_2=D$ and $Q_1=Q_2$, and \eqref{eq:unequal-path-determinant} reduces to \eqref{eq:parallel-factorization}.

For unequal paths, the exact determinant reduction and the separate geometric holding factors remain available. Taking the argument and modulus of their sum, however, does not give the single fixed-sum constraint and separable logarithmic sum used in \cref{prop:feasibility,prop:range}. The convex range argument and the ordering of horizontal sections therefore require further justification for this larger family. Moreover, a rational equation obtained by dividing by $D_1D_2$ must be checked against the cleared polynomial at its poles: \eqref{eq:parallel-factorization} shows explicitly how eigenvalues can remain in those factors. The parallel-path result consequently applies to the stated family with equal holding parameters, while \eqref{eq:unequal-path-determinant} specifies the algebraic problem for the unrestricted branching parameters.

\section{Conclusion}\label{sec:conclusion}

The cycle support reduces the eigenvalue equation to one product identity. In angular coordinates, that identity becomes a fixed-sum constraint together with a strictly convex logarithmic modulus sum. The equal-angle vector and the simplex vertices give the two extremal matrix families. Their branch graphs are decreasing, and the separation $\theta_k<\gamma_{k+1}$ orders all active horizontal sections. The single endpoint function $\mu_n$ then yields the complete region, its boundary, and realizing matrices without an additional global spectral theorem.

The algebraic equation is useful for evaluation, but the angular inequalities supply the component selection and parameter admissibility that the polynomial equation alone does not contain. The same distinction is important at the real endpoints: a limiting one-loop parameter identifies $0$ as a boundary point, whereas its actual attainment is a separate parity question.

For continuous-time one-way cycles, the same boundary answers an extremal question about the rates. Fixing a nonreal eigenvalue fixes a ray from $1$ under uniformization; its furthest admissible point determines the minimum rate bound in \cref{cor:minimum-rate}. The boundary realization attains that bound with at most one slower stage. The three-stage comparison in \cref{sec:minimum-rates} shows the effect of the support restriction explicitly: the same decay and oscillation rates require different minimum exit-rate bounds when backward transitions are allowed.

The parallel-path construction in \cref{prop:parallel-paths} applies the classification to a branching support through invariant subspaces. With matching holding probabilities, the additional modes are real and the cyclic quotient supplies the entire nonreal spectrum. The comparison with unequal paths in \cref{sec:unequal-paths} shows why a sparse determinant alone does not yield the same convex range problem: several return paths introduce a sum of products. The product reduction, complete range, and section ordering are therefore separate properties to establish when adapting the argument to another support.

\section*{Acknowledgements}
Brecht Verbeken thanks his colleague and friend Brando Vagenende, first author of the earlier $4$-cycle paper~\cite{VagenendeEtAl}. That work is the motivating special case and conceptual precursor of the present classification. The all-dimensional argument was developed independently, but is heavily indebted to the earlier result, which supplied the starting point for the general geometry.

\paragraph{Funding.}
Vincent Ginis acknowledges support from the Research Foundation--Flanders (FWO) under grants No.~G032822N and G0K9322N.

\paragraph{Competing interests.}
The authors declare that they have no competing interests.

\paragraph{Data availability.}
This work is theoretical and uses no empirical data. The figures illustrate the geometric sets defined in the statements.

\paragraph{Declaration of generative AI and AI-assisted technologies.}
ChatGPT 5.5 (OpenAI) was used to assist with proof organization and exposition, LaTeX preparation, and reproducible plotting code. ChatGPT 6 Astra (OpenAI) was used for rewriting. The authors reviewed the mathematical arguments and take responsibility for the content of the manuscript.

\end{document}